\pgfplotsset{compat=newest}
\definecolor{darkgreen}{rgb}{0, .5, 0}
\def\cl@chapter{}
\crefname{equation}{}{}
\crefname{table}{Table}{Tables}
\crefname{figure}{Figure}{Figures}
\crefname{section}{Section}{Sections}
\crefname{appendix}{Appendix}{Appendix}
\crefname{subsection}{Section}{Sections}
\crefname{subsubsection}{Section}{Sections}
\DeclareMathOperator*{\argmin}{arg\,min}
\DeclareMathOperator{\proj}{proj}
\DeclareMathOperator{\conv}{conv}
\DeclareMathOperator{\ext}{ext}
\newcommand{\bbN}{\mathbb{N}}
\newcommand{\bbR}{\mathbb{R}}
\newcommand{\Z}{\mathbb{Z}}
\newcommand{\lrp}[1]{\left(#1\right)}
\newcommand{\dsum}{\sum}
\newcommand{\R}{\mathbb{R}}
\newcommand{\intsto}[1]{\llbracket #1 \rrbracket}
\newcommand{\even}{\text{even}}
\newcommand{\CPLEX}{\texttt{CPLEX}}
\newcommand{\NN}{\texttt{NN}}
\newcommand{\HL}{\texttt{CDA}}
\newcommand{\CDA}{\texttt{CDA}}
\newcommand{\GUROBI}{\texttt{GRB}}
\newcommand{\GUROBISHIFT}{\texttt{GRB-S}}
\newcommand{\BARON}{\texttt{BRN}}
\newcommand{\BARONSHIFT}{\texttt{BRN-S}}
\newcommand{\NMDT}{\texttt{NMDT}}
\newcommand{\TNMDT}{\texttt{T-NMDT}}
\newcommand{\brackets}[1]{\llbracket #1 \rrbracket}
\renewcommand{\L}{L}
\newcommand{\tred}[1]{\texttt{\textbf{#1}}}
\newcommand{\defeq}{\vcentcolon=}
\newcommand{\eqdef}{=\vcentcolon}
\newcommand \lrbr[1]{\llbracket#1\rrbracket}
\newcommand{\jnoteo}[1]{}
\newcommand{\bnoteo}[1]{}
\newcommand{\rnoteo}[1]{}
\newcommand\colhlight[1]{\tikz[overlay, remember picture,baseline=-\the\dimexpr\fontdimen22\textfont2\relax]\node[rectangle,fill=blue!50,fill opacity = 0.5,text opacity =1] {$#1$};} 
\newcommand\colhlighty[1]{\tikz[overlay, remember picture,baseline=-\the\dimexpr\fontdimen22\textfont2\relax]\node[rectangle,fill=yellow,fill opacity = 0.5,text opacity =1] {$#1$};} 
\newcommand{\floor}[1]{\left\lfloor #1 \right\rfloor}
\newcommand{\IP}{{\text{\fontsize{5}{5}\selectfont IP}}}
\newcommand{\LP}{{\text{\fontsize{5}{5}\selectfont LP}}}
\newcommand{\IPtiny}{{\text{\normalfont \fontsize{6}{6}\selectfont IP}}}
\newcommand{\LPtiny}{{\text{\normalfont \fontsize{6}{6}\selectfont LP}}}
\newcommand{\PLP}{P^{\LPtiny}} 
\newcommand{\PIP}{P^{\IPtiny}} 
\newcommand{\QLP}{Q^{\LPtiny}} 
\newcommand{\QIP}{Q^{\IPtiny}} 
\newcommand{\BHH}[1]{\texttt{NN-R#1}}
\newcommand\hlight[1]{\tikz[overlay, remember picture,baseline=-\the\dimexpr\fontdimen22\textfont2\relax]\node[rectangle,fill=blue!50,rounded corners,fill opacity = 0.2,draw,thick,text opacity =1] {$#1$};} 
\newcommand\hlighty[1]{\tikz[overlay, remember picture,baseline=-\the\dimexpr\fontdimen22\textfont2\relax]\node[rectangle,fill=yellow!50,rounded corners,fill opacity = 0.2,draw,thick,text opacity =1] {$#1$};}
\DeclareMathOperator    \argmax        {arg\,max}
\renewcommand{\tred}[1]{\textcolor{red}{#1}}
\newcommand{\alert}[1]{\tred{#1}}
\journalname{Journal of Global Optimization}
\crefname{hypothesis}{Hypothesis}{Hypotheses}
\crefname{observation}{Observation}{Observation}
\title{Compact mixed-integer programming formulations in quadratic optimization \thanks{This work was supported by AFOSR (grant FA9550-21-0107) and ONR (Grant N00014-20-1-2156). Any opinions, findings, and conclusions or recommendations expressed in this material are those of the authors and do not necessarily reflect the views of the Office of Naval Research or the Air Force Office of Scientific Research.}
}
\date{Received: date / Accepted: date}
\author{Benjamin Beach
\and Robert Hildebrand
\and Joey Huchette}
\institute{Benjamin Beach 
           \and
           Robert Hildebrand \at
              Grado Department of Industrial and Systems Engineering, Virginia Tech\\ 
              \email{\{bben6,rhil\}@vt.edu}
           \and Joey Huchette \at Department of Computational and Applied Mathematics, Rice University \\
           \email{joehuchette@rice.edu}
}
\begin{document}

\maketitle

\begin{abstract}
We present a technique for producing valid dual bounds for nonconvex quadratic optimization problems. The approach leverages an elegant piecewise linear approximation for univariate quadratic functions due to Yarotsky ~\cite{Yarotsky-2016}, formulating this (simple) approximation using mixed-integer programming (MIP). Notably, the number of constraints, binary variables, and auxiliary continuous variables used in this formulation grows logarithmically in the approximation error. Combining this with a diagonal perturbation technique to convert a nonseparable quadratic function into a separable one, we present a mixed-integer convex quadratic relaxation for nonconvex quadratic optimization problems. We study the strength (or \emph{sharpness}) of our formulation and the tightness of its approximation.  Further, we show that our formulation represents feasible points via a Gray code. We close with computational results on problems with quadratic objectives and/or constraints, showing that our proposed method i) across the board outperforms existing MIP relaxations from the literature, and ii) on hard instances produces better bounds than exact solvers within a fixed time budget.
\end{abstract}

\keywords{Quadratic optimization \and Nonconvex optimization \and Mixed-integer programming \and Gray Code}

\section{Introduction}

We are interested in methods to solve optimization problems with quadratic objectives and/or constraints. Consider the following generic problem with a quadratic objective:
\begin{equation} \label{eqn:generic-problem}
    \min_{x \in X}\quad h(x) \defeq x' Q x + c \cdot x,
\end{equation}
where $X \subseteq \R^n$ is some nonempty feasible region described by side constraints. When the quadratic objective matrix $Q$ is not positive semidefinite, this is a difficult nonconvex optimization problem. We will focus on techniques to (approximately) reformulate nonconvex quadratic functions like the objective of \eqref{eqn:generic-problem}.

Quadratic optimization problems naturally arise in a number of important applications across science and engineering (see~\cite{Furini:2019,Hao:1982} and references therein). In the presence of nonconvexity, such problems are in general very difficult to solve from both a practical and theoretical perspective~\cite{Pardalos:1991}. As a result, there has been a steady stream of research developing new algorithmic techniques to solve quadratic optimization problems, and variants thereof~(see \cite{Burer:2012a} for a survey).

Our approach to approximately solving problems of the form \eqref{eqn:generic-problem} will be to reformulate the objective of \eqref{eqn:generic-problem} using mixed-integer programming (MIP). Given some diagonal matrix $D$, we can equivalently write \cref{eqn:generic-problem} as
\begin{subequations} \label{eqn:generic-problem-D}
\begin{align}
    \min_{x \in X}\quad& h^D(x,y) \coloneqq x'(Q + D)x + c \cdot x - D  y \label{eqn:generic_quad_obj-D}\\
        \text{s.t.} \quad & y_i = x_i^2 \quad\quad  i \in \intsto{n}, \label{eqn:generic_quad_D_eqns}
\end{align}
\end{subequations}
where $\llbracket n \rrbracket \coloneqq \{1,\ldots,n\}$. If $D$ is chosen such that $Q + D$ is positive semidefinite, the quadratic objective will be convex, meaning that all the nonconvexity of this problem has been isolated in the univariate quadratic equations $y_i = x_i^2$. This technique is sometimes called ``diagonal perturbation''~\cite{Dong:2018}.

In this work, we present a compact, tight MIP formulation for the graph of a univariate quadratic term: $\Set{ (x,y) | l \leq x \leq u, \: y = x^2 }$. We derive our formulation by adapting an elegant result of Yarotsky~\cite{Yarotsky-2016}, who shows that there exists a simple neural network function that approximates $y = x^2$ exponentially well (in terms of the size of the network) over the unit interval. The resulting neural network can be interpreted as a function $F_\L : \bbR \to \bbR$ that is build compositionally from a number of simple piecewise linear functions. There is a long and rich strain of research on MIP formulations for piecewise linear functions that serve as approximations for more complex nonlinear functions~\cite{Croxton:2003,Dantzig:1960,Huchette:2017,Lee:2001,Magnanti:2004,Padberg:2000,Vielma:2010}, with recent work focusing particularly on modeling neural networks~\cite{Anderson:2019,Bunel:2019,Serra:2018a,Serra:2018,Tjeng:2017,Huchette-2019}.

We show that this approximation for univariate quadratic terms leads to a \emph{relaxation} for optimization problems with quadratic objectives and/or constraints, meaning that it provides valid dual bounds for the true quadratic problem. We will show that our proposed formulation is \emph{sharp}, meaning that its LP relaxation projects to the convex hull of all feasible points. Further, we show that the formulation is in fact \emph{hereditarily sharp}, meaning that this sharpness property holds throughout the branch and bound tree. The key to reaching this result is connecting the binary reformulation to the \emph{reflected Gray code}, a well-studied binary sequence in electrical engineering.

\subsection{Literature review}
Our approach hews most closely to that of Dong and Luo~\cite{Dong-Luo-2018} and Saxena et al.~\cite{Saxena:2008}. The diagonal perturbation approach we follow have been applied throughout the years in a number of settings; for example, nonconvex quadratic optimization (with or without integer variables) \cite{Billionnet2012,Billionnet2016,Elloumi2019,Galli2014,sven-MIQCQP}, more general nonlinear~\cite{Frangioni:2006,Frangioni:2007} optimization with binary variables, and general nonlinear optimization~\cite{Adjiman:1998a,Adjiman:1998,Androulakis:1995}.

A string of recent work on optimization methods for nonconvex quadratic problems has focused on methods for relaxing bilinear terms using piecewise McCormick envelopes \cite{CastilloCastillo2018,Castro2015c,Castro2015-Chem,Misener2012,Nagarajan:2019,Castro2021}. These piecewise envelopes can be formulated using mixed-integer programming in multiple ways, typically resulting in either a linear- or logarithmic-sized MIP formulation. Moreover, this piecewise relaxation can be refined dynamically to produce a tighter relaxation in a region of interest without resulting in an unduly large MIP formulation~\cite{CastilloCastillo2018,Nagarajan:2019}. In a similar vein, a paper of Galli and Letchford~\cite{Galli2018} presents a binarization heuristic for ``box QP'' problems, leveraging a structural result of Hansen et al.~\cite{hansen}, and compares classical convexification techniques~\cite{Fortet1960,Glover1975,Hammer1970} within the heuristic.

An interesting recent paper of Xia et al.~\cite{Xia2020} reformulates optimization problems with quadratic objectives and linear constraints into MIPs via the KKT conditions. The approach outperforms commercial solvers on certain classes of instances; however, it does not seem to perform favorably on boxQP problems, and in general requires the careful computation of ``big-$M$'' coefficients which may lead to loose LP relaxations.

\subsection{Outline} In \cref{sec:MIP-Formulation} we describe our MIP approximation for $y=x^2$. In \cref{sec:gcodes} we prove some properties of Gray codes that will be useful for proving the results in \cref{sec:fstrength}. In \cref{sec:fstrength}, we show that our formulation is strong (i.e. sharp), and establish the connection between our MIP approximation and the reflected Gray code. In \cref{sec:chull}, we show how to derive some facets of the full convex hull of our MIP approximation, with connections to the parity polytope. In \cref{sec:area-section}, we present a relaxation version of our MIP approximation, derive the total area of the relaxation, and compare against the relaxation of Dong and Luo~\cite{Dong-Luo-2018}. Finally, in \cref{sec:computations}, we numerically compare our relaxation with other competing methods, including other relaxations such as CDA~\cite{Dong-Luo-2018} and NMDT~\cite{Castro2015c}, as well as state-of-the-art solvers with quadratic support like Gurobi, CPLEX, and BARON.

\section{A piecewise-linear approximation for univariate quadratic terms}
\label{sec:MIP-Formulation}
In this section, we present our mixed-integer programming relaxation for \eqref{eqn:generic-problem}. We start by describing the construction of Yarotsky, which is a piecewise linear neural network approximation for the univariate quadratic function $F(x) = x^2$. We then formulate the graph of this piecewise-linear function using mixed-integer programming, and use it to build a tight under-approximation for the quadratic optimization problem \eqref{eqn:generic-problem}.

For ease of notation, for any integers $i \le j$, we define $\lrbr{i,j} \defeq \{i, i+1, \dots, j\}$, and for integers $i \ge 1$ we define $\lrbr{i} \defeq \{1, 2, \dots, i\}$. 
\begin{figure}
    \centering
\begin{tabular}{lr}
        \begin{tikzpicture}
\pgfplotsset{%
    width=0.45\textwidth,
}
\definecolor{color0}{rgb}{0.12156862745098,0.466666666666667,0.705882352941177}
\definecolor{color1}{rgb}{1,0.498039215686275,0.0549019607843137}
\definecolor{color2}{rgb}{0.172549019607843,0.627450980392157,0.172549019607843}

\begin{axis}[
legend cell align={left},
legend columns=3,
legend style={at={(0.5,1.2)}, anchor=north, draw=white!80.0!black},
tick align=outside,
tick pos=left,
x grid style={white!69.01960784313725!black},
xmin=-0.05, xmax=1.05,
xtick style={color=black},
xtick={0,0.125,0.25,0.375,0.5,0.625,0.75,0.875,1},
xticklabels={$0$,$\tfrac18$,$\tfrac14$,$\tfrac38$,$\tfrac12$,$\tfrac58$,$\tfrac34$,$\tfrac78$,$1$},
y grid style={white!69.01960784313725!black},
ymin=-0.05, ymax=1.05,
ytick style={color=black},
ytick={0,0.125,0.25,0.375,0.5,0.625,0.75,0.875,1},
yticklabels={0,1/8,1/4,3/8,1/2,5/8,3/4,7/8,1}
]
\addplot [semithick, color0]
table {%
0 0
0.5 1
1 0
};
\addlegendentry{$G_1$}
\addplot [semithick, color1]
table {%
0 0
0.25 1
0.5 0
0.75 1
1 0
};
\addlegendentry{$G_2$}
\addplot [semithick, color2]
table {%
0 0
0.125 1
0.25 0
0.375 1
0.5 0
0.625 1
0.75 0
0.875 1
1 0
};
\addlegendentry{$G_3$}
\end{axis}

\end{tikzpicture}  
&
        \begin{tikzpicture}
\pgfplotsset{%
    width=0.45\textwidth,
}
\definecolor{color0}{rgb}{0.12156862745098,0.466666666666667,0.705882352941177}
\definecolor{color1}{rgb}{1,0.498039215686275,0.0549019607843137}
\definecolor{color2}{rgb}{0.172549019607843,0.627450980392157,0.172549019607843}
\definecolor{color3}{rgb}{0.83921568627451,0.152941176470588,0.156862745098039}
\definecolor{color4}{rgb}{0.580392156862745,0.403921568627451,0.741176470588235}

\begin{axis}[
legend cell align={left},
legend columns=5,
legend style={at={(0.5,1.2)}, anchor=north, draw=white!80.0!black},
tick align=outside,
tick pos=left,
x grid style={white!69.01960784313725!black},
xmin=-0.05, xmax=1.05,
xtick style={color=black},
xtick={0,0.125,0.25,0.375,0.5,0.625,0.75,0.875,1},
xticklabels={$0$,$\tfrac18$,$\tfrac14$,$\tfrac38$,$\tfrac12$,$\tfrac58$,$\tfrac34$,$\tfrac78$,$1$},
y grid style={white!69.01960784313725!black},
ymin=-0.05, ymax=1.05,
ytick style={color=black},
ytick={0,0.125,0.25,0.375,0.5,0.625,0.75,0.875,1},
yticklabels={0,1/8,1/4,3/8,1/2,5/8,3/4,7/8,1}
]
\addplot [semithick, color0]
table {%
0 0
1 1
};
\addlegendentry{$F_0$}
\addplot [semithick, color1]
table {%
0 0
0.5 0.25
1 1
};
\addlegendentry{$F_1$}
\addplot [semithick, color2]
table {%
0 0
0.25 0.0625
0.5 0.25
0.75 0.5625
1 1
};
\addlegendentry{$F_2$}
\addplot [semithick, color3]
table {%
0 0
0.125 0.015625
0.25 0.0625
0.375 0.140625
0.5 0.25
0.625 0.390625
0.75 0.5625
0.875 0.765625
1 1
};
\addlegendentry{$F_3$}
\addplot [semithick, color4]
table {%
0 0
0.01 0.0001
0.02 0.0004
0.03 0.0009
0.04 0.0016
0.05 0.0025
0.06 0.0036
0.07 0.0049
0.08 0.0064
0.09 0.0081
0.1 0.01
0.11 0.0121
0.12 0.0144
0.13 0.0169
0.14 0.0196
0.15 0.0225
0.16 0.0256
0.17 0.0289
0.18 0.0324
0.19 0.0361
0.2 0.04
0.21 0.0441
0.22 0.0484
0.23 0.0529
0.24 0.0576
0.25 0.0625
0.26 0.0676
0.27 0.0729
0.28 0.0784
0.29 0.0841
0.3 0.09
0.31 0.0961
0.32 0.1024
0.33 0.1089
0.34 0.1156
0.35 0.1225
0.36 0.1296
0.37 0.1369
0.38 0.1444
0.39 0.1521
0.4 0.16
0.41 0.1681
0.42 0.1764
0.43 0.1849
0.44 0.1936
0.45 0.2025
0.46 0.2116
0.47 0.2209
0.48 0.2304
0.49 0.2401
0.5 0.25
0.51 0.2601
0.52 0.2704
0.53 0.2809
0.54 0.2916
0.55 0.3025
0.56 0.3136
0.57 0.3249
0.58 0.3364
0.59 0.3481
0.6 0.36
0.61 0.3721
0.62 0.3844
0.63 0.3969
0.64 0.4096
0.65 0.4225
0.66 0.4356
0.67 0.4489
0.68 0.4624
0.69 0.4761
0.7 0.49
0.71 0.5041
0.72 0.5184
0.73 0.5329
0.74 0.5476
0.75 0.5625
0.76 0.5776
0.77 0.5929
0.78 0.6084
0.79 0.6241
0.8 0.64
0.81 0.6561
0.82 0.6724
0.83 0.6889
0.84 0.7056
0.85 0.7225
0.86 0.7396
0.87 0.7569
0.88 0.7744
0.89 0.7921
0.9 0.81
0.91 0.8281
0.92 0.8464
0.93 0.8649
0.94 0.8836
0.95 0.9025
0.96 0.9216
0.97 0.9409
0.98 0.9604
0.99 0.9801
1 1
};
\addlegendentry{$F$}
\end{axis}

\end{tikzpicture}
\end{tabular}
    \caption{\emph{Left:} The intermediary sawtooth functions $G_i=2^{2i}(F_{i-1}-F_{i})$. \emph{Right:}
    The approximation for $F(x)=x^2$ of Yarotsky by functions $F_i$~\cite[Figure 2]{Yarotsky-2016}.}
    \label{fig:ReLU}
\end{figure}
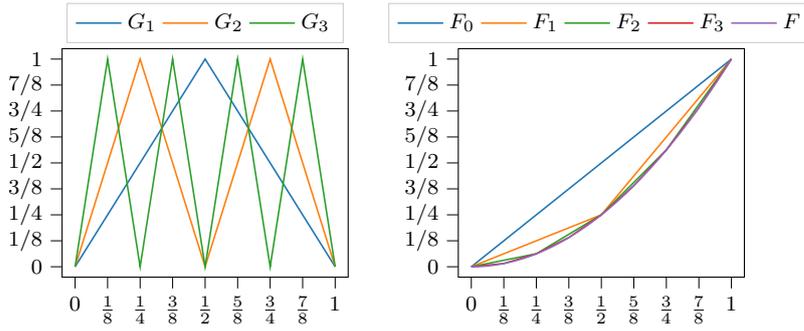
\subsection{The construction of Yarotsky}
\label{ssec:yarotsky}
For fixed $\L \in \mathbb{N}$, we wish to model the function $F_L(x)$, defined as the piecewise linear interpolant to $y=x^2$ on the interval $[0,1]$ at $2^L+1$ uniformly spaced breakpoints:
\begin{equation}
    F_L(x) = \tfrac{2i-1}{N} (x-\tfrac i N)+\tfrac{i^2}{N^2} \quad \text{ if } x \in [\tfrac {i-1}{N},\tfrac{i}{N}] \text{ for some } i \in \lrbr{2^L}.
\end{equation}
Define the \emph{sawtooth functions} $G_i\colon [0,1] \to [0,1]$ as $G_i = 2^i(F_{i-1}-F_i)$. Yarotsky~\cite{Yarotsky-2016} shows that $G_i$ can be defined recursively as 
\begin{subequations} \label{eqn:sawtooth}
\begin{align}
    G_0(x) &= x, \\
    G_i(x) &= 
    \begin{cases} 
        2G_{i-1}(x) & G_{i-1}(x) < 1/2 \\ 
        2(1-G_{i-1}(x)) & G_{i-1}(x) \geq 1/2
    \end{cases} \quad i \in \llbracket \L \rrbracket,
\end{align}
\end{subequations}
and, furthermore, that
\begin{equation} \label{eqn:quadratic-approx}
    F_\L(x) = x - \sum_{i=1}^\L 2^{-2i}G_i(x).
\end{equation}
Yarotsky further shows that $F(x)$ approximates $x^2$ to a pointwise error of $|x^2 - F_L(x)| \le 2^{-2\L-2}$~\cite[Proposition 2]{Yarotsky-2016}.\footnote{Furthermore, Yarotsky~\cite{Yarotsky-2016} observes that it is straightforward to represent each of the sawtooth functions as a composition of the standard ReLU activation function $\sigma(x) = \max\{0,x\}$. For example, $G_1(x) = 2\sigma(x) - 4 \sigma(x-\frac 12) + 2 \sigma(x-1)$. In this way, $F_\L$ can be written as a neural network with a very particular choice of architecture and weight values.} We include an illustration of $G_\L$ and $F_\L$ for different values of $\L$ in \cref{fig:ReLU}(b). Crucially, we will later make use of the fact that $F_\L(x) \geq F(x)$ for each $0 \leq x \leq 1$, i.e. $F_\L$ is an overestimator for $F$.

\subsection{A MIP formulation for $F_\L$} \label{ssec:formulation}
We now turn our attention to constructing a mixed-integer programming formulation for $F_\L$. As \eqref{eqn:quadratic-approx} tells us that $F_\L$ depends linearly on the sawtooth functions $G_i$, we turn our attention to formulating the piecewise-linear equations \eqref{eqn:sawtooth} using MIP. 

For the remainder of the section we will use $g_i$ as decision variables in our optimization formulation corresponding to the output of the $i$-th sawtooth function $G_i$. Therefore, $g_0 = x$, and for each of the other sawtooth functions $G_i$ for $i \in \llbracket \L \rrbracket$, we introduce a binary decision variable $\alpha_i$. Given some input $x$, these binary variables serve to indicate which piece of the sawtooth the input lies on:
\begin{subequations} \label{eqn:binary-implications}
\begin{align}
    \alpha_i &= 0 \Longrightarrow \left(g_i = 2g_{i-1}\right) \wedge \left(0 \leq g_{i-1} \leq 1/2\right) \\
    \alpha_i &= 1 \Longrightarrow \left(g_i = 2(1-g_{i-1})\right) \wedge \left(1/2 \leq g_{i-1} \leq 1\right)
\end{align}
\end{subequations}
Define the set $S_i := \Set{ (g_{i-1},g_i,\alpha_i) \in [0,1] \times [0,1] \times \{0,1\} | \eqref{eqn:binary-implications} }$ for each $i\in \llbracket \L \rrbracket$. It is not difficult to see that a convex hull formulation for $S_i$ is given by
\begin{subequations} \label{eqn:ideal-formulation-one-layer}
    \begin{align}
        2(\alpha_i - g_{i-1}) &\le g_{i} \le 2(1 - g_{i-1}), \label{eq_Sia}\\
        2(g_{i-1} - \alpha_i) &\le g_{i} \le 2 g_{i-1}. \label{eq_Sib} \\
        (g_{i-1},g_i,\alpha_i) &\in [0,1] \times [0,1] \times \{0,1\}.
    \end{align}
\end{subequations}
Chaining these formulations together for each $i$, we construct a MIP formulation for $\mathcal G_\L := \Set{ (x,y) \in [0,1] \times [0,1] | y = F_\L(x) }$, the graph of the neural network approximation function $F_\L$.

\begin{proposition} \label{prop:graph-formulation}
    Fix some $\L \in \bbN$. A MIP formulation for $(x,y) \in \mathcal G_\L$ is
    \begin{equation} \label{eqn:graph-formulation}
    \begin{array}{ll}
        g_0 = x \\
        (g_{i-1}, g_i, \alpha_i) \in S_i & i \in \llbracket \L \rrbracket \\
        y = x - \dsum_{i=1}^\L 2^{-2i}g_i.
    \end{array}
    \end{equation}
\end{proposition}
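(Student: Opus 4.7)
The plan is to verify the two obligations of a MIP formulation: (i) every $(x,y) \in \mathcal{G}_\L$ can be extended to a feasible point of \eqref{eqn:graph-formulation}, and (ii) the $(x,y)$-projection of every feasible point of \eqref{eqn:graph-formulation} lies in $\mathcal{G}_\L$. The backbone of the argument is the inductive claim that every feasible solution satisfies $g_i = G_i(x)$ for each $i \in \lrbr{\L}$, after which Yarotsky's identity \eqref{eqn:quadratic-approx} immediately gives $y = F_\L(x)$.

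The workhorse is a one-layer lemma: $(g_{i-1}, g_i, \alpha_i) \in S_i$ if and only if $g_{i-1} \in [0,1]$ and $g_i = 2\min\{g_{i-1}, 1 - g_{i-1}\}$, i.e., $g_i$ equals the tent map applied to $g_{i-1}$. I would prove this by splitting on the binary variable. When $\alpha_i = 0$, constraint \eqref{eq_Sib} collapses to $g_i = 2g_{i-1}$, and the right half of \eqref{eq_Sia} then forces $2g_{i-1} \le 2(1-g_{i-1})$, i.e. $g_{i-1} \le 1/2$. When $\alpha_i = 1$, the left half of \eqref{eq_Sia} forces $g_i = 2(1-g_{i-1})$, and the right half of \eqref{eq_Sib} gives $g_{i-1} \ge 1/2$. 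Either way $g_i = 2\min\{g_{i-1},1-g_{i-1}\}$, which is precisely the one-step recursion \eqref{eqn:sawtooth}. For the converse, given any $g_{i-1} \in [0,1]$, select $\alpha_i = 0$ when $g_{i-1} \le 1/2$ and $\alpha_i = 1$ otherwise, set $g_i$ accordingly, and verify \eqref{eq_Sia}--\eqref{eq_Sib} by direct substitution.

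Granted the lemma, soundness follows by induction on $i$: $g_0 = x = G_0(x)$ by the first constraint of \eqref{eqn:graph-formulation}, and if $g_{i-1} = G_{i-1}(x)$ then the lemma combined with the recursion \eqref{eqn:sawtooth} yields $g_i = G_i(x)$. Substituting into the final equation of \eqref{eqn:graph-formulation} and invoking \eqref{eqn:quadratic-approx} gives $y = F_\L(x)$, so $(x,y) \in \mathcal{G}_\L$. Completeness is the mirror image: starting from $(x, F_\L(x)) \in \mathcal{G}_\L$, recursively define $g_i = G_i(x)$ and pick $\alpha_i$ as in the constructive half of the lemma; every constraint of \eqref{eqn:graph-formulation} is then satisfied by construction, with the output equation confirmed again by \eqref{eqn:quadratic-approx}.

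The only subtle point I anticipate is the breakpoint $g_{i-1} = 1/2$, where both $\alpha_i = 0$ and $\alpha_i = 1$ are simultaneously feasible. Both choices produce the same output $g_i = 1$, so this non-uniqueness is harmless: it neither introduces spurious $(x,y)$ values nor disrupts the inductive chain. Beyond this, the verification is a transparent case analysis threaded into a single-variable induction; the genuinely hard work of the section will arise in the later sharpness and facet-defining results rather than in this correctness claim.
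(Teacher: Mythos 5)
Your argument is correct and is essentially the fully worked-out version of the reasoning the paper leaves implicit: the paper states \cref{prop:graph-formulation} without proof, relying on the observation that each $S_i$ encodes exactly one step of the sawtooth recursion \eqref{eqn:sawtooth} so that chaining them and applying \eqref{eqn:quadratic-approx} yields the graph of $F_\L$. Your one-layer lemma (that the integer points of \eqref{eqn:ideal-formulation-one-layer} force $g_i = 2\min\{g_{i-1},1-g_{i-1}\}$), the induction, and the handling of the ambiguous breakpoint $g_{i-1}=1/2$ all match that intended argument.
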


We emphasize that this formulation is extremely compact: it requires only $\mathcal{O}(L)$ binary variables, auxiliary continuous variables, and constraints. As noted in \cref{ssec:yarotsky}, $F_\L$ approximates $F$ to within $\mathcal{O}(2^{-\L})$ pointwise, which implies that the size of our formulation scales logarithmically in the desired accuracy.

It is a straightforward extension of \cref{prop:graph-formulation} to consider more general interval domains $x \in [l,u]$ on the inputs. In particular, introducing two auxiliary variables $\tilde{x}, \tilde{y} \in [0,1]$, we formulate $\tilde{y} = F(\tilde{x}) = \tilde{x}^2$ using \eqref{eqn:graph-formulation}, and then map them to the $(x,y)$ variables via the linear transformation
\begin{align*}
    x = l + (u-l) \tilde{x}, \, \, \ 
    y = l^2 + 2l(u-l)\tilde{x} + (u-l)^2\tilde{y}.
\end{align*}

\subsection{Tying it all together}
We are now prepared to construct our mixed-integer programming approximation for \eqref{eqn:generic-problem}. For the objective of \eqref{eqn:generic-problem}, compute a nonnegative diagonal matrix $D$ such that $Q + D$ is positive semidefinite.\footnote{This can be accomplished in a number of ways: for example, by computing the minimum eigenvalue of $D$, or by solving a semidefinite programming problem~\cite{Dong-Luo-2018}.} Then, for a given $\L$, the approximation for \eqref{eqn:generic-problem} is:
\begin{subequations} \label{eqn:approx-problem}
    \begin{alignat}{2}
        \min_{x \in X,y}\quad& h^D(x,y) \equiv x'(Q + D)x + c \cdot x - D  y  \\
        \text{s.t.} \quad & (x_i,y_i) \in \mathcal{G}_\L \quad & i \in \llbracket n \rrbracket. \label{eqn:approx-problem-3}
    \end{alignat}
\end{subequations}
Using the formulation \eqref{eqn:graph-formulation} for the constraint \eqref{eqn:approx-problem-3}, this yields a mixed-integer convex quadratic reformulation of the problem (ignoring the potential structure of $X$). This formulation requires at most $nL$ binary variables and $\mathcal{O}(nL)$ auxiliary continuous variables and linear constraints. Furthermore, recall that we may set $L = \mathcal{O}(\log(1/\varepsilon))$ to attain an approximation of accuracy $\varepsilon$ for the equations \eqref{eqn:generic_quad_D_eqns}.

Consider any $\hat{x} \in X$, along with any $\hat{y}$ such that $(\hat{x},\hat{y})$ satisfies \eqref{eqn:approx-problem-3}. Since $F_\L$ overestimates $F$, for each $i \in \llbracket n \rrbracket$ we have $\hat{x}_i^2 \leq \hat{y}_i$. Therefore, $h^D(\hat{x},\hat{y}) \leq h(\hat{x})$. Since there always will exist such a $\hat{y}$ for any $\hat{x} \in X$, \eqref{eqn:approx-problem} offers a valid dual bound on the optimal cost of \eqref{eqn:generic-problem}.

Note that this approach can readily be adapted to handle quadratic constraints. In particular, this transformation will offer a \emph{relaxation} of the quadratically constrained problem. Note that the error bound derived above is with respect to the quadratic constraint that is being relaxed. It may not translate into an error bound on the objective value of the optimization problem, a known phenomena in the global optimization literature~\cite{Dey:2015}.

\section{Gray Codes and Binary Representation}\label{sec:gcodes}
In this section, we introduce the reflected Gray code, and prove some of its useful properties. 

For the remainder of this work, we will work with two notions of expressing integers as vectors in $\{0,1\}^*$.  First, we consider the standard binarization with $L$ bits.  That is, for an integer $i \in \llbracket 0, 2^L-1 \rrbracket$ we define $\bm \beta^i \in \{0,1\}^L$ such that 
\begin{equation}
i = \sum_{j = 1}^{L} 2^{L-j} \beta^i_j.
\label{eq:bindef}
\end{equation}
Next, we define the \emph{reflected Gray code} sequence, which is a sequence of binary representations of integers that is extremely well-studied in electrical engineering and engineering~\cite{Savage:1997}. Notably, each adjacent pair in the sequence differs in exactly one bit. As presented in \cite{Foss1954} and references therein, the $L$-bit reflected Gray code $\bm \alpha^i\in \{0,1\}^L$ representing the integer $i$ can be described by the recursion 
\begin{align}
\label{eq:gray_code_defn}
\alpha^i_1 &= \beta^i_1\\
\alpha^i_j &:= \beta^i_j \oplus \beta^i_{j-1}   & \text{ for all } j = 2, \dots, L,
\end{align}
where we use $\oplus$ to denote addition modulo $2$.  By inverting the relation, we obtain the formula
\begin{align}
\label{eq:alternative-definition}
    \beta^i_j &= \alpha^i_1 \oplus \alpha^i_{2} \oplus \cdots \oplus \alpha^i_{j} & \text{ for } j =1, \dots, L.
\end{align}
In this way, flipping any $\alpha_j$ bit implies that we flip all less significant bits $\beta_k$ for $k \geq j$.  See \cref{fig:reflected_gray_code} for an illustration of how to build the reflected Gray code, which we will henceforth refer to as `the Gray code'.
\begin{figure}
    \centering
    \begin{tabular}{ccc}
$L = 1$ & $L= 2$ & $L = 3$\\
\hline \\
$
\begin{array}{l|c}
\text{Code} \ \ & \text{Number}\\
\hline
\ \ \colhlighty{0} \ \ & 0\\
\ \ \colhlight{1}& 1\\
\end{array}
$
&
$
\begin{array}{l|c}
\text{Code} \ \ & \text{Number}\\
\hline
0\ \ \  \colhlighty{0} \ \ & 0\\
0\ \ \ \colhlighty{1} & 1\\
1\ \ \ \colhlight{1} & 2\\
1\ \ \  \colhlight{0} & 3
\end{array}
$
&
$
\begin{array}{l|c}
\text{Code} \ \ & \text{Number}\\
\hline
0\ \ \  \colhlighty{0}\hspace{0.45cm} \colhlighty{0} \ \ & 0\\
0\ \ \  \colhlighty{0} \hspace{0.45cm} \colhlighty{1} & 1\\
0\ \ \  \colhlighty{1}\hspace{0.45cm} \colhlighty{1} & 2\\
0\ \ \  \colhlighty{1}\hspace{0.45cm}\colhlighty{0} & 3\\
1\ \ \ \colhlight{1}\hspace{0.45cm} \colhlight{0} & 4\\
1\ \ \ \colhlight{1}\hspace{0.45cm} \colhlight{1} & 5\\ 
1\ \ \ \colhlight{0}\hspace{0.45cm} \colhlight{1} & 6\\
1\ \ \ \colhlight{0}\hspace{0.45cm} \colhlight{0}  &7
\end{array}
$
\end{tabular}
    \caption{Building the reflected Gray code.  The reflected Gray code with $L+1$ bits is build from the reflected Gray code on $L$ bits by appending 0s in front of it, then reflecting the Gray code sequence, and then appending 1s in front of it.}
    \label{fig:reflected_gray_code}
\end{figure}

One key property of any Gray code is that successive integer representations differ by only 1 bit, i.e.,
\begin{equation}
\label{eq:one_bit_flip}
    \|\bm \alpha^i - \bm \alpha^{i+1}\|_1  = 1.
\end{equation}
That is, only one bit changes between adjacent binary vectors in the sequence.  We show a similar property holds if we restrict the set of integers we work with by fixing some of the bits in the Gray code vector. To help prove this property, we note the following well-known property of the reflected Gray code in this work.

\begin{lemma}\label{lem:gray_code_rec_def}
For each $i \in \lrbr{0,2^L-1}$, let $\bm{ \tilde{\alpha}}^i$ be the $L$-bit Gray code for $i$, and let $\bm \alpha^j$ be the $L+1$-bit Gray code for some $i \in \lrbr{0, 2^{L+1}-1}$.
\begin{enumerate}
    \item If $j \in \lrbr{0,2^L-1}$, then $\bm \alpha^j = [0,\bm{\tilde \alpha}^j].$
    \item If $j \in \lrbr{2^L,2^{L+1}-1}$, then $\bm \alpha^j = [1,\bm{\tilde \alpha}^{i}]$, where $i = 2^{L+1}-j-1$.
\end{enumerate}
\end{lemma}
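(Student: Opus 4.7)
The plan is to work directly from the bitwise definition of the reflected Gray code in \eqref{eq:gray_code_defn} and compare the $(L{+}1)$-bit Gray code $\bm\alpha^j$ to the $L$-bit Gray codes $\bm{\tilde\alpha}^{\cdot}$ by analyzing the standard binary representation. Throughout, let $\bm{\beta}^j \in \{0,1\}^{L+1}$ and $\bm{\tilde\beta}^{\cdot} \in \{0,1\}^{L}$ denote the standard binarizations defined as in \eqref{eq:bindef}.

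For part (1), suppose $j \in \lrbr{0,2^L-1}$, so the most significant bit satisfies $\beta^j_1 = 0$ and $\beta^j_k = \tilde\beta^j_{k-1}$ for $k \in \lrbr{2,L+1}$. Then $\alpha^j_1 = \beta^j_1 = 0$ by \eqref{eq:gray_code_defn}, $\alpha^j_2 = \beta^j_2 \oplus \beta^j_1 = \tilde\beta^j_1 \oplus 0 = \tilde\alpha^j_1$, and for $k \in \lrbr{3,L+1}$,
\[
\alpha^j_k \;=\; \beta^j_k \oplus \beta^j_{k-1} \;=\; \tilde\beta^j_{k-1} \oplus \tilde\beta^j_{k-2} \;=\; \tilde\alpha^j_{k-1}.
\]
This yields $\bm\alpha^j = [0, \bm{\tilde\alpha}^j]$, as claimed.

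For part (2), let $m = j - 2^L \in \lrbr{0,2^L-1}$ and $i = 2^{L+1}-j-1 = 2^L - m - 1$. Since $i+m = 2^L - 1$, the $L$-bit representations of $i$ and $m$ are bitwise complements: $\tilde\beta^i_k = 1 - \tilde\beta^m_k$ for all $k \in \lrbr{L}$. The top bit of $\bm\beta^j$ is $\beta^j_1 = 1$, and $\beta^j_k = \tilde\beta^m_{k-1}$ for $k \in \lrbr{2,L+1}$. Then $\alpha^j_1 = 1$, and
\[
\alpha^j_2 \;=\; \beta^j_2 \oplus \beta^j_1 \;=\; \tilde\beta^m_1 \oplus 1 \;=\; 1 - \tilde\beta^m_1 \;=\; \tilde\beta^i_1 \;=\; \tilde\alpha^i_1.
\]
For $k \in \lrbr{3,L+1}$, the key identity $(1\oplus a)\oplus(1\oplus b) = a\oplus b$ gives
\[
\tilde\alpha^i_{k-1} \;=\; \tilde\beta^i_{k-1}\oplus\tilde\beta^i_{k-2} \;=\; (1\oplus\tilde\beta^m_{k-1})\oplus(1\oplus\tilde\beta^m_{k-2}) \;=\; \tilde\beta^m_{k-1}\oplus\tilde\beta^m_{k-2} \;=\; \beta^j_k \oplus \beta^j_{k-1} \;=\; \alpha^j_k,
\]
so $\bm\alpha^j = [1, \bm{\tilde\alpha}^i]$.

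The only delicate step is the bookkeeping in case (2) — in particular, verifying that the reflection $j \mapsto 2^{L+1}-j-1$ induces bitwise complementation on the lower $L$ bits, and that XOR-ing a common $1$ into adjacent bits cancels out. Both facts are elementary, so I do not expect any obstacle beyond careful index tracking.
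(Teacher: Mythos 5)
Your proof is correct and follows essentially the same route as the paper's: both arguments work directly from the bitwise definition \eqref{eq:gray_code_defn} and exploit the fact that the reflection $j \mapsto 2^{L+1}-j-1$ complements every bit of the binary representation, which manifests in the Gray code as a flip of the leading bit only. The only cosmetic difference is that the paper derives part (2) by invoking the inverted relation \eqref{eq:alternative-definition} to argue that flipping $\alpha_1$ flips all $\beta$-bits, whereas you verify the cancellation $(1\oplus a)\oplus(1\oplus b)=a\oplus b$ position by position; both are sound.
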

\begin{proof}
	First, for each $i \in \lrbr{0,2^L-1}$, let $\bm{\tilde \beta}^i$ be the corresponding $L$-bit binarization. Similarly, for each $j \in \lrbr{0,2^{L+1}-1}$, and let $\bm \beta^j$ be the corresponding $L+1$-bit binarization. Then, by \cref{eq:gray_code_defn} have that $\alpha^j_1 = \beta^j_1=0$ and $\bm \beta^i = [0, \bm{\tilde \beta}^i]$. Applying \cref{eq:gray_code_defn} recursively, this yields $\bm \alpha^i = [0, \bm{\tilde \alpha}^i ]$, as desired.
    
Now, let $i \in \lrbr{2^L,2^{L+1}-1}$, and let $\tilde{i} = 2^{L+1}-i-1$. Since $\tilde{i} \in \lrbr{0,2^L-1}$ we  have as before that $\bm \alpha^{\tilde i} = [0, \bm{\tilde \alpha}^{\tilde i}].$ We wish to show that $\bm \alpha^{i} = [1, \bm{\tilde \alpha}^{\tilde i}].$ Now note that
\begin{align*}
	\tilde{i}& = 2^{L+1}-i-1 = 2^{L+1}-\dsum_{j=1}^{L+1}2^{L+1-j} \beta_j -1 \\
    &=  2^{L+1} - 1 - \dsum_{j=1}^{L+1}2^{L+1-j} + \dsum_{j=1}^{L+1}2^{L+1-j} (1-\beta_j ) \\
    &= \dsum_{j=1}^{L+1}2^{L+1-j} (1-\beta_j )
\end{align*}
That is, in the binarization for $\tilde{i}$, we have $\bm \beta^{\tilde i} = \bm \beta^i \oplus [1,\dots,1]$, so that every bit has been flipped. Observing \cref{eq:alternative-definition}, we see that this can be induced by enforcing $\alpha_1^{\tilde i} = 1-\alpha_1^i$, with all other $\alpha_j^{\tilde i} = \alpha_j^i$: flipping the first $\alpha$-bit induces a flip in all $\beta$-bits. Thus, we obtain that $\bm \alpha^{i} = [1, \bm{\tilde \alpha}^{\tilde i}]$, as desired.
\end{proof}

\begin{lemma} \label{lem:gcode_res}
Let  $J \subseteq \lrbr{L}$ and $\overline {\bm \alpha} \in \{0,1\}^J$.  Let $X = \{x \in \lrbr{0,2^L-1} : \bm \alpha^x_J = \bm {\bar\alpha}\}$. We will write $X$ as $X=\{x_1,\dots,x_t\}$, ordered such that $x_j < x_{j+1}$. Let $I = \llbracket L \rrbracket \setminus J$. Then $\bm \alpha_I^{x_j}$ is a reflected Gray code for the indices $j$ over $X$. That is, for any $j \in 1, \dots, t$, we have
\begin{equation}
    \|{\bm \alpha_I}^{x_j} - {\bm \alpha_I}^{x_{j+1}}\|_1 = 1. \label{eq:rgc_fix_1}
\end{equation}
Furthermore, if $|I| \ge 1$, there exists a $\bm \gamma \in \{0,1\}^{|I|}$ such that, for all $j \in \lrbr{0,t}$, we have
\begin{equation}
    \bm \alpha_I^{x_j} \oplus \bm \gamma = \bm \alpha^j_{\lrbr{L-|I|+1,L}}.\label{eq:rgc_fix_2}
\end{equation}
That is, the modified Gray code after fixing some bits is the original reflected Gray code on $|I|$ bits, with some bits flipped.
\end{lemma}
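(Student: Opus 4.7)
The plan is to prove both parts simultaneously by induction on $L$, using Lemma~\ref{lem:gray_code_rec_def} to reduce the $(L{+}1)$-bit problem to an $L$-bit one. The base case $L=1$ involves only a handful of configurations of $(J,\bar{\bm\alpha})$ and can be verified by direct inspection (or taken vacuously when $|I|=0$). For the inductive step, I would split on whether the most significant Gray code bit (index $1$) is in $J$ (fixed) or in $I$ (free).

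In the case $1 \in J$, Lemma~\ref{lem:gray_code_rec_def} tells us that $X$ lives entirely in $\lrbr{0, 2^L-1}$ (if $\bar\alpha_1 = 0$) or entirely in $\lrbr{2^L, 2^{L+1}-1}$ (if $\bar\alpha_1 = 1$). In the first subcase, the identification $x \mapsto x$ sends $X$ onto a set $Y \subseteq \lrbr{0, 2^L-1}$ defined by the remaining fixed bits, preserves order, and identifies the trailing Gray bits, so the inductive hypothesis on $Y$ transfers immediately with the same $\bm\gamma$. In the second subcase, the identification $x \mapsto 2^{L+1}-1-x$ sends $X$ onto $Y$ but reverses order; to handle this, I would separately establish the reflection identity $\bm\alpha^{2^L-1-k} = \bm\alpha^k \oplus [1,0,\ldots,0]$ for the $L$-bit Gray code as a short consequence of Lemma~\ref{lem:gray_code_rec_def}. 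This identity converts the reversed $L$-bit Gray sequence on $Y$ into an XOR-shift of the forward sequence, allowing the inductive hypothesis to apply with an updated $\bm\gamma$.

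In the case $1 \in I$, split $X = X_0 \cup X_1$ according to the leading Gray bit. By Lemma~\ref{lem:gray_code_rec_def}, both halves correspond to the same $L$-bit set $Y = \{y_1 < \cdots < y_s\}$, with $X_0$ inheriting the natural order and $X_1$ inheriting the reversed order via $x \mapsto 2^{L+1}-1-x$. When $X$ is sorted, $X_0$ comes before $X_1$; adjacency in a single bit within $X_0$ is exactly the inductive hypothesis on $Y$, adjacency within $X_1$ combines the inductive hypothesis with the reflection identity, and adjacency across the $X_0$--$X_1$ boundary is the single flip of the leading Gray bit, since $[0,\bm{\tilde\alpha}^{y_s}]$ and $[1,\bm{\tilde\alpha}^{y_s}]$ share their trailing $L$ bits. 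Setting $\bm\gamma = [0,\bm{\gamma'}]$ for the $\bm{\gamma'}$ produced by the inductive hypothesis on $Y$, and matching the resulting formula against the $(L{+}1)$-bit Gray code on $j$ through Lemma~\ref{lem:gray_code_rec_def}, yields the second conclusion.

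The main obstacle is ensuring that both reversals --- in the $\bar\alpha_1=1$ branch of the fixed case and in the $X_1$ portion of the free case --- collapse cleanly into a single constant XOR rather than something order-dependent. The key technical input is the reflection identity above, which itself follows from a small secondary induction on $L$ via Lemma~\ref{lem:gray_code_rec_def}; once this identity is in hand, the remaining bookkeeping (tracking how the bit indices in $I$ shift between the $L$- and $(L{+}1)$-bit problems and verifying the index alignment in the statement) proceeds mechanically.
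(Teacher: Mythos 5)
Your proposal is correct and follows essentially the same route as the paper: induction on $L$ via Lemma~\ref{lem:gray_code_rec_def}, with the same case split on whether the leading Gray bit is fixed (and to which value) or free, the same order-reversal analysis in the $\bar\alpha_1=1$ and $X_1$ branches, and the same updates $\bm\gamma = \tilde{\bm\gamma}\oplus[1,0,\dots,0]$ and $\bm\gamma=[0,\tilde{\bm\gamma}]$. Your standalone reflection identity $\bm\alpha^{2^L-1-k}=\bm\alpha^k\oplus[1,0,\dots,0]$ is just a repackaging of the auxiliary invariant \cref{eq:rgc_fix_3} that the paper threads through its induction, so the two arguments are the same in substance.
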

\begin{proof}
    We will prove this by induction on $L$. To enable the use of \cref{lem:gray_code_rec_def}, we will also prove that, if $|I| \ge 1$, then for all $j \in \lrbr{0,t}$, $i=t-j$, we have 
    \begin{equation} \label{eq:rgc_fix_3}
        x^j = 2^{L} - x^i - 1.
   \end{equation}
    \textbf{Base case:} $L=1$
    
    For $L=1$, the possibilities are trivial, as there is only one bit. If we do not fix the bit, then we have $\alpha^j=[j]$ for each $j \in \{0,1\}$; this sequence of two vectors is trivially a Gray code sequence. This yields the original reflected Gray code for $L=1$, and so $\bm \gamma = [0]$. Finally, we have $t=1$, and $x_j = j$, yielding, for $i=1-j$, $x_j = 1-x_i = 2^1 - x^i - 1$, as required.
    
    On the other hand, if we do fix the bit, then there are no pairs of consecutive bits, and \cref{eq:rgc_fix_1} holds by default. In this case, we have $|I|=0$, so that the other results do not apply.
    
    \textbf{Inductive step}: 
    
    Let $L=k$, and suppose the desired properties hold for $L=k-1$. Let $J$, $\overline{\bm \alpha}$ be a choice of fixed bits for $L=k$. First, note that if $|J| = L$, then all bits are fixed and the \cref{eq:rgc_fix_1} holds by default, while the others do not apply, as $|I|=0$.
    
    Next, suppose $I = \{1\}$, so that the newly added bit is the first unfixed bit. Then we have $t=1$, and $\bm \alpha^{x_j} = [j, \bm{\bar \alpha}]$, with $\bm \alpha^{x_j}_I = [j]$. Thus, defining $\bm \gamma = [0]$, we have that \cref{eq:rgc_fix_1} and \cref{eq:rgc_fix_2} hold trivially. Finally, by \cref{lem:gray_code_rec_def} and the uniqueness of the $L$-bit Gray code for $j$, we have that $x_0 = 2^L - x_1 - 1$ and $x_1 = 2^L - x_0 - 1$, as required.
    
    Otherwise, suppose $|J| \le L-1$, with $I \neq \{1\}$. Then there are three cases: $1 \notin J$, or $1 \in J$ and either $\bar{\alpha}_1=1$ or $\bar{\alpha}_1=0$. Regardless of this choice, the corresponding choices $\tilde{J}$ and $\tilde{\bm \alpha}$ for $L=k-1$ can be attained by defining $\tilde{J} = J \setminus \{1\}$, and defining $\tilde{I}$ and $\tilde{\bm \alpha}$ accordingly. Consider the corresponding sequence $\tilde{X}$ for $L=k-1$. Then, by the inductive hypothesis, the desired results hold for $\tilde{X}$, and as $|\tilde{I}| \ge 1$, we can define a corresponding vector $\tilde{\bm \gamma} \in \{0,1\}^{|\tilde{I}|}$ so that \cref{eq:rgc_fix_2} holds.
    
    \textbf{Case 1}: $1 \in J$ with $\bar{\alpha}_1=0$. In this case, define $\tilde{J} = J \setminus \{1\}$ and $\tilde \alpha = \bar \alpha_{\lrbr{2,L}}$, and define $\tilde{X}$ and $\tilde{\bm \gamma}$ accordingly, noting that $|X| = |\tilde{X}| = t=2^{k-|J|-1}-1$. Let $j \in \lrbr{t}$. Then we have by \cref{lem:gray_code_rec_def} that ${\bm \alpha}^{x_j} = [0,{\tilde{\bm \alpha}}^{\tilde{x}_j}]$, so that ${\bm \alpha_I}^{x_j} = {\tilde{\bm \alpha}_I}^{\tilde{x}_j}$. Thus, \cref{eq:rgc_fix_1,eq:rgc_fix_2,eq:rgc_fix_3} hold by the induction hypothesis, with $\bm \gamma = \tilde{\bm \gamma}$.
    
    \textbf{Case 2}: $1 \in J$ with $\overline{\alpha}_1=1$. In this case, define $\tilde{J} = J \setminus \{1\}$ and $\tilde \alpha = \bar \alpha_{\lrbr{2,L}}$, and define $\tilde{X}$ accordingly, noting that $|X| = |\tilde{X}| = t$. Let $j \in \lrbr{t}$. Then, since $t-j=2^{k-|J|-1}-j-1$, we have by \cref{lem:gray_code_rec_def} that ${\bm \alpha}^{x_j} = [1,{\tilde{\bm \alpha}}^{\tilde{x}_{t-j}}]$, so that ${\bm \alpha_I}^{x_j} = {\tilde{\bm \alpha}}_I^{\tilde{x}_{t-j}}$. That is, the sequence of ${\bm \alpha}^{x_j}_I$'s the sequence of ${\tilde{\bm \alpha}}_I^{\tilde{x}_{j}}$'s, but in reversed order. Thus, \cref{eq:rgc_fix_1,eq:rgc_fix_3} hold by the induction hypothesis, as reversing the order of a sequence has no impact on results for consecutive or centrally reflected terms. Furthermore, by \cref{lem:gray_code_rec_def} and the induction hypothesis, we have that reversing the order of the sequence corresponds with flipping only the first bit $\alpha^{x_j}_1$, so that we can define $\bm \gamma = \tilde{\bm \gamma} \oplus [1, 0, \dots, 0]$ to obtain \cref{eq:rgc_fix_2}.
    
    \textbf{Case 3}: $1 \notin J$, so that the first bit is unfixed. In this case, define $\tilde{J} = J$ and $\tilde{\bm \alpha} = \bar{\bm \alpha}$, and define $\tilde{X}$ accordingly. Then $\tilde{t}=|\tilde{X}| = 2^{k-|J|-1}$. Let $j \in \lrbr{0,t}$ and let $i = t-j = 2^{k-|J|} - j - 1$, so that $j = 2^{k-|J|} - i - 1$ Then, by \cref{lem:gray_code_rec_def}, we can construct $X$ as follows:
    
    \begin{enumerate}
    \item If $j \in \lrbr{0,2^{k-|J|-1}-1}$, then ${\bm \alpha}^{x_j} = [0,{\tilde{\bm \alpha}}^{\tilde{x}_j}]$
    \item If $j \in \lrbr{2^{k-|J|-1},2^{k-|J|}-1}$, then ${\bm \alpha}^{x_j} = [1,{\tilde{\bm \alpha}}^{\tilde{x}_{i}}]$.
    \end{enumerate}
    
     \sloppy This yields \cref{eq:rgc_fix_3} immediately. Further, define $\bm \gamma = [0, \tilde{\bm \gamma}]$. Then for $j \in \lrbr{0,2^{k-|J|-1}-1}$, we have 
    $$\bm \alpha_I^{x_j} \oplus \bm \gamma = [0,{\tilde{\bm \alpha}}_I^{\tilde{x}_j}] \oplus [0, \tilde{\bm \gamma}] =  [0,{\tilde{\bm \alpha}_I}^{\tilde{x}_j} \oplus {\tilde{\bm \gamma}}] = [0, \bm \alpha^j_{\lrbr{L-|I|+2,L}}] = \bm \alpha^j_{\lrbr{L-(|I|)+1,L}},$$
    yielding \cref{eq:rgc_fix_2}. For $j \in \lrbr{2^{k-|J|-1},2^{k-|J|-1}-1}$, defining $i = t-j = 2^{k-|J|} - j - 1 \in \lrbr{\tilde{t}+1,t}$, we have by \cref{lem:gray_code_rec_def} that
    $$\bm \alpha_I^{x_j} \oplus \bm \gamma = [1,{\tilde{\bm \alpha}}_I^{\tilde{x}_i}] \oplus [0, \tilde{\bm \gamma}] =  [1,{\tilde{\bm \alpha}_I}^{\tilde{x}_i} \oplus {\tilde{\bm \gamma}}] = [1, \bm \alpha^i_{\lrbr{L-|I|+2,L}}] = \bm \alpha^j_{\lrbr{L-(|I|)+1,L}},$$
    again yielding \cref{eq:rgc_fix_2}.
    
    Now, \cref{eq:rgc_fix_1} trivially holds for all $j \neq 2^{k-|J|-1}-1$ as in cases 1 and 2, since the first bits of ${\bm \alpha}^{x_j}$ and ${\bm \alpha}^{x_{j+1}}$ match, and exactly one other bit differs by the induction hypothesis. Otherwise, if $j = 2^{k-|J|-1}-1$, then $i = 2^{k-|J|}-(2^{k-|J|-1}-1)-1 = 2^{k-|j|} = j+1$, and thus $x_j$ and $x_{j+1}$ differ by exactly the first bit $\alpha_1$ as indicated above. Thus, \cref{eq:rgc_fix_1} holds.
    From these cases, \cref{eq:rgc_fix_1,eq:rgc_fix_2,eq:rgc_fix_3} hold by induction. 
    \end{proof}
 
Next we show one more property of the code that occurs when extending the Gray code by 1 bit.
\begin{proposition}
\label{lem:gray_code_double}
For an integer $i \in \llbracket 0, 1, \dots, 2^{L} -1\rrbracket $, let $\bm{\alpha}^i$ and $\bm{\beta}^i$ be the $L$-bit Gray code and binary representations of $i$.
Let $\bm{\tilde \alpha}^{2i}$ and $\bm{\tilde \alpha}^{2i+1}$ be the $(L+1)$-bit Gray codes of the integers $2i$ and $2i+1$.

Then
\begin{align*}
    \bm{\tilde \alpha}^{2i} = \left[\alpha^i_1, \dots, \alpha^i_L, \beta^i_L\right]\ \text{ and } \ 
        \bm{\tilde \alpha}^{2i+1} = \left[\alpha^i_1, \dots, \alpha^i_L, 1-\beta^i_L\right].
\end{align*}

\end{proposition}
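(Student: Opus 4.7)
The plan is to prove this by direct computation from the defining relations \eqref{eq:bindef} and \eqref{eq:gray_code_defn}, reducing everything to the $L$-bit binary and Gray representations of $i$. The main task is to identify the $(L+1)$-bit binary representations of $2i$ and $2i+1$, and then translate these into the corresponding $(L+1)$-bit Gray codes bit by bit; there is no real obstacle here, since multiplying by two in binary just appends a $0$ (or a $1$ for $2i+1$) at the least significant position.

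First I would pin down the binarizations. Writing $\bm{\tilde \beta}^{2i}, \bm{\tilde \beta}^{2i+1} \in \{0,1\}^{L+1}$ for the $(L+1)$-bit binarizations of $2i$ and $2i+1$, the identity $i = \sum_{j=1}^L 2^{L-j}\beta^i_j$ gives
\begin{align*}
2i   &= \sum_{j=1}^{L} 2^{L+1-j}\beta^i_j + 0 \cdot 2^0, \\
2i+1 &= \sum_{j=1}^{L} 2^{L+1-j}\beta^i_j + 1 \cdot 2^0,
\end{align*}
so by uniqueness of the binarization one gets $\tilde\beta^{2i}_j = \tilde\beta^{2i+1}_j = \beta^i_j$ for $j \in \lrbr{L}$, and $\tilde\beta^{2i}_{L+1} = 0$, $\tilde\beta^{2i+1}_{L+1} = 1$.

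Next I would plug these into the Gray code recursion \eqref{eq:gray_code_defn} applied with $L+1$ bits. For $j=1$ we have $\tilde\alpha^{2i}_1 = \tilde\beta^{2i}_1 = \beta^i_1 = \alpha^i_1$, and similarly for $2i+1$. For $j \in \lrbr{2,L}$, the recursion gives
\[
\tilde\alpha^{2i}_j \;=\; \tilde\beta^{2i}_j \oplus \tilde\beta^{2i}_{j-1} \;=\; \beta^i_j \oplus \beta^i_{j-1} \;=\; \alpha^i_j,
\]
and likewise $\tilde\alpha^{2i+1}_j = \alpha^i_j$. So the first $L$ coordinates of both $\bm{\tilde\alpha}^{2i}$ and $\bm{\tilde\alpha}^{2i+1}$ agree with $\bm{\alpha}^i$, as claimed.

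Finally, for the newly added last bit, the recursion gives
\[
\tilde\alpha^{2i}_{L+1} \;=\; \tilde\beta^{2i}_{L+1} \oplus \tilde\beta^{2i}_L \;=\; 0 \oplus \beta^i_L \;=\; \beta^i_L,
\]
\[
\tilde\alpha^{2i+1}_{L+1} \;=\; \tilde\beta^{2i+1}_{L+1} \oplus \tilde\beta^{2i+1}_L \;=\; 1 \oplus \beta^i_L \;=\; 1-\beta^i_L,
\]
completing the proof. The entire argument is essentially a one-line application of \eqref{eq:gray_code_defn} after observing how doubling shifts the binary string; no induction or appeal to \cref{lem:gray_code_rec_def} is needed.
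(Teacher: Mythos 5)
Your proposal is correct and follows essentially the same route as the paper's proof: identify the $(L+1)$-bit binarizations of $2i$ and $2i+1$ as the binarization of $i$ with a $0$ or $1$ appended, then apply the Gray code recursion \eqref{eq:gray_code_defn} coordinate by coordinate. Your write-up is in fact a bit more explicit about the $j \in \lrbr{2,L}$ case, but the argument is the same.
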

\begin{proof}
First, note that
\begin{align*}
    \bm{\tilde \beta}^{2i} = \left[\beta_1, \dots, \beta_L, 0\right] \ and \ 
    \bm{\tilde \beta}^{2i+1} = \left[\beta_1, \dots, \beta_L, 1\right].
\end{align*}
Then
\begin{equation}
\tilde \alpha^{2i}_{L+1} = \tilde\beta^{2i}_{L+1} \oplus  \tilde\beta^{2i}_{L} =0 \oplus \beta^{i}_{L} = \beta^{i}_{L}
\  \text{ and } \ 
\tilde \alpha^{2i+1}_{L+1} = \tilde \beta^{2i}_{L+1} \oplus  \tilde \beta^{2i}_{L} =1 \oplus \beta^{i}_{L} = 1-\beta^{i}_{L}.
\end{equation}

Furthermore, $\tilde \beta^{2i}_j = \beta^{2i+1}_j = \beta^i$ for all $j= 1, \dots, L$, by definition, we have that $\tilde \alpha^{2i}_j = \tilde \alpha^{2i+1}_j = \alpha^i_j$ for all $j=1, \dots, L$. 
\end{proof}

\section{Formulation Strength}\label{sec:fstrength}
The \emph{strength} of a MIP formulation is a commonly used metric to assess its potential computational performance. We will work with the three following notions of strength.
\begin{definition}
    Consider a set $U \subseteq \R^n$. For a formulation $P^{\IPtiny}=\{(\bm u,\bm v,\bm z) \in \PLP : \bm z \in \Z^L\}$, where $\PLP \subseteq \R^{n+d+L}$ and $\proj_{\bm u}(P^{\IPtiny}) = U$, we say the the formulation $\PLP$ is
    \begin{itemize}
    	 \item \emph{sharp} if $\proj_{\bm u}(\PLP) = \conv(U)$.
         \item \emph{hereditarily sharp} if, for all $I \subseteq \lrbr{L}$ and $\bm{\bar{z}}_I \in \Z^{|I|}$, we have $$\proj_{\bm u}(\PLP|_{\bm{\bar{z}}_I = \bm{z}_I}) = \conv(\{{\bm u} \in U : (\bm u,\bm v,\bm z) \in \PLP\}|_{\bm{\bar{z}}_I = \bm{z}_I}).$$
         \item \emph{ideal} if $\proj_{\bm{z}}(\ext(\PLP)) \subseteq \Z^L$.
    \end{itemize}
\end{definition}
These definitions closely follow those in \cite{Huchette:2018}, except we define hereditary sharpness explicitly in terms of the current branch.

In this section, we explore the strength of our MIP formulation \cref{eqn:graph-formulation}. We also draw an interesting connection between how our formulation represents feasible points through a Gray code: in essence, feasible points are represented in the formulation by their $L$ most significant digits in a binary expansion. 

For our particular problem, define
\begin{equation}
        \begin{array}{rl}
        	\PLP &\defeq \{(x, y, \bm \alpha, \bm g) \in [0,1] \times \R^+ \times [0,1]^L \times [0,1]^{L+1} :\cref{eqn:graph-formulation}\}\\
        	\PIP &\defeq \{(x, y, \bm \alpha, \bm g) \in [0,1] \times \R^+ \times \{0,1\}^L \times [0,1]^{L+1} :\cref{eqn:graph-formulation}\}
        \end{array}
        \end{equation}
and
\begin{equation}
\begin{array}{rl}
    		\QLP &\defeq \proj_{x,y}(\PLP)\\
    \QIP &\defeq \proj_{x,y}(\PIP).
    \end{array}
    \label{eq:sharpness_sets}
\end{equation}

First, we show that $\QIP$ is, unfortunately, not ideal.

\begin{example}
    The formulation $\PIP$ approximating $y=x^2$ is not ideal.
\end{example}
\begin{proof}
Consider $L=2$, $x=0.25$, $\bm \alpha = [\tfrac 12, 1]$, $\bm g = [\tfrac 12, 1]$, and $y = 0.25 - 2^{-2}(\tfrac 12) - 2^{-4}(0.25) = \tfrac{11}{64}$. This point is chosen to maximize $g_2$ along a facet $g_2 \le 2 \cdot (2_x-\alpha_1)$ of the convex hull. It is an extreme point because it is incident with six facets: $g_2 \le 1$, $\alpha_2 \le 1$, $g_2 \le 2g_1$, $g_1 \le 2x$, $\alpha_1-x \le g_2$, and $y = x - 2^{-2}g_1 + 2^{-4}g_2$. Thus, $\PLP$ has a fractional extreme point, and so is not ideal.
\end{proof}
Next, we will show that $\QIP$ is sharp. To assist deriving this result, we define the generic sawtooth function $G\colon~\R \rightarrow \R$ as
\begin{equation} \label{eq:gdef}
    g_i = G(g_{i-1}) \defeq \begin{cases}
        2g_{i-1} & g_{i-1} < \tfrac{1}{2},\\
        2(1-g_{i-1}) & g_{i-1} \ge \tfrac{1}{2}.
    \end{cases}    
\end{equation}
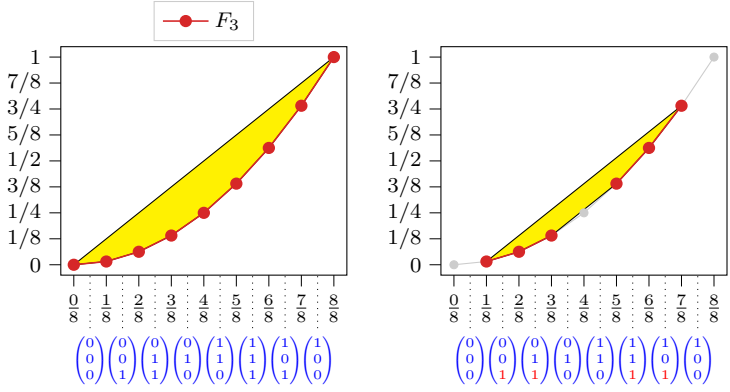
\begin{figure}
    \centering
    \begin{tikzpicture}
\pgfplotsset{%
    width=0.45\textwidth,
}
\definecolor{color0}{rgb}{0.12156862745098,0.466666666666667,0.705882352941177}
\definecolor{color1}{rgb}{1,0.498039215686275,0.0549019607843137}
\definecolor{color2}{rgb}{0.172549019607843,0.627450980392157,0.172549019607843}
\definecolor{color3}{rgb}{0.83921568627451,0.152941176470588,0.156862745098039}
\definecolor{color4}{rgb}{0.580392156862745,0.403921568627451,0.741176470588235}

\begin{axis}[
legend cell align={left},
legend columns=5,
legend style={at={(0.5,1.2)}, anchor=north, draw=white!80.0!black},
tick align=outside,
tick pos=left,
x grid style={white!69.01960784313725!black},
xmin=-0.05, xmax=1.05,
xtick style={color=black},
xtick={0,0.125,0.25,0.375,0.5,0.625,0.75,0.875,1},
xticklabels={$\tfrac08$,$\tfrac18$,$\tfrac28$,$\tfrac38$,$\tfrac48$,$\tfrac58$,$\tfrac68$,$\tfrac78$,$\tfrac88$},
extra x tick style = {blue,font=\tiny, dotted,},
extra x ticks = {0.0625, 0.1875, 0.3125, 0.4375, 0.5625, 0.6875, 0.8125, 0.9375},
extra x tick labels = {
$\begin{pmatrix}0\\0\\0\end{pmatrix}$,
$\begin{pmatrix}0\\0\\1\end{pmatrix}$,
$\begin{pmatrix}0\\1\\1\end{pmatrix}$,
$\begin{pmatrix}0\\1\\0\end{pmatrix}$,
$\begin{pmatrix}1\\1\\0\end{pmatrix}$,
$\begin{pmatrix}1\\1\\1\end{pmatrix}$,
$\begin{pmatrix}1\\0\\1\end{pmatrix}$,
$\begin{pmatrix}1\\0\\0\end{pmatrix}$},
extra tick style={tickwidth=20,tick style = red},
y grid style={white!69.01960784313725!black},
ymin=-0.05, ymax=1.05,
ytick style={color=black},
ytick={0,0.125,0.25,0.375,0.5,0.625,0.75,0.875,1},
yticklabels={0,1/8,1/4,3/8,1/2,5/8,3/4,7/8,1}
]
\draw[fill = yellow] 
(0, 0) --
(0.125,  0.015625) --
(0.25,  0.0625) --
(0.375,  0.140625) --
(0.5,  0.25) --
(0.625,  0.390625) --
(0.75,  0.5625) --
(0.875,  0.765625) --
(1,  1) --
(0, 0);

\addplot [semithick, color3, mark=*,mark options={scale=1.0,color3}]
table {%
0 0
0.125 0.015625
0.25 0.0625
0.375 0.140625
0.5 0.25
0.625 0.390625
0.75 0.5625
0.875 0.765625
1 1
};
\addlegendentry{$F_3$}

\end{axis}

\end{tikzpicture}  \ \  \begin{tikzpicture}
\pgfplotsset{%
    width=0.45\textwidth,
}
\definecolor{color0}{rgb}{0.12156862745098,0.466666666666667,0.705882352941177}
\definecolor{color1}{rgb}{1,0.498039215686275,0.0549019607843137}
\definecolor{color2}{rgb}{0.172549019607843,0.627450980392157,0.172549019607843}
\definecolor{color3}{rgb}{0.83921568627451,0.152941176470588,0.156862745098039}
\definecolor{color4}{rgb}{0.580392156862745,0.403921568627451,0.741176470588235}

\begin{axis}[
legend cell align={left},
legend columns=5,
legend style={at={(0.5,1.2)}, anchor=north, draw=white!80.0!black},
tick align=outside,
tick pos=left,
x grid style={white!69.01960784313725!black},
xmin=-0.05, xmax=1.05,
xtick style={color=black},
xtick={0,0.125,0.25,0.375,0.5,0.625,0.75,0.875,1},
xticklabels={$\tfrac08$,$\tfrac18$,$\tfrac28$,$\tfrac38$,$\tfrac48$,$\tfrac58$,$\tfrac68$,$\tfrac78$,$\tfrac88$},
extra x tick style = {blue,font=\tiny, dotted,},
extra x ticks = {0.0625, 0.1875, 0.3125, 0.4375, 0.5625, 0.6875, 0.8125, 0.9375},
extra x tick labels = {
$\begin{pmatrix}0\\0\\0\end{pmatrix}$,
$\begin{pmatrix}0\\0\\\alert{1}\end{pmatrix}$,
$\begin{pmatrix}0\\1\\\alert{1}\end{pmatrix}$,
$\begin{pmatrix}0\\1\\0\end{pmatrix}$,
$\begin{pmatrix}1\\1\\0\end{pmatrix}$,
$\begin{pmatrix}1\\1\\\alert{1}\end{pmatrix}$,
$\begin{pmatrix}1\\0\\\alert{1}\end{pmatrix}$,
$\begin{pmatrix}1\\0\\0\end{pmatrix}$},
extra tick style={tickwidth=20,tick style = red},
y grid style={white!69.01960784313725!black},
ymin=-0.05, ymax=1.05,
ytick style={color=black},
ytick={0,0.125,0.25,0.375,0.5,0.625,0.75,0.875,1},
yticklabels={0,1/8,1/4,3/8,1/2,5/8,3/4,7/8,1}
]
\addplot [gray!40,mark=*,mark options={scale=0.75}]
table {%
0 0
0.125 0.015625
0.25 0.0625
0.375 0.140625
0.5 0.25
0.625 0.390625
0.75 0.5625
0.875 0.765625
1 1
};

\draw[fill = yellow] 
(0.125,  0.015625) --
(0.25,  0.0625) --
(0.375,  0.140625) --
(0.625,  0.390625) --
(0.75,  0.5625) --
(0.875,  0.765625) --
(0.125,  0.015625);

\addplot [semithick, color3, mark=*,mark options={scale=1.0,color3}]
table {%
0.125 0.015625
0.25 0.0625
0.375 0.140625
};
\addplot [semithick, color3, mark=*,mark options={scale=1.0,color3}]
table {%
0.625 0.390625
0.75 0.5625
0.875 0.765625
};
\end{axis}

\end{tikzpicture}
    \caption{\emph{Left:} The piecewise linear approximation $\QIP$ in red and the linear relaxation $\QLP$ in yellow filled in.  The formulation is sharp because $\QLP$ is the convex hull of $\QIP$. The vectors $\bm \alpha \in \{0,1\}^3$ below are the corresponding binary $\alpha$ variables for any $x$ value on that interval $(\tfrac{i}{8}, \tfrac{i+1}{8})$. \emph{Right:}  The branch $\QIP|_{\alpha_3 = 1}$ in red and the linear relaxation $\QLP|_{\alpha_3 = 1}$ in yellow.  This demonstrates hereditary sharpness since it holds that $\QLP|_{\alpha_3 = 1}$ is the convex hull of  $\QIP|_{\alpha_3 = 1}$.  }
    \label{fig:sharpness}
\end{figure}
\begin{theorem}
\label{thm:sharp}
    The formulation $\PIP$ is sharp for $\QIP$.
\end{theorem}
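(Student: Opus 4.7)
The plan is to prove the two inclusions comprising $\proj_{x,y}(\PLP) = \conv(\QIP)$ separately. The direction $\conv(\QIP) \subseteq \proj_{x,y}(\PLP)$ is immediate: $\PIP \subseteq \PLP$ implies $\QIP \subseteq \proj_{x,y}(\PLP)$, and the latter is convex as a polyhedral projection, so it already contains $\conv(\QIP)$.

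For the reverse inclusion, I first identify $\conv(\QIP)$ explicitly. Since $F_L$ interpolates $x \mapsto x^2$ at equally spaced breakpoints, the slopes of its linear pieces are $(2i+1)/2^L$ for $i = 0,\dots,2^L - 1$, which are strictly increasing; hence $F_L$ is convex on $[0,1]$ with $F_L(0) = 0$ and $F_L(1) = 1$, so
\begin{equation*}
    \conv(\QIP) = \{(x,y) : 0 \le x \le 1,\ F_L(x) \le y \le x\},
\end{equation*}
bounded below by $F_L$ and above by the chord $y = x$. I then show that any LP-feasible $(x,y,\bm\alpha,\bm g)$ satisfies both non-trivial bounds. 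The bound $y \le x$ follows immediately from $y = x - \sum_{i=1}^L 2^{-2i} g_i$ together with $g_i \ge 0$.

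The lower bound $y \ge F_L(x)$ requires more work. Using $x - F_L(x) = \sum_{i=1}^L 2^{-2i} G_i(x)$ from \eqref{eqn:quadratic-approx}, this is equivalent to $\sum_{i=1}^L 2^{-2i} g_i \le x - F_L(x)$. Because each $\alpha_i$ appears only in the lower-bound inequalities \eqref{eq_Sia}--\eqref{eq_Sib} on $g_i$, all of which are satisfied by $\alpha_i = g_{i-1}$, maximizing $\sum_{i=1}^L 2^{-2i} g_i$ over LP-feasible points reduces to maximizing over the chain $0 \le g_i \le 2\min(g_{i-1},1-g_{i-1})$ for $i \in \lrbr{L}$, with $g_0 = x$. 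Define $\phi_L(z) \defeq \max \sum_{i=1}^L 2^{-2i} g_i$ subject to this chain with $g_0 = z$; I prove by induction on $L$ that $\phi_L(z) = z - F_L(z)$ for all $z \in [0,1]$. Peeling off the first layer, and noting that the subchain on $g_2,\dots,g_L$ is a scaled copy of the $(L-1)$-layer problem starting from $g_1$,
\begin{equation*}
    \phi_L(z) \;=\; 2^{-2}\max_{g_1 \in [0,G_1(z)]} \bigl(g_1 + \phi_{L-1}(g_1)\bigr) \;=\; 2^{-2}\max_{g_1 \in [0,G_1(z)]} \bigl(2g_1 - F_{L-1}(g_1)\bigr)
\end{equation*}
by the inductive hypothesis. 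Since the maximum slope of $F_{L-1}$ is $2 - 2^{1-L} < 2$, the function $g \mapsto 2g - F_{L-1}(g)$ is strictly increasing, so the outer maximum is attained at $g_1 = G_1(z)$. Using the self-similarity $G_{i}(z) = G_{i-1}(G_1(z))$ of the sawtooth functions \eqref{eqn:sawtooth}, one verifies the identity $z - F_L(z) = 2^{-2}\bigl(2 G_1(z) - F_{L-1}(G_1(z))\bigr)$, which closes the induction and yields the desired bound.

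The main obstacle is the recursive bookkeeping: one must cleanly exhibit the $(L-1)$-layer subproblem embedded after fixing $g_1$, and then verify the compatible identity between $F_L$ and $F_{L-1} \circ G_1$. The monotonicity step that lets us push $g_1$ to the boundary $G_1(z)$ reduces to the slope bound on the interpolant and is elementary. Notably, this argument does not require the Gray-code machinery of \cref{sec:gcodes} at all; that structure will be needed for the stronger property of hereditary sharpness, where branching on $\bm\alpha$ restricts the feasible pieces and a more delicate analysis is required.
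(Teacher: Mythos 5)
Your proof is correct, but it reaches the lower bound $y \ge F_L(x)$ by a genuinely different route than the paper. The paper also slices at a fixed $\bar x$ and handles the upper bound $y \le x$ identically, but for the minimum of $y$ over the slice it argues by contradiction: taking a $y$-minimal LP point, it locates the largest index $i$ where $g^*_i$ falls short of $G(g^*_{i-1})$, pushes $g_i$ up to the sawtooth value, propagates the change forward, and uses the Lipschitz bound $|g^*_{i+k}-\tilde g_{i+k}| \le 2^k\varepsilon$ together with the telescoping estimate $2^{-2i}\varepsilon\bigl(1 - 2^i\sum_{j>i}2^{-j}\bigr) > 0$ to show the perturbed $y$ is strictly smaller. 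You instead identify $\conv(\QIP)$ in closed form as $\{(x,y): 0\le x\le 1,\ F_L(x)\le y\le x\}$ via convexity of the interpolant, and compute the LP value function $\phi_L(z)=\max\sum_i 2^{-2i}g_i = z - F_L(z)$ exactly by a layer-peeling induction that exploits the self-similarity $G_i = G_{i-1}\circ G_1$. The two arguments hinge on the same quantitative fact---the weights $4^{-i}$ decay faster than the Lipschitz constant $2$ of the sawtooth compounds---which appears in your version as the slope bound $2-2^{1-L}<2$ making $g\mapsto 2g-F_{L-1}(g)$ increasing, and in the paper's as the geometric sum above. Your version is arguably cleaner for this theorem and yields the explicit convex-hull description as a byproduct; the paper's perturbation technique, however, is reused almost verbatim in \cref{lem:greedy} to prove hereditary sharpness (\cref{thm:hsharp}), a setting where fixing a subset of the $\alpha_i$ destroys the self-similar structure your recursion relies on, so the perturbation argument generalizes where the closed-form value function does not. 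Two small points to tidy up: state the trivial base case ($\phi_0(z)=0=z-F_0(z)$, or $\phi_1(z)=2^{-2}G_1(z)$), and note explicitly that the chain bound $g_i \le 2\min(g_{i-1},1-g_{i-1})$ already implies $g_i\le 1$, so no constraints of $\PLP$ are lost in the reduction.
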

\begin{proof}
For sharpness, we wish to show that $\QLP=\conv(\QIP)$. Clearly $\QLP \supseteq \conv(\QIP)$ from validity of our formulation; therefore, we focus on showing that $\QLP \subseteq \conv(\QIP)$. We start by fixing some $\bar{x} \in [0,1]$. The result then follows if we can show that the ``slice'' of $\QLP$ at $\bar{x}$, $\QLP|_{x = \bar x} \coloneqq \Set{y | (\bar{x},y) \in \QLP}$, is contained in $\proj_{x}(\conv(\QIP))$. Since $\QLP|_{x = \bar{x}} \subset [0,1]$ and is convex, it suffices to show that both its maximum value and minimum value are contained in $\conv(\QIP)$.

\sloppy First, let $y^* = \max \{ y \in \QLP|_{x = \bar{x}}\}$. From $\QLP$, we know that $y = x - \sum_{i=1}^L 2^{-2i} g_i \leq x$ since $g_i \geq 0$ for all $i$.  Hence, $y^* \leq \bar x$. Next, we observe that, as $(0,0),(1,1) \in \QIP$, convexity in turn implies that $(\bar{x},\bar{x}) \in \conv(\QIP) \subseteq \QLP$. Therefore, $y^* = \bar{x}$ by maximality, and so  $(\bar{x},y^*) \in \conv(\QIP)$. 

Next, let $y^* = \min \{ y \in \QLP|_{x = \bar{x}}\}$. From definition, it follows that there is some $\bm g^*$ and ${\bm \alpha}^*$ such that $(\bar x, y^*, \bm{g}^*, \bm{\alpha}^*) \in \PLP$. Define $G$ as in \cref{eq:gdef}. If $g^*_i =G(g^*_{i-1})$ for each $i \in \llbracket L \rrbracket$, then we immediately conclude that there exists some $\tilde{\bm \alpha} \in \{0,1\}^L$ such that $(\bar{x},y^*,\bm g^*, \bm \tilde{\bm \alpha}) \in P^I$. This, in turn, implies that $(\hat{x},y^*) \in \QIP$.

Otherwise, take $i \in \llbracket L \rrbracket$ as the largest index such that $g^*_i > G(g^*_{i-1})$. Then, recursively define $\tilde{\bm g}$ such that $\tilde{g}_j = \begin{cases} g^*_j & j < i \\ G(\tilde{g}_{j-1}) & j \geq i \end{cases}$ for each $j \in \{0,\ldots,\L\}$. Further, take $\tilde{y} = \bar{x} - \sum_{i=1}^L 2^{-2i}\tilde{g}_i$, with $\tilde{\alpha}_i = g_{i-1}$ for all $i$, inducing only lower bounds of $0$ on all $g_j$. Then $(\tilde y, \bar x, \bm{\tilde g}, \bm {\tilde \alpha}) \in \QLP$.  We now show that $\tilde{y} < y^*$, contradicting the minimality of $y^*$.

Let $\varepsilon = \tilde{g}_i - g^*_i$.  Note that $G\colon \R \to \R$ is Lipschitz continuous with Lipschitz constant 2.  That is, for any $g,g' \in [0,1]$, we have that $|G(g) - G(g')| \leq 2|g - g'|$.
Hence, since $|g^*_i-\tilde{g}_i| \leq \varepsilon$ we conclude inductively that $|g^*_{i+k}-\tilde{g}_{i+k}| \leq 2^k\varepsilon$ for each $k \in \llbracket L-i \rrbracket$. Thus, we have
\begin{align*}
y^* - \tilde{y} &= \sum_{j=i}^L 2^{-2j} (\tilde{g}_j - g_j^*) \\
&\ge 2^{-2i}(\tilde{g}_i - g^*_i) + \sum_{j=i+1}^L 2^{-2j} |g_j^* - \tilde{g}_j| \\
&\ge 2^{-2i}\varepsilon - \textstyle \sum_{j=i+1}^L 2^{-2j} 2^{j-i} \varepsilon \\
&= 2^{-2i}\varepsilon(1 - 2^i \sum_{j=i+1}^L 2^{-j}) \\
&= 2^{-2i}\varepsilon(1 - 2^{i} (2^{-i} - 2^{-L})) \\
&= \varepsilon(2^{-i-l}) > 0.
\end{align*}
Therefore, $\tilde{y} < y^*$, contradicting the minimality of $y^*$. From this, we conclude that no such $i$ exists, completing the proof.
\end{proof}

We now show the correspondence between our formulation and the Gray code. It is helpful to note that, in $\PIP$, we have $g_i = G(g_{i-1})$, where $G$ is as defined in \cref{eq:gdef}.
\begin{theorem}[The MIP formulation follows a Gray code]\label{thm:rgc}
    Take $(x,\bm g,\bm \alpha) \in \proj_{x,\bm g, \bm \alpha}(\PIP|_{g_L \in (0,1)})$ for some fixed value $g_L \in (0,1)$. Fix some $j \in \lrbr{0,L-1}$, and define $i_j \coloneqq \floor{2^{L-j} g_j}$. Then $[\alpha_{j+1}, \dots, \alpha_L]$ is the $L-j$-bit Gray code $\bm{\alpha}^{i_j}$ for $i_j$, and $g_j \in (\tfrac{i_j}{2^{L-j}}, \tfrac{i_j+1}{2^{L-j}})$.
\end{theorem}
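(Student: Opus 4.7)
The plan is to proceed by reverse induction on $j$, starting from $j = L-1$ and decreasing to $j = 0$. The governing intuition is that the sawtooth recursion $g_{j+1} = G(g_j)$ together with the indicator bit $\alpha_{j+1}$ (which selects the active piece of $G$) mirrors exactly the reflect-and-prepend construction of the reflected Gray code in \cref{lem:gray_code_rec_def}: $\alpha_{j+1}$ will play the role of the leading bit of the $(L-j)$-bit Gray code of $i_j$, while $[\alpha_{j+2}, \ldots, \alpha_L]$ inductively furnishes the lower $(L-j-1)$-bit Gray code, possibly reflected when $\alpha_{j+1} = 1$.

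For the base case $j = L-1$, the hypothesis $g_L \in (0,1)$ combined with $g_L = G(g_{L-1})$ forces $g_{L-1} \neq 0, \tfrac12, 1$, so the implications \eqref{eqn:binary-implications} uniquely determine $\alpha_L \in \{0,1\}$. A direct check then shows $i_{L-1} = \floor{2 g_{L-1}} = \alpha_L$, which is exactly the $1$-bit Gray code of $i_{L-1}$, and $g_{L-1} \in \left(\tfrac{i_{L-1}}{2}, \tfrac{i_{L-1}+1}{2}\right)$ is immediate.

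For the inductive step, assume the claim at level $j+1$; in particular $g_{j+1} \in \left(\tfrac{i_{j+1}}{2^{L-j-1}}, \tfrac{i_{j+1}+1}{2^{L-j-1}}\right) \subseteq (0,1)$ and $[\alpha_{j+2}, \ldots, \alpha_L]$ equals the $(L-j-1)$-bit Gray code $\bm{\tilde\alpha}^{i_{j+1}}$. I would invert $g_{j+1} = G(g_j)$ using $\alpha_{j+1}$: if $\alpha_{j+1} = 0$ then $g_j = g_{j+1}/2$ and a short interval computation gives $i_j = i_{j+1} \in \lrbr{0, 2^{L-j-1}-1}$; if $\alpha_{j+1} = 1$ then $g_j = 1 - g_{j+1}/2$ and similarly $i_j = 2^{L-j} - i_{j+1} - 1 \in \lrbr{2^{L-j-1}, 2^{L-j}-1}$. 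Applying \cref{lem:gray_code_rec_def} to $\bm\alpha^{i_j}$ in each case yields $\bm\alpha^{i_j} = [\alpha_{j+1}, \bm{\tilde\alpha}^{i_{j+1}}]$, where on the $\alpha_{j+1}=1$ branch the reflected-index identity $2^{L-j} - i_j - 1 = i_{j+1}$ is exactly what the case split above provides. Invoking the inductive hypothesis $\bm{\tilde\alpha}^{i_{j+1}} = [\alpha_{j+2}, \ldots, \alpha_L]$ then closes the induction.

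The main subtlety is maintaining the \emph{strict} interval containment $g_j \in \left(\tfrac{i_j}{2^{L-j}}, \tfrac{i_j+1}{2^{L-j}}\right)$ as we move down, which is what ensures $g_j \neq \tfrac12$, so that $\alpha_{j+1}$ is uniquely determined and the floor $\floor{2^{L-j}g_j}$ is unambiguous. This propagates cleanly because the open containment at level $j+1$ implies $g_{j+1} \in (0,1)$, which under either branch of the inverse sawtooth sends $g_j$ strictly into $(0, \tfrac12)$ or $(\tfrac12, 1)$; hence openness is preserved at level $j$ and the induction goes through without complication.
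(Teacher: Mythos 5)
Your proposal is correct and follows essentially the same route as the paper's proof: a reverse induction on $j$ that inverts the sawtooth relation via $g_j = \tfrac12 g_{j+1}$ or $g_j = 1 - \tfrac12 g_{j+1}$ depending on $\alpha_{j+1}$, tracks the open interval containing $g_j$ to identify $i_j$ as either $i_{j+1}$ or $2^{L-j}-i_{j+1}-1$, and closes the induction with \cref{lem:gray_code_rec_def}. Your explicit remark on why the strict containment (hence $g_j \neq \tfrac12$ and the well-definedness of $\alpha_{j+1}$) propagates is a point the paper leaves implicit, but the argument is otherwise the same.
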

\begin{proof}
    First, we note that, for each $j \le L+1$, we have
    $$
        g_{j} = \begin{cases}
            \tfrac{1}{2}g_{j+1} & \alpha_{j+1} = 0,\\
            1-\tfrac{1}{2}g_{j+1} & \alpha_{j+1} = 1.
        \end{cases}
    $$
    
    We will proceed by induction in a manner similar to the proof for \cref{lem:gcode_res}. 
    
    \textbf{Base Case:} $j=L-1$
    
    In this case, if $\alpha_L = 0$, then $g_{L-1} = \tfrac{1}{2} g_L \in (0,\tfrac{1}{2})$, and so $i_{L-1} = \floor{2 g_{L-1}} = 0$, whose 1-bit Gray code is $[0] = [\alpha_L]$, as desired. On the other hand, if $\alpha_L = 1$, then $g_{L-1} = 1-\tfrac{1}{2} g_L \in (\tfrac{1}{2},1)$, and so $i_{L-1} = \floor{2 g_{L-1}} = 1$, whose 1-bit Gray code is $[1] = [\alpha_L]$, as desired.
    
    \textbf{Inductive step}: Let $j \in \lrbr{0,L-2}$ and suppose the  statement holds $j+1$. 
    
    If $\alpha_{j+1} = 0$, then $g_j = \tfrac{1}{2} g_{j+1} \in \left(\tfrac{i_{j+1}}{2^{L-j}},\tfrac{i_{j+1}+1}{2^{L-j}}\right)$ and $2^{L-j}g_j \in (i_{j+1},i_{j+1}+1)$. In this case, we have $i_j = \floor{2^{L-j} g_j} = i_{j+1} \in \lrbr{0,2^{L-j-1}-1}$. Thus, by \cref{lem:gray_code_rec_def}, we have that the $L-j$-bit Gray code for $i_j$ is given by $\bm \alpha^{i_j} = [0 ~ \bm \tilde{\bm \alpha}^{i_{j+1}}]$, where $\tilde{\bm \alpha}^{i_j}$ is the ($L-j-1$)-bit Gray code for $i_j$, which is $[\alpha_{j+2} \dots \alpha_L]$ by the induction hypothesis. This yields $\bm \alpha^{i_j} = [0 ~ \alpha_{j+2} \dots \alpha_L] = [\alpha_{j+1} \dots \alpha_L]$ as desired. Further, observing the bounds we derived for $g_j$, we have $g_j \in (\tfrac{i_j}{2^{L-j}},\tfrac{i_j+1}{2^{L-j}})$.
    
    On the other hand, if $\alpha_{j+1} = 1$, then $g_j = 1-\tfrac{1}{2} g_{j+1} \in (1-\tfrac{i_{j+1}+1}{2^{L-j}},1-\tfrac{i_{j+1}}{2^{L-j}})$ and $2^{L-j}g_j \in (2^{L-j}-i_{j+1}-1,2^{L-j}-i_{j+1})$. Thus, we have that $i_j = 2^{L-j} - i_{j+1} - 1 \in \lrbr{2^{L-j-1}, 2^{L-j}-1}$. Thus, by \cref{lem:gray_code_rec_def}, computing $\tilde{i}_j = 2^{L-j}-1 - i_j = i_{j+1}$, we have that the ($L-j$)-bit Gray code for $i_j$ is $\bm \alpha^{i_j} = [1 ~ \tilde{\bm \alpha}^{\tilde{i}_j}]$, where $\tilde{\bm \alpha}^{\tilde{i}_j}$ is the ($L-j-1$)-bit Gray code for $\tilde{i}_j$, which by the induction hypothesis is given as $[\alpha_{j+2} \dots \alpha_L]$. This yields $\bm \alpha^{i_j} = [1 ~ \alpha_{j+2} \dots \alpha_L] = [\alpha_{j+1} \dots \alpha_L]$ as desired. Further, observing the bounds we derived for $g_j$, we have $g_j \in (\tfrac{i_j}{2^{L-j}},\tfrac{i_j+1}{2^{L-j}})$.
    
    Thus, with these cases, the result holds by induction.
\end{proof}

Note that, if $g_L \in \{0,1\}$ (so that $x \in 2^{-L} \Z \cap (0,1)$), the same Gray codes from \cref{thm:rgc} can be used as when $g_L \in (0,1)$. The primary difference is that there two choices for this Gray code when $x \in 2^{-L} \Z \cap (0,1)$. This dichotomy stems from the fact that we will obtain $g_j = \tfrac{1}{2}$ from some $j$, introducing ambiguity in the choice of $\alpha_j$.

\subsection{Hereditary Sharpness}\label{ssec:hsharp}
In this section, we prove hereditary sharpness of the formulation $\PIP$. 

Let $L$ be a nonnegative integer, and define the sets $\PIP, \PLP$, $\QIP$, and $\QLP$ as before. Suppose we fix some subset of the binary variables to $\bm \alpha_I = \bm{\bar \alpha}_I \in \{0,1\}^{|I|}$ for some set $I \subseteq L$. Furthermore, define $\PIP_{\bm{\bar \alpha}_I} \defeq \PIP|_{\bm \alpha_I = \bm{\bar \alpha}_I}\defeq \{(x,y,\bm g, \bm \alpha) \in \PIP |  \alpha_i = \bar \alpha_i \text{ for } i \in I\}$, and similarly for $\QLP$, $\QIP$, and $\PLP$. We then wish to show $\QLP_{\bm{\bar \alpha}_I} = \conv(\QIP_{\bm{\bar \alpha}_I})$. We will use this notation for the remainder of this section. An example demonstrating the hereditary sharpness of the formulation is shown in \cref{fig:sharpness}

To study this relationship in more detail, we in particular wish to study $\PLP_{\bm{\bar \alpha}_I}$. Let $(x, y, \bm g, \bm \alpha) \in \PLP_{\bm{\bar \alpha}_I}$. For each $i \in I$, then $g_{i-1}$ relates to $g_i$ via the linear function
\begin{equation}\label{eq:alpha-fix-constr}
    g_i = 2g_{i-1} (1-\bar{\alpha}_i) + 2(1-g_{i-1}) \bar{\alpha}_i.
\end{equation}
Alternatively, for each $i \in \llbracket L \rrbracket \backslash I$, the relationship can be written as 
$$
         2|g_{i-1} - \alpha_i| \le g_i \le \min\{2g_{i-1}, 2(1-g_{i-1})\}.
$$
In this form, simply setting each $\alpha_i = g_{i-1}$ yields the least restrictive possible lower-bound of $0$ on $g_i$ in terms of $g_{i-1}$. Thus, making this choice, we find that $\proj_{x,y,\bm g}(\PLP_{\bm{\bar \alpha}_I})$ can be expressed via the constraints
\begin{equation}
    \begin{array}{rll}
         y &= g_0 - \dsum_{i=1}^L 2^{-2i} g_i\\
         g_0 &= x\\
         g_i &\le 2g_{i-1} & i \in \lrbr{L}, i \notin I\\
         g_i &\le 2(1-g_{i-1}) & i \in \lrbr{L}, i \notin I\\
         g_i &= 2g_{i-1} (1-\bar{\alpha}_i) + 2(1-g_{i-1}) \bar{\alpha}_i & i \in I\\
         g_i &\in [0,1] & i \in \lrbr{L}
    \end{array}
    \label{eq:NN_ck1}
\end{equation}
while, after combining the linear constraints with the new constraints \cref{eq:alpha-fix-constr} fixing variables $\alpha_i$ for $i \in I$, $\PIP_{\bm{\bar \alpha}_I}$ can be written as
\begin{equation}
    \begin{array}{rll}
         y &= g_0 - \dsum_{i=1}^L 2^{-2i} g_i\\
         g_0 &= x\\
         2(g_{i-1} - \alpha_i) & \le g_i \le 2g_{i-1} & i \in \lrbr{L}, i \notin I\\
         2(\alpha_i - g_{i-1}) & \le g_i \le 2(1-g_{i-1}) & i \in \lrbr{L}, i \notin I\\
         g_i &= 2g_{i-1} (1-\bar{\alpha}_i) + 2(1-g_{i-1}) \bar{\alpha}_i & i \in I\\
         g_i &\in [0,1] & i \in \lrbr{L}\\
         \alpha_i &\in \{0,1\} & i \in \lrbr{L}, i \notin I
    \end{array}
    \label{eq:NN_ck2}
\end{equation}
For convenience, for all $i \in I$, define 
\begin{equation}
G_i(g_{i-1}, \alpha_i) \defeq 2g_{i-1} (1-\alpha_i) + 2(1-g_{i-1}) \alpha_i.
\end{equation}
For $i \in \lrbr{L}$, define the shorthand $G_i(g_{i-1}) \defeq G_i(g_{i-1}, \bar \alpha_i)$ if $i \in I$ and $G_i = G$ otherwise. Then by the construction of $\PIP_{\bm{\bar \alpha}_I}$, we have that $\bm g \in \proj_{\bm g} (\PIP_{\bm{\bar \alpha}_I})$ if and only if for all $i \in \lrbr{L}$, we have $g_i = G_i(g_{i-1})$ and $g_i \in [0,1]$.

\sloppy For all $i \in \lrbr{0,L}$, the below proposition explores how to compute the feasible region for $g_i$, $\proj_{g_i}(\PLP_{\bm{\bar \alpha}_I}) \eqdef [a_i,b_i]$, while establishing that $\proj_{g_i}(\PLP_{\bm{\bar \alpha}_I}) = \conv(\proj_{g_i}(\PIP_{\bm{\bar \alpha}_I}))$. 

\begin{lemma}[Bounds in Projection] \label{lem:bndcomp}
     For all $i \in \lrbr{0,L}$ and $I \subseteq \llbracket L \rrbracket$, we have $\proj_{g_i}(\PLP_{\bm{\bar \alpha}_I}) = \conv(\proj_{g_i}(\PIP_{\bm{\bar \alpha}_I})) \eqdef [a_i,b_i] \neq \emptyset$. Furthermore, $[a_L,b_L] = [0,1]$, and $[a_{i-1},b_{i-1}]$ can be computed from $[a_i,b_i]$ as
        \begin{equation}
        [a_{i-1}, b_{i-1}] = 
        \begin{cases}
            [\tfrac{1}{2} a_i,\tfrac{1}{2} b_i] & \text{if $i \in I$ and $\bar{\alpha}_i = 0$},\\
            [1-\tfrac{1}{2} b_i, 1-\tfrac{1}{2}a_i] & \text{if $i \in I$ and $\bar{\alpha}_i = 1$},\\
            [\tfrac{1}{2} a_i, 1-\tfrac{1}{2}a_i] & \text{if $i \notin I$}.
        \end{cases}
    \end{equation}
 Note that in the last case $a_i \le \tfrac{1}{2}$ and $b_i \ge \tfrac{1}{2}$.
\end{lemma}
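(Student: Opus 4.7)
I would prove the lemma by downward induction on $i$, from $i=L$ to $i=0$, establishing simultaneously that $\proj_{g_i}(\PLP_{\bm{\bar \alpha}_I}) = \conv(\proj_{g_i}(\PIP_{\bm{\bar \alpha}_I})) = [a_i,b_i] \neq \emptyset$, with $[a_L,b_L]=[0,1]$ at the base and the stated recursion for smaller $i$. The central observation that decouples the argument is that the constraints in \cref{eq:NN_ck1} and \cref{eq:NN_ck2} only couple adjacent indices $g_{j-1},g_j$: once one knows that any value $g_{i-1}\in[0,1]$ can be extended to a valid backward chain $g_0=x,g_1,\dots,g_{i-2}$ with $x\in[0,1]$, the projection at level $i-1$ is controlled entirely by which values admit some $g_i\in[a_i,b_i]$ compatible with the level-$i$ relation.

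\textbf{Base case and the chain-extension property.} For $i=L$, the inclusion $[a_L,b_L]\subseteq[0,1]$ is immediate from the variable bound $g_L\in[0,1]$. For the reverse, and to supply the chain-extension property used later, I would build, given any $c\in[0,1]$, a feasible IP point by iteratively setting $g_{j-1}=g_j/2$ (taking $\alpha_j=0$) when $j\notin I$ or $\bar{\alpha}_j=0$, and $g_{j-1}=1-g_j/2$ when $j\in I$ with $\bar{\alpha}_j=1$, starting from $g_L=c$. A trivial induction yields $g_{j-1}\in[0,1]$ throughout, and taking $x=g_0$ produces a valid member of $\PIP_{\bm{\bar \alpha}_I}\subseteq\PLP_{\bm{\bar \alpha}_I}$. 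This same construction, started from any $g_{i-1}\in[0,1]$, gives the chain-extension property needed in the inductive step.

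\textbf{Inductive step.} Assume the claim at index $i$. When $i\in I$, the relation $g_i=G_i(g_{i-1},\bar{\alpha}_i)$ is an affine bijection, so both projections onto $g_{i-1}$ coincide and equal $[a_i/2,b_i/2]$ (if $\bar{\alpha}_i=0$) or $[1-b_i/2,1-a_i/2]$ (if $\bar{\alpha}_i=1$). When $i\notin I$, for the LP I would set $\alpha_i=g_{i-1}$, which collapses the lower-bound inequalities $g_i\ge 2(g_{i-1}-\alpha_i)$ and $g_i\ge 2(\alpha_i-g_{i-1})$ to $g_i\ge 0$; then $g_i$ can range over $[0,\min\{2g_{i-1},2(1-g_{i-1})\}]$, which meets $[a_i,b_i]$ iff $a_i\le\min\{2g_{i-1},2(1-g_{i-1})\}$, i.e. $g_{i-1}\in[a_i/2,1-a_i/2]$. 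For the IP, the binary $\alpha_i\in\{0,1\}$ forces either $g_i=2g_{i-1}$ with the implied bound $g_{i-1}\le 1/2$, or $g_i=2(1-g_{i-1})$ with $g_{i-1}\ge 1/2$; combined with $g_i\in[a_i,b_i]\subseteq[0,1]$, this yields $[a_i/2,b_i/2]\cup[1-b_i/2,1-a_i/2]$, whose convex hull is $[a_i/2,1-a_i/2]$. Nonemptiness and containment in $[0,1]$ follow from $a_i\le b_i\le 1$, and the parenthetical bound $a_{i-1}\le 1/2\le b_{i-1}$ for the last case is immediate from $a_i\le 1$ applied to the output endpoints $a_i/2$ and $1-a_i/2$.

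\textbf{Main obstacle.} The delicate step is the $i\notin I$ case of the IP projection: when $b_i<1$ the two pieces $[a_i/2,b_i/2]$ and $[1-b_i/2,1-a_i/2]$ are disjoint with a genuine gap, and one must verify both that each piece is fully attained in the IP projection (using the implied halfway bounds on $g_{i-1}$, with $g_i=a_i$ and $g_i=b_i$ realizing the endpoints of each piece) and that their convex hull is nonetheless the contiguous LP interval $[a_i/2,1-a_i/2]$, the extra length being realized in the LP by the freedom in choosing $\alpha_i\in[0,1]$ to drive the lower bounds on $g_i$ down to $0$. Everything else reduces to a routine case check once the backward chain-extension has been established.
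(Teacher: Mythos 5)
Your proposal is correct and follows essentially the same downward induction and three-case analysis ($i\in I$ with $\bar\alpha_i=0$, $i\in I$ with $\bar\alpha_i=1$, $i\notin I$) as the paper's proof; the only notable difference is that you establish the base case $[a_L,b_L]=[0,1]$ and the backward chain-extension by an explicit construction, whereas the paper obtains the base case by invoking \cref{thm:rgc}. One small imprecision worth noting: in the $i\notin I$ case, the IP projection onto $g_{i-1}$ is in general only a subset of $[\tfrac{1}{2}a_i,\tfrac{1}{2}b_i]\cup[1-\tfrac{1}{2}b_i,1-\tfrac{1}{2}a_i]$ rather than equal to it (since $\proj_{g_i}(\PIP_{\bm{\bar \alpha}_I})$ need not be all of $[a_i,b_i]$), but your argument only needs that the extreme endpoints $\tfrac{1}{2}a_i$ and $1-\tfrac{1}{2}a_i$ are attained via $g_i=a_i$, so the conclusion is unaffected.
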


\begin{proof}
    We proceed by induction.
    
    \textbf{Base Case}: $i=L$
    
    In this case, \cref{thm:rgc} establishes that, even if $\hat{I} = \lrbr{L}$ with some corresponding $\bm{\hat{\alpha}}_{\hat{I}} \in \{0,1\}^L$, we have $[0,1] = \proj_{g_i}(\PIP_{\bm{\hat{\alpha}}_{\hat{I}}})$, yielding
    $$\begin{array}{rl} [0,1] &= \proj_{g_L}(\PIP_{\bm{\hat{\alpha}}_{\hat{I}}}) \subseteq \conv(\proj_{g_L}(\PIP_{\bm{\hat{\alpha}}_{\hat{I}}})) \\
    &\subseteq \conv(\proj_{g_L}(\PIP_{\bm{\bar{\alpha}}_I})) \subseteq  \proj_{g_L}(\PLP_{\bm{\bar{\alpha}}_I}) \subseteq [0,1],
    \end{array}$$
    and so $\conv(\proj_{g_L}(\PIP_{\bm{\bar \alpha}_I})) = \proj_{g_L}(\PLP_{\bm{\bar \alpha}_I}) = [0,1]$, as required.
    
    \textbf{Inductive step}: 
    
    Let $i \in \lrbr{L}$, and suppose that $\conv(\proj_{g_i}(\PIP_{\bm{\bar \alpha}_I})) = \proj_{g_i}(\PLP_{\bm{\bar \alpha}_I}) = [a_i,b_i]$. Then, observing \cref{eq:NN_ck1} and \cref{eq:NN_ck2}, we find that there are three cases.
    
    \textit{Case 1}: If $i \in I$ and $\bar \alpha_i = 0$, then in both $\PLP_{\bm{\bar \alpha}_I}$ and $\PIP_{\bm{\bar \alpha}_I}$, we have $g_{i-1} = \tfrac{1}{2} g_i$, yielding 
    $$
    \conv(\proj_{g_{i-1}}(\PIP_{\bm{\bar \alpha}_I})) \subseteq \proj_{g_{i-1}}(\PLP_{\bm{\bar \alpha}_I}) = [\tfrac{1}{2} a_i,\tfrac{1}{2} b_i].
    $$
    Furthermore, $a_i,b_i \in \proj_{g_i}(\PIP_{\bm{\bar \alpha}_I})$ yields $\tfrac{1}{2} a_i, \tfrac{1}{2} b_i \in \proj_{g_{i-1}}(\PIP_{\bm{\bar \alpha}_I})$. This implies $[\tfrac{1}{2} a_i, \tfrac{1}{2}b_i] \subseteq \conv(\proj_{g_{i-1}}(\PIP_{\bm{\bar \alpha}_I}))$, yielding 
    $$
    \begin{array}{rl}
        \conv(\proj_{g_{i-1}}(\PIP_{\bm{\bar \alpha}_I})) = \proj_{g_{i-1}}(\PLP_{\bm{\bar \alpha}_I}) = [1-\tfrac{1}{2} a_i, 1\tfrac{1}{2}b_i]
    \end{array}
    $$
    Note that $[a_i,b_i] \neq \emptyset \Rightarrow [\tfrac{1}{2} a_i, \tfrac{1}{2}b_i]\neq \emptyset$, and that $[a_i,b_i] \subseteq [0,1] \Rightarrow [\tfrac{1}{2} a_i, \tfrac{1}{2}b_i] \subseteq [0,1]$. 
    
    \textit{Case 2}: If $i \in I$ and $\bar \alpha_i = 1$, then in both $\PLP_{\bm{\bar \alpha}_I}$ and $\PIP_{\bm{\bar \alpha}_I}$, we have $g_i = 1-\tfrac{1}{2} g_i$, yielding
    $$\conv(\proj_{g_{i-1}}(\PIP_{\bm{\bar \alpha}_I})) \subseteq \proj_{g_{i-1}}(\PLP_{\bm{\bar \alpha}_I}) = [1-\tfrac{1}{2} b_i,1-\tfrac{1}{2} a_i].$$
    Furthermore, $a_i,b_i \in \proj_{g_i}(\PIP_{\bm{\bar \alpha}_I})$ yields $1-\tfrac{1}{2} b_i, 1-\tfrac{1}{2} a_i \in \proj_{g_{i-1}}(\PIP_{\bm{\bar \alpha}_I})$. This implies $[1-\tfrac{1}{2} b_i, 1-\tfrac{1}{2}a_i] \subseteq \conv(\proj_{g_{i-1}}(\PIP_{\bm{\bar \alpha}_I}))$, yielding 
    $$
    \begin{array}{rl}
        \conv(\proj_{g_{i-1}}(\PIP_{\bm{\bar \alpha}_I})) = \proj_{g_{i-1}}(\PLP_{\bm{\bar \alpha}_I}) = [1-\tfrac{1}{2} b_i, 1-\tfrac{1}{2}a_i].
    \end{array}
    $$
    Note that $[a_i,b_i] \neq \emptyset \Rightarrow [1-\tfrac{1}{2} b_i, 1-\tfrac{1}{2}a_i]\neq \emptyset$, and $[a_i,b_i] \subseteq [0,1] \Rightarrow [1-\tfrac{1}{2} b_i, 1-\tfrac{1}{2}a_i] \subseteq [0,1]$.
    
    \textit{Case 3}: If $i \notin I$, then $\PIP_{\bm{\bar \alpha}_I}$ and $\PLP_{\bm{\bar \alpha}_I}$ model different relations between $g_i$ and $g_{i-1}$. 
    
    In $\PLP_{\bm{\bar \alpha}_I}$, we have
    $$
        \begin{array}{rl}
             g_i &\le 2g_{i-1}\\
             g_i &\le 2(1-g_{i-1}),
        \end{array}
    $$
    which can be written as
    $$
        \begin{array}{rl}
             g_{i-1} &\ge \tfrac{1}{2} g_i \ge \tfrac{1}{2} a_i\\
             g_{i-1} &\le 1 - \tfrac{1}{2} g_i \le 1 - \tfrac{1}{2} a_i.
        \end{array}
    $$
    Further, as $a_i \in \proj_{g_i}(\PLP_{\bm{\bar \alpha}_I})$, we have that 
    $$\proj_{g_{i-1}}(\PLP_{\bm{\bar \alpha}_I}) = [\tfrac{1}{2} a_i,1 - \tfrac{1}{2} a_i]),$$ 
    where $a_i \in [0,1]$ implies $\tfrac{1}{2} a_i \in [0,\tfrac{1}{2}]$, so that $[\tfrac{1}{2} a_i,1 - \tfrac{1}{2} a_i] \neq \emptyset$. Thus, 
    $$\conv(\proj_{g_{i-1}}(\PIP_{\bm{\bar \alpha}_I})) \subseteq \proj_{g_{i-1}}(\PLP_{\bm{\bar \alpha}_I}) = [\tfrac{1}{2} a_i,1 - \tfrac{1}{2} a_i].$$
    
    To show $[\tfrac{1}{2} a_i,1 - \tfrac{1}{2} a_i] \subseteq \conv(\proj_{g_{i-1}}(\PIP_{\bm{\bar \alpha}_I})$, we have only to show that the endpoints are in $\proj_{g_{i-1}}(\PIP_{\bm{\bar \alpha}_I})$. To this end, let $g_i = a_i$. In $\PIP_{\bm{\bar \alpha}_I}$, we can choose either $\alpha_i = 0$ or $\alpha_i=1$. If $\alpha_i = 0$, we have that $g_{i-1} = \tfrac{1}{2} g_i = \tfrac{1}{2} a_i$, while if $\alpha_i = 1$, we have $g_{i-1} = 1-\tfrac{1}{2} g_i = 1-\tfrac{1}{2} a_i$. Thus, we have $[\tfrac{1}{2} a_i,1 - \tfrac{1}{2} a_i] \subseteq \conv(\proj_{g_{i-1}}(\PIP_{\bm{\bar \alpha}_I}))$, yielding 
    $$
    \begin{array}{rl}
        \conv(\proj_{g_{i-1}}(\PIP_{\bm{\bar \alpha}_I})) = \proj_{g_{i-1}}(\PLP_{\bm{\bar \alpha}_I}) = [\tfrac{1}{2} a_i,1 - \tfrac{1}{2} a_i] \neq \emptyset,
    \end{array}
    $$
    as desired. 
\end{proof}

Note that $\conv(\proj_{x}(\PIP_{\bm{\bar \alpha}_I})) = \proj_{x}(\PLP_{\bm{\bar \alpha}_I})$ is a direct corollary of the above lemma. This fact will be helpful during the proof of hereditary sharpness. Next, to prove hereditary sharpness, we want to show that if we fix some $g_i$ to either $a_i$ or $b_i$, then for each $j>i$, we have that $g_j$ has only one feasible solution $e_j$ in $\PLP_{\bm{\bar \alpha}_I}$, where $e_j \in \{a_i,b_i\}$.

\begin{lemma}[Single solution at endpoints] \label{lem:unique_bnd_sln}
    For all $i \in \lrbr{0,L}$ and $\hat{g_i} \in \{a_i,b_i\}$, we have for all $j \ge i$ that $\proj_{g_j}(\PLP_{\bm{\bar \alpha}_I}|_{g_i=\hat{g}_i}) = \{e_j\}$, where $e_j \in \{a_j,b_j\}$.
\end{lemma}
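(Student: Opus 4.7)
The plan is to proceed by forward induction on $j$, starting from $j=i$. The base case $j=i$ is immediate: the constraint $g_i=\hat g_i$ forces $\proj_{g_i}(\PLP_{\bm{\bar\alpha}_I}|_{g_i=\hat g_i})=\{\hat g_i\}$, and $\hat g_i\in\{a_i,b_i\}$ by hypothesis, so we take $e_i=\hat g_i$. For the inductive step, we assume $\proj_{g_j}(\PLP_{\bm{\bar\alpha}_I}|_{g_i=\hat g_i})=\{e_j\}$ with $e_j\in\{a_j,b_j\}$, and argue that $g_{j+1}$ is likewise pinned to a single endpoint of $[a_{j+1},b_{j+1}]$. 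Throughout, we will invert the three cases of \cref{lem:bndcomp} applied at index $j+1$ to express $a_{j+1}$ and $b_{j+1}$ in terms of $a_j$ and $b_j$.

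When $j+1\in I$ with $\bar\alpha_{j+1}=0$, \cref{eq:alpha-fix-constr} yields the equality $g_{j+1}=2 e_j$; inverting Case~1 of \cref{lem:bndcomp} gives $a_{j+1}=2 a_j$ and $b_{j+1}=2 b_j$, so $e_{j+1}\defeq 2 e_j\in\{a_{j+1},b_{j+1}\}$. When $j+1\in I$ with $\bar\alpha_{j+1}=1$, analogously $g_{j+1}=2(1-e_j)$ and Case~2 gives $a_{j+1}=2(1-b_j)$ and $b_{j+1}=2(1-a_j)$, so $e_{j+1}\defeq 2(1-e_j)\in\{a_{j+1},b_{j+1}\}$. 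In both of these cases the equality constraint uniquely determines $g_{j+1}$, and nonemptiness of the restricted projection follows from $\hat g_i\in[a_i,b_i]=\proj_{g_i}(\PLP_{\bm{\bar\alpha}_I})$, which guarantees that $\PLP_{\bm{\bar\alpha}_I}|_{g_i=\hat g_i}$ is itself nonempty.

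The main obstacle is the remaining case $j+1\notin I$, where there is no equality tying $g_{j+1}$ to $g_j$, only the two upper-bound inequalities in \cref{eq:NN_ck1}. Here Case~3 of \cref{lem:bndcomp} yields $a_j=a_{j+1}/2$ and $b_j=1-a_{j+1}/2$, hence $a_j\le\tfrac{1}{2}\le b_j$ and $a_j+b_j=1$. The LP constraints give $g_{j+1}\le\min\{2 e_j,\,2(1-e_j)\}$. If $e_j=a_j$, then $e_j\le\tfrac{1}{2}$ and this minimum equals $2 a_j=a_{j+1}$; if $e_j=b_j$, then $e_j\ge\tfrac{1}{2}$ and the minimum equals $2(1-b_j)=a_{j+1}$. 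In either subcase, $g_{j+1}\le a_{j+1}$. Combining this upper bound with the inclusion $\proj_{g_{j+1}}(\PLP_{\bm{\bar\alpha}_I}|_{g_i=\hat g_i})\subseteq\proj_{g_{j+1}}(\PLP_{\bm{\bar\alpha}_I})=[a_{j+1},b_{j+1}]$ from \cref{lem:bndcomp} and the nonemptiness argument above, the restricted projection equals $\{a_{j+1}\}$, so we may take $e_{j+1}\defeq a_{j+1}$, completing the induction.
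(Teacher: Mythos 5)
Your proposal is correct and follows essentially the same route as the paper's proof: forward induction on the index with the base case pinned by the restriction $g_i=\hat g_i$, the two fixed-bit cases handled by the equality \cref{eq:alpha-fix-constr} together with the corresponding cases of \cref{lem:bndcomp}, and the unfixed case resolved by showing $\min\{2e_j,2(1-e_j)\}=a_{j+1}$ for either endpoint so that the upper bounds collapse onto the lower bound $a_{j+1}$. The only cosmetic differences are the index shift ($j\to j+1$ versus the paper's $j-1\to j$) and your explicit remarks on nonemptiness and $a_j+b_j=1$, which the paper leaves implicit.
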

\begin{proof}
    We will proceed by induction. Let $\hat{g_i} \in \{a_0,b_0\}$. Fortunately, the base case is trivial, since we have chosen $g_i \in \{a_i,b_i\}$.
    
Thus, for an induction, let $j \in \lrbr{i+1,L}$, and assume that $\proj_{g_{j-1}}(\PLP_{\bm{\bar \alpha}_I}|_{g_i = \hat{g}_i}) = \{e_{j-1}\} \subset \{a_{j-1},b_{j-1}\}$. Then there are three cases.
    
    \textbf{Case 1}: If $j \in I$ and $\bar{\alpha}_{j} = 0$, then we have $g_{j} = 2 g_{j-1}$, so that  $\proj_{g_j}(\PLP_{\bm{\bar \alpha}_I}|_{g_i = \hat{g}_i}) = \{2 e_{j-1}\} \eqdef \{e_j\}$. Now, by \cref{lem:bndcomp}, we have that $a_{j} = 2 a_{j-1}$ and $b_{j} = 2 b_{j-1}$, so that $e_{j-1} \in \{a_{j-1},b_{j-1}\} \Rightarrow e_j \in \{a_j,b_j\}$.
    
    \textbf{Case 2}: If $j \in I$ and $\bar{\alpha}_{j} = 1$, then we have $g_{j} = 2 (1-g_{j-1})$, so that $\proj_{g_j}(\PLP_{\bm{\bar \alpha}_I}|_{g_j = \hat{g}_j}) = \{2 (1-e_{j-1})\} \eqdef \{e_j\}$. Now, by \cref{lem:bndcomp}, we have that $a_{j} = 2 (1-b_{j-1})$ and $b_{j} = 2 (1-a_{j-1})$, so that $e_{j-1} \in \{a_{j-1},b_{j-1}\} \Rightarrow e_j \in \{a_j,b_j\}$.
    
    \textbf{Case 3}: If $j \notin I$, then we have by \cref{lem:bndcomp} $a_{j} = 2a_{j-1} = 2(1-b_{j-1})$, with $a_j \le \tfrac{1}{2}$ and $b_j \ge \tfrac{1}{2}$. Further, we have in $\PLP_{\bm{\bar \alpha}_I}$ that
    $$
        \begin{array}{cc}
            g_{j} \le 2 g_{j-1} = 2 e_{j-1},\\
            g_{j} \le 2 (1-g_{j-1}) = 2 (1-e_{j-1}).
        \end{array}
    $$
    Now, if $e_{j-1} = a_{j-1}$, then $2 e_{j-1} = 2 a_{j-1} \le 1$, while $2(1-e_{j-1}) = 2(1-a_{j-1}) \ge 1$. Thus, these bounds consolidate to $a_j \le g_j \le 2 a_{j-1} = a_j$, which implies $g_j = a_j$. 
    
    On the other hand, if $e_{j-1} = b_{j-1}$, then $2 e_{j-1} = 2 b_{j-1} \ge 1$, while $2(1-e_{j-1}) = 2(1-b_{j-1}) \le 1$. Thus, these bounds consolidate to $a_j \le g_j \le 2(1-b_{j-1}) = a_j$, which implies $g_j = a_j$.
    Thus, in this case, we have $\proj_{g_j}(\PLP_{\bm{\bar \alpha}_I}|_{g_i = \hat{g}_i}) = \{a_j\} \eqdef\{e_j\}$. This completes the proof. 
\end{proof}

Now, computing all $a_{i}$ and $b_{i}$ via \cref{lem:bndcomp}, we obtain the following form for $\proj_{x,y,\bm g}\{\PLP|_{I, \bm{\bar \alpha}_I}\}$:
\begin{equation}
    \begin{array}{rll}
         y &= g_0 - \dsum_{i=1}^L 2^{-2i} g_i\\
         g_0 &= x\\
         g_i &\le 2g_{i-1} & i \in \lrbr{L} \backslash I \\
         g_i &\le 2(1-g_{i-1}) & i \in \lrbr{L} \backslash I\\
         g_i &= 2g_{i-1} (1-\bar{\alpha}_i) + 2(1-g_{i-1}) \bar{\alpha}_i & i \in I\\
         g_i &\in [a_i,b_i] & i \in \lrbr{L}.
    \end{array}
    \label{eq:NN_ck3}
\end{equation}
Now, note that, for each $i \in \lrbr{L}$, $g_{i}$ has negative coefficient in first equation of \eqref{eq:NN_ck3}. Note also that this is the only constraint in \eqref{eq:NN_ck3} involving $y$. Thus, if we are to minimize over $y$, then we implicitly maximize $g_{i}$, and so $g_{i}$ will tend towards its upper bound. Furthermore, it turns out that, in any $y$-minimal or $y$-maximal solution in $\PLP_{\bm{\bar \alpha}_I}$ given some fixed value of $x$, each $g_i$ can be explicitly computed from $g_{i-1}$ using only the constraints from \cref{eq:NN_ck3} directly connecting $g_i$ and $g_{i-1}$. We refer to this as the \emph{greedy} solution property described in \cref{lem:greedy} below, and it holds due to the rapid decay of coefficients of $g_i$'s. 

For each $i \in \lrbr{L} \setminus I$, let $b_i$ be as in \cref{lem:bndcomp}, and define
\begin{equation}
    u_i(g_{i-1}) \coloneqq \min\{b_i, 2 g_{i-1}, 2(1-g_{i-1})\}. \label{eq:uidef}
\end{equation}
\begin{lemma} \label{lem:greedy}
Let $a_i, b_i$ as in \cref{lem:bndcomp}, and let $\hat x \in [a_0,b_0]$. Define
\begin{subequations}
\begin{align}
 (y^{*}, \bm g^{*}) &=  \argmin \{y     \,  : \,      (y,\bm g) \in \proj_{y,\bm g}(\PLP_{I, \bm{\bar \alpha}_I}|_{x=\hat{x}}) \}, \label{eq:greedy_min_opt}\\
  (y^\star, \bm g^\star) &=  \argmax \{y       \,  : \,       (y,\bm g) \in \proj_{y,\bm g}(\PLP_{I, \bm{\bar \alpha}_I}|_{x=\hat{x}})\}.
\end{align}
\end{subequations}
Then, for $i \notin I$, we have
\begin{align*}
  g^{*}_i &= u_i(g_{i-1}) \\
  g^\star_i &=  a_i.
\end{align*}
That is, one of the upper bounds is tight for each $g_i$ when maximizing $y$, while the domain lower bound is tight for $g_i$ when minimizing $y$. 
\end{lemma}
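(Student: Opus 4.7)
The plan is to reduce to a Lipschitz estimate in the spirit of the proof of \cref{thm:sharp}. Since $g_0 = \hat x$ is fixed, minimizing $y$ is equivalent to maximizing $W(\bm g) \defeq \sum_{i=1}^L 2^{-2i} g_i$ over the projection $\proj_{\bm g}(\PLP_{I, \bm{\bar \alpha}_I}|_{x=\hat{x}})$, and maximizing $y$ is equivalent to minimizing $W$. The strategy is to assume the greedy rule fails at some smallest index $j \notin I$, replace $g_j$ by the greedy value, propagate the change downstream in a feasibility-preserving way, and use the rapid decay of the coefficients $2^{-2i}$ to show the perturbed point is strictly better.

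For the maximum-$y$ case, take an optimal $(y^\star, \bm g^\star)$ and suppose toward contradiction that $j \in \llbracket L \rrbracket \setminus I$ is the smallest index with $g_j^\star > a_j$; set $\varepsilon \defeq g_j^\star - a_j > 0$. Construct $\tilde{\bm g}$ by copying $\tilde g_i = g_i^\star$ for $i < j$, setting $\tilde g_j = a_j$, and, for $i > j$, using $\tilde g_i = G_i(\tilde g_{i-1})$ when $i \in I$ and $\tilde g_i = \min\{g_i^\star, u_i(\tilde g_{i-1})\}$ when $i \notin I$. Feasibility follows from \cref{lem:bndcomp}: in particular $u_i(\tilde g_{i-1}) \geq a_i$ whenever $\tilde g_{i-1} \in [a_{i-1}, b_{i-1}]$, so the interval $[a_i, u_i(\tilde g_{i-1})]$ is always nonempty. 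A short induction, using that both $G_i$ and $u_i$ are $2$-Lipschitz in their scalar argument, yields $|g_i^\star - \tilde g_i| \leq 2^{i-j}\varepsilon$ for all $i \geq j$. Consequently
\[ W(\bm g^\star) - W(\tilde{\bm g}) \;\geq\; 2^{-2j}\varepsilon \;-\; \sum_{k=1}^{L-j} 2^{-2(j+k)}\cdot 2^k \varepsilon \;=\; 2^{-2j}\varepsilon\Bigl(1 - \sum_{k=1}^{L-j} 2^{-k}\Bigr) \;=\; 2^{-j-L}\varepsilon \;>\; 0, \]
so the associated $\tilde y$ strictly exceeds $y^\star$, contradicting maximality. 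Hence $g_i^\star = a_i$ for every $i \notin I$.

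For the minimum-$y$ case, the argument is the mirror image. Suppose $j \notin I$ is the smallest index with $g_j^* < u_j(g_{j-1}^*)$, and set $\varepsilon \defeq u_j(g_{j-1}^*) - g_j^* > 0$. Construct $\tilde{\bm g}$ by copying values for $i < j$, setting $\tilde g_j = u_j(g_{j-1}^*)$, and propagating for $i > j$ as above (tracking $g_i^*$ from above while clipping downward to $u_i(\tilde g_{i-1})$ at flexible indices). The identical Lipschitz estimate gives $|g_i^* - \tilde g_i| \leq 2^{i-j}\varepsilon$ for $i \geq j$, so $W(\tilde{\bm g}) - W(\bm g^*) \geq 2^{-j-L}\varepsilon > 0$, and consequently $\tilde y < y^*$, contradicting minimality.

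The main obstacle is the feasibility bookkeeping in the downstream propagation. Once $g_j$ is perturbed, the forced values at indices $i \in I$ move automatically via the affine maps $G_i$, while the flexible indices $i \notin I$ must be clipped to $[a_i, u_i(\tilde g_{i-1})]$. Verifying that this interval is always nonempty and that the tracked sequence stays in $\prod_i [a_i, b_i]$ requires unpacking the three cases of \cref{lem:bndcomp}; the boundary situations, where $\tilde g_i$ lands at an endpoint of $[a_i, b_i]$, are handled by \cref{lem:unique_bnd_sln}. Once feasibility is secured, the Lipschitz inequality drives the contradiction exactly as in the proof of \cref{thm:sharp}.
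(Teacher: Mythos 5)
Your proposal is correct and follows essentially the same strategy as the paper's proof: perturb the optimal solution at a violating index toward the greedy value, propagate the change downstream using the $2$-Lipschitz property of the maps $G_i$ and $u_i$, and use the decay $\sum_{k\ge 1} 2^{-2(j+k)}2^{k} < 2^{-2j}$ to obtain a strictly better objective, contradicting optimality. The only differences are cosmetic — you select the smallest violating index and treat both cases symmetrically, whereas the paper takes the largest violating index for the minimization case and exploits the constant lower bound $a_i$ to truncate the propagation in the maximization case — but the key estimate and the feasibility bookkeeping via \cref{lem:bndcomp} are the same.
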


\begin{proof}
Let $\hat{x} \in [a_0,b_0]$. Then $\PLP_{\bm{\bar \alpha}_I}|_{x=\hat{x}} \neq \emptyset$ by \cref{lem:bndcomp}.

We begin by proving that $g^*_i = u_i(g^*_{i-1})$ for all $i \notin I$.  Suppose for a contradiction that, for some subset $j \in J \subseteq \lrbr{L} \setminus I$ and $\varepsilon_j>0$, we have $g_{j}^* = u_{j}(g_{j-1}^*) - \varepsilon_j$. Let $i$ be maximal in $J$, so that $g_{j}^*=u_j(g_{j-1}^*)$ for all $j>i$. Then we have $i \notin I$, since otherwise we have $g_{i}^*= 2g_{i-1} (1-\bar \alpha_i) + 2(1-g_{i-1}) = u_j(g_{j-1}^*)$ from \cref{eq:NN_ck3}.

Let $\tilde{g}_{j} = u_{j}(g_{j-1})$ for all $j \ge i$. for convenience, define 
$$
\tilde{\bm g} = (g_0^*, \dots g_{i-1}^*, \tilde{g}_{i}, \dots, \tilde{g}_L).
$$
To show that this choice is feasible, i.e., $\tilde{\bm g} \in \proj_{\bm g}(\PLP_{\bm{\bar \alpha}_I})$, we make an inductive observation. First, note that $\tilde{g}_{i-1} = g^*_{i-1} \in [a_{i-1},b_{i-1}]$ by \cref{lem:bndcomp}, with $\tilde{\bm g}_{\lrbr{0,i-1}} \in \proj_{\bm g_{\lrbr{0,i-1}}}(\PLP_{\bm{\bar \alpha}_I})$. Furthermore, for any $j$, $\tilde{\bm g}_{\lrbr{0,j-1}} \in \proj_{\bm g_{\lrbr{0,j-1}}}(\PLP_{\bm{\bar \alpha}_I})$ implies that there exists some $g_j \in \proj_{g_j}(\PLP_{\bm{\bar \alpha}_I}|_{\bm g_{\lrbr{0,j-1}} = \tilde{\bm g}_{\lrbr{0,j-1}}}) \subseteq [a_j,b_j]$, and $\tilde{g}_j$ is the largest such value by construction, yielding $\tilde{g}_j \in [a_j,b_j]$ and $\bm\tilde{g}_{\lrbr{0,j}} \in \proj_{\bm g_{\lrbr{0,j}}}(\PLP_{\bm{\bar \alpha}_I})$.

Now, we have by definition that $\tilde{g}_{i} - g^*_{i} = \varepsilon_i$. Furthermore, observe that for all $j>i$, $u_j(g_{j-1})$ is Lipschitz continuous with Lipschitz constant $2$, yielding
$$
\begin{array}{rl}
    |\tilde{g}_{j} - g^*_{j}| = |u_j(\tilde{g}_{j-1}) - u_j(g^*_{j-1})| \le 2 |\tilde{g}_{j-1} - g^*_{j-1}|
\end{array}
$$
Applying this recursively yields, for all $j>i$,
$$
\begin{array}{rl}
    |\tilde{g}_{j} - g^*_{j}| &\le 2^{j-i} \varepsilon.
\end{array}
$$
Note that this implies that, for $j>i$, we have $\tilde{g}_{j} - g^*_{j} \ge -2^{j-i} \varepsilon$. Then we have
\begin{equation}
    \begin{array}{rl}
         y^* - \tilde{y} &= 2^{-2i}(\tilde{g}_i - g^*_i) + \sum_{j=i+1}^L 2^{-2j} (\tilde{g}_j - g_j^*)\\
                         &\ge 2^{-2i}\varepsilon + \sum_{j=i+1}^L 2^{-2j} (-2^{j-i} \varepsilon)\\
                         &= 2^{-i} \varepsilon\lrp{2^{-i} - \sum_{j=i+1}^L 2^{-j}}\\
                         &= 2^{-i} \varepsilon(2^{-i} - (2^{-i} - 2^{-L}))\\
                         &= 2^{-(i+L)} \varepsilon.
    \end{array}
\end{equation}
However, this is a contradiction: it implies $\tilde{y}<y^*$, but $y^*$ was optimal! Thus, all $g_i$ must take on their upper-bounds given $g_{i-1}$.

Next, we prove that $g^\star_i = a_i$ for all $i \notin I$. The idea behind the proof is identical, with a notable simplifying difference: there is only one (constant) lower-bound $a_i$ on each $g_i$ for $i \notin I$. Thus, when enforcing that each $g^\star_j = a_i$ for $j>i$, $j \notin I$ with $g_i = a_i + \varepsilon$, the shift from $g^\star_i$ to $\tilde{g}_i$ effects no change in $g_j$, $j > i$. That is, if $i+1 \notin I$, $\tilde{g}_{i+1} - g^\star_{i+1} = 0$, yielding $\tilde{g}_{j} - g^\star_{j} = 0$ for $j \ge i+1$, so that \cref{eq:pf_greedy_err} simplifies to
\begin{equation}
    \begin{array}{rll}
         \tilde{y} - y^\star &= 2^{-2i}(g^*_i - \tilde{g}_i) = 2^{-2i} \varepsilon,
    \end{array}
    \label{eq:pf_greedy_err}
\end{equation}
implying $\tilde{y} > y^\star$, a contradiction. On the other hand, if $i+1 \in I$, let $k = \min\{j \in L-I : j \ge i+2\}$. Then, for each $j \in \lrbr{i+1,\dots, k-1}$, we have $|\tilde{g}_i - g^\star_i| = 2^{j-i} \varepsilon$. Furthermore, as $k \notin I$, we have $\tilde{g}_{k} = a_k = g^\star_k$, so that $\tilde{g}_k = g^\star_k$ for all $j > k$, yielding
\begin{equation}
    \begin{array}{rl}
         \tilde{y} - y^\star &= 2^{-2i}(g^\star_i - \tilde{g}_i) + \dsum_{j = i+1}^{k-1} 2^{-2j} (\tilde{g}_j - g_j^*)\\
                         &\ge 2^{-2i}\varepsilon + \dsum_{j=i+1}^{k-1} 2^{-2j} (-2^{j-i} \varepsilon)\\
                         &= 2^{-i} \varepsilon\lrp{2^{-i} - \dsum_{j=i+1}^{k-1} 2^{-j}}\\
                         &= 2^{-i} \varepsilon(2^{-i} - (2^{-i} - 2^{-(k-1)}))\\
                         &= 2^{-(i+k-1)}, \varepsilon
    \end{array}
\end{equation}
again implying $\tilde{y} > y^\star$, a contradiction.
\end{proof}

\begin{observation}\label{obs:continuity}
    Since each $u_i$, defined in \cref{eq:uidef}, is a continuous piecewise linear function, \cref{lem:greedy} recursively implies that each $g^*_i$ is continuous in $x$, and is a function of $g^*_{i-1}$.
\end{observation}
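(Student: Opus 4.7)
The plan is to verify the two claims separately and then chain them together via induction on $i$. First I would establish that $u_i$ is continuous and piecewise linear in $g_{i-1}$. This is immediate from its definition $u_i(g_{i-1}) = \min\{b_i,\, 2g_{i-1},\, 2(1-g_{i-1})\}$, since each of the three arguments is an affine (hence continuous piecewise linear) function of $g_{i-1}$, and the pointwise minimum of finitely many continuous piecewise linear functions is again continuous and piecewise linear.

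Next I would address the case $i \in I$. In that case \cref{eq:NN_ck3} forces $g_i = 2g_{i-1}(1-\bar{\alpha}_i) + 2(1-g_{i-1})\bar{\alpha}_i$, which is an affine, hence continuous piecewise linear, function of $g_{i-1}$. Together with the previous paragraph, this shows that in every case the update rule $g_{i-1} \mapsto g^*_i$ is continuous and piecewise linear.

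To conclude continuity of $g^*_i$ as a function of $x$, I would proceed by induction on $i$. For the base case, $g^*_0 = x$ is trivially continuous in $x$. For the inductive step, \cref{lem:greedy} guarantees that $g^*_i$ depends on $x$ only through $g^*_{i-1}$: if $i \notin I$, then $g^*_i = u_i(g^*_{i-1})$, and if $i \in I$, then $g^*_i$ is the affine expression above evaluated at $g^*_{i-1}$. In both cases $g^*_i$ is the composition of a continuous (piecewise linear) map with $g^*_{i-1}$, which is continuous in $x$ by the inductive hypothesis. Compositions of continuous functions are continuous, so $g^*_i$ is continuous in $x$, completing the induction.

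Essentially no step is hard here; the only point that requires a brief check is that the domain $[a_{i-1}, b_{i-1}]$ of the update rule is indeed the range of $g^*_{i-1}$ as $x$ varies over $[a_0,b_0]$, which is immediate from \cref{lem:bndcomp}. The observation is therefore best presented as an elementary consequence of \cref{lem:greedy} combined with standard properties of continuous piecewise linear functions.
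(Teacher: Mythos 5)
Your proposal is correct and matches the paper's intent exactly: the paper states this as an unproved observation, and the argument it has in mind is precisely the one you spell out — $u_i$ is a minimum of affine functions (hence continuous piecewise linear), the $i \in I$ update is affine, and induction on $i$ starting from $g^*_0 = x$ gives continuity by composition. Your write-up simply fills in the routine details the paper leaves implicit.
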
 

As a corollary to \cref{lem:unique_bnd_sln} and \cref{lem:greedy}, we have that, for the optimal solution to the minimization problem in \cref{lem:greedy}, we have that $g_j = b_j < G(g_{j-1})$ for exactly one value of $j$ if $\hat{x}$ is not feasible in $\PIP_{\bm{\bar \alpha}_I}$. 

\begin{corollary} \label{cor:gapsol}
    For all $\hat{x} \in \proj_x(\PLP_{\bm{\bar \alpha}_I}) \setminus \proj_x(\PIP_{\bm{\bar \alpha}_I})$, define $(y^*,\bm g^*)$ as in \cref{eq:greedy_min_opt}. Then we have for exactly one $j \in \lrbr{L} \setminus I$ that $g^*_j = b_j < G(g^*_{i-1})$.
    
    Furthermore, let $x^1,x^2 \in \proj_x{\PIP_{\bm{\bar \alpha}_I}}$ be such that for all $\lambda \in (0,1)$, we have  $\hat{x} \defeq \lambda x^1 + (1-\lambda) x^2 \notin \proj_x{\PIP_{\bm{\bar \alpha}_I}} = \emptyset$. Then the uniquely determined $j$ discussed above is the same for all $\hat{x} \in (x^1,x^2)$ where $(x^1,x^2)$ denotes the open interval between $x^1$ and $x^2$.
\end{corollary}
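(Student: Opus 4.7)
My plan is to establish the existence and uniqueness of the violating index $j$ at a fixed $\hat{x}$, and then to promote uniqueness to constancy on the open interval $(x^1,x^2)$ via continuity of the greedy solution in $\hat{x}$ from \cref{obs:continuity}.

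For existence at a single $\hat{x}$, \cref{lem:greedy} gives $g^*_i = u_i(g^*_{i-1}) = \min\{b_i, G(g^*_{i-1})\}$ for $i \notin I$, while $g^*_i = G_i(g^*_{i-1},\bar\alpha_i)$ is enforced for $i \in I$. If no $j \in \lrbr{L} \setminus I$ were violating, then $g^*_i = G(g^*_{i-1})$ would hold for every $i \notin I$, and I would assign $\alpha_i = 0$ when $g^*_{i-1} < \tfrac{1}{2}$ and $\alpha_i = 1$ otherwise, extending the greedy solution to a point in $\PIP_{\bm{\bar\alpha}_I}$ and contradicting $\hat{x} \notin \proj_x(\PIP_{\bm{\bar\alpha}_I})$. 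For uniqueness, suppose $j_1 < j_2$ are both violating. Since $g^*_{j_1} = b_{j_1}$, \cref{lem:unique_bnd_sln} pins $g^*_k$ to a unique $e_k \in \{a_k, b_k\}$ for every $k > j_1$. Using \cref{lem:bndcomp}, for $k \notin I$ one has $a_{k-1} \leq \tfrac{1}{2} \leq b_{k-1}$ together with $a_k = 2a_{k-1} = 2(1-b_{k-1})$, so $G(e_{k-1}) = a_k$ whether $e_{k-1} = a_{k-1}$ or $e_{k-1} = b_{k-1}$. Hence $g^*_{j_2} = a_{j_2} = G(g^*_{j_2-1})$, contradicting the strict inequality $g^*_{j_2} = b_{j_2} < G(g^*_{j_2-1})$ required of a violating $j_2$.

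For constancy of $j$ on $(x^1,x^2)$, define $\phi(\hat x)$ as the unique violating index at $\hat x$. Fix $\hat{x}_0 \in (x^1,x^2)$ with $\phi(\hat{x}_0) = j$, so $g^*_j(\hat{x}_0) = b_j$ and $G(g^*_{j-1}(\hat{x}_0)) > b_j$. Continuity of $g^*_{j-1}$ in $\hat x$ from \cref{obs:continuity}, combined with continuity of $G$, yields $G(g^*_{j-1}(\hat{x})) > b_j$ on a neighborhood $N \subseteq (x^1, x^2)$ of $\hat{x}_0$, whence the greedy rule gives $g^*_j(\hat x) = \min\{b_j, G(g^*_{j-1}(\hat x))\} = b_j$ on $N$. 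Thus $j$ is violating at each $\hat x \in N$, and uniqueness from the previous step forces $\phi(\hat x) = j$ on $N$; hence $\phi$ is locally constant on the connected set $(x^1, x^2)$, and therefore constant. The main obstacle is the bookkeeping in the uniqueness step: one must carefully combine the recursion in \cref{lem:bndcomp} with the endpoint-sign conditions $a_{k-1} \leq \tfrac{1}{2} \leq b_{k-1}$ to verify that the determined values $e_k$ from \cref{lem:unique_bnd_sln} agree with the sawtooth value $G(e_{k-1})$ for $k \notin I$, so that $j_2$ truly fails to be violating.
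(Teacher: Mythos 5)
Your proof is correct, and the first claim (existence and uniqueness of the violating index $j$ at a fixed $\hat x$) follows essentially the paper's route: existence via the contrapositive of \cref{lem:greedy}, and uniqueness via \cref{lem:unique_bnd_sln}. The paper phrases uniqueness slightly differently---it takes the \emph{first} index with $g^*_j = b_j$ and asserts that the pinned tail $(e_{j+1},\dots,e_L)$ also lies in $\proj_{\bm g}(\PIP_{\bm{\bar\alpha}_I})$, so no later index can violate---whereas you suppose two violating indices $j_1 < j_2$ and verify explicitly, using the recursion $a_k = 2a_{k-1} = 2(1-b_{k-1})$ and $a_{k-1} \le \tfrac12 \le b_{k-1}$ from \cref{lem:bndcomp}, that $G(e_{k-1}) = a_k = e_k$; this is the same fact the paper invokes, just spelled out. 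Where you genuinely diverge is the second claim. The paper argues by contradiction at a hypothetical transition point $\hat x$, taking one-sided limits $j^1$ (from below) and $j^2$ (from above) and concluding that at $\hat x$ both $g^*_{j_1} = b_{j_1} = G(g^*_{j_1-1})$ and $g^*_{j_2} = b_{j_2} = G(g^*_{j_2-1})$ hold, forcing $\hat x \in \proj_x(\PIP_{\bm{\bar\alpha}_I})$; this implicitly presumes the index map is piecewise constant with well-defined one-sided values. Your argument instead shows the index map is \emph{locally} constant---the strict inequality $G(g^*_{j-1}(\hat x_0)) > b_j$ is open under the continuity guaranteed by \cref{obs:continuity}, so $j$ remains violating on a neighborhood, and uniqueness pins the index there---and then invokes connectedness of $(x^1,x^2)$. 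This avoids any appeal to one-sided limits and is the cleaner of the two arguments; both rest on the same ingredients (\cref{lem:greedy}, \cref{lem:unique_bnd_sln}, \cref{obs:continuity}).
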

\begin{proof}
    Consider the first $j \in \lrbr{L} \setminus I$ for which $g^*_j = b_j$. Such a $j$ exists; otherwise, \cref{lem:greedy} would give $g^*_i = G_i(g_{i-1})$ for all $i \in \lrbr{L}$, a contradiction on the choice of $\hat{x}$ since there is a corresponding feasible choice of $\bm \alpha^* \in \{0,1\}^L$. Then, by \cref{lem:unique_bnd_sln}, the set $\proj_{\bm g_{\lrbr{j+1,L}}}(\PLP_{\bm{\bar \alpha}_I}|_{g_j = g^*_j})$ consists of only a single point, for which each $g_i \in \{a_i,b_i\}$ for $i \ge j+1$. Furthermore, by \cref{lem:unique_bnd_sln}, this point is also in $\proj_{\bm g_{\lrbr{j+1,L}}}(\PIP_{\bm{\bar \alpha}_I}|_{g_j = g^*_j})$ so that $g^*_i = G_i(g^*_{j-1})$ for all $i \ge j+1$. Further, if $g^*_j = G(g^*_{j-1}) = b_j$, then we would obtain $\bm g^* \in \proj_{\bm g} \PIP_{x  = \hat{x}}$, a contradiction on the choice of $\hat{x}$. Thus, as $g^*_j \le G(g^*_{j-1})$ by \cref{lem:greedy}, we must have that $g^*_j = b_j < G(g^*_{j-1})$.
    
    Now, let $x^1,x^2 \in \proj_x{\PIP_{\bm{\bar \alpha}_I}}$ be such that $(x^1,x^2) \cap \proj_x{\PIP_{\bm{\bar \alpha}_I}} = \emptyset$. Suppose that there is some $\hat{x} \in (x^1,x^2)$ such that, for all sufficiently small $\varepsilon>0$, we have that the value $j^1$ in for $x-\varepsilon$ is different from the value $j^2$ for $x+\varepsilon$. In this case, since the $g_j^*$ is continuous in $x$ and $g^*_{j-1}$, we have at $\hat{x}$ that both $g^*_{j_1} = b_{j_1} = G_{j_1}(g^*_{j_1-1})$ and $g^*_{j_2} = b_{j_2} = G_{j_2}(g^*_{j_2})$. Furthermore, for all other $i \in \lrbr{L} \setminus I$, we have by continuity that $g^*_i = G_i(g^*_{i-1})$, yielding $g^*_i = G_i(g^*_{i-1})$ for all $i$. This yields $\bm g^* \in \proj_{\bm g}{\PIP_{\bm{\bar \alpha}_I}}$, a contradiction on the choice of $\hat{x}.$
\end{proof}

With this greedy solution property and the following corollary, we are ready to prove the hereditary sharpness of $\PIP$ as a formulation for $\QIP$. It is helpful to note that, for the minimizer solutions in \cref{lem:greedy}, $g^*_i < b_i$ implies $g^*_i = G_i(g_{i-1})$. Furthermore, if $\bm g \in \proj_{\bm g}(\PLP_{\bm{\bar \alpha}_I})$ and $g^*_i = G_i(g_{i-1})$ for all $i \in \lrbr{L} \setminus I$, then we have $\bm g \in \proj_{\bm g}(\PIP_{\bm{\bar \alpha}_I})$.

\begin{theorem}\label{thm:hsharp}
$\QIP$ is hereditarily sharp.
\end{theorem}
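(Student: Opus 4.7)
The plan is to mirror the sharpness proof of \cref{thm:sharp}: validity already gives $\QLP_{\bm{\bar \alpha}_I} \supseteq \conv(\QIP_{\bm{\bar \alpha}_I})$, so I fix some $\hat x \in [a_0,b_0]$ and argue that both the minimizer $y^*$ and the maximizer $y^\star$ of $y$ over the slice $\QLP_{\bm{\bar \alpha}_I}|_{x=\hat x}$ produce points lying in $\conv(\QIP_{\bm{\bar \alpha}_I})$; convexity of the slice then fills in every intermediate $y$.

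For the maximizer, \cref{lem:greedy} forces $g_i^\star = a_i$ for every $i \notin I$, while for $i \in I$ the recurrence $g_i = G_i(g_{i-1},\bar \alpha_i)$ is affine in $g_{i-1}$. Unwinding from $g_0 = x$ shows that $y^\star$ is a single affine function of $x$ on all of $[a_0,b_0]$. \cref{lem:unique_bnd_sln} evaluated at $x = a_0$ and $x = b_0$ pins every $g_i$ uniquely into $\{a_i,b_i\}$, which certifies integer-feasible completions of $\bm \alpha$, so both $(a_0, y^\star(a_0))$ and $(b_0, y^\star(b_0))$ lie in $\QIP_{\bm{\bar \alpha}_I}$. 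Hence $(\hat x, y^\star)$ is a convex combination of two points of $\QIP_{\bm{\bar \alpha}_I}$.

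For the minimizer, if $\hat x \in \proj_x(\QIP_{\bm{\bar \alpha}_I})$, then the greedy vector furnished by \cref{lem:greedy} already extends to an integer completion of $\bm \alpha$, and $(\hat x, y^*) \in \QIP_{\bm{\bar \alpha}_I}$ directly. Otherwise $\hat x$ lies in an open gap $(x^1,x^2)$ between two consecutive points of $\proj_x(\QIP_{\bm{\bar \alpha}_I})$, and \cref{cor:gapsol} fixes a single ``gap index'' $j \in \lrbr{L} \setminus I$ with $g_j^*(x) = b_j$ throughout the gap. \cref{lem:unique_bnd_sln} then locks each $g_i^*$ for $i>j$ to a constant in $\{a_i,b_i\}$, and for $i < j$ with $i \notin I$ the recurrence $g_i^* = G(g_{i-1}^*)$ acts as a single affine branch of $G$. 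This renders $y^*$ an affine function of $x$ on $(x^1,x^2)$, and \cref{obs:continuity} together with \cref{lem:unique_bnd_sln} extends the segment continuously to endpoints that lie in $\QIP_{\bm{\bar \alpha}_I}$, so $(\hat x, y^*)$ is again a convex combination of two points of $\QIP_{\bm{\bar \alpha}_I}$.

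The main obstacle is the no-crossing argument on the minimizing side: I need to rule out the possibility that $g_{i-1}^* = \tfrac{1}{2}$ for some $i < j$ with $i \notin I$ inside the gap, since this would force $g_i^* = 1 = b_i$ and provide a second valid gap index, contradicting the uniqueness clause in \cref{cor:gapsol}. Once that technical point is secured via continuity of the greedy trajectory, the rest is bookkeeping on top of the machinery already developed in \cref{lem:bndcomp,lem:unique_bnd_sln,lem:greedy,cor:gapsol}.
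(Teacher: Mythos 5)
Your overall skeleton (fix $\hat x$, treat the $y$-maximal and $y$-minimal points of the slice separately, lean on \cref{lem:bndcomp,lem:unique_bnd_sln,lem:greedy}) matches the paper, and your treatment of the upper bound is essentially identical to the paper's. Where you genuinely diverge is the lower bound over an infeasible $\hat x$: the paper takes the two neighboring integer-feasible points $x^1,x^2$, invokes the Gray-code machinery (\cref{lem:gcode_res}, \cref{thm:rgc}) to identify the single bit $k$ that flips between them, and then shows that the convex combination $\hat{\bm g}=\lambda\bm g^1+(1-\lambda)\bm g^2$ coincides with the greedy minimizer, the crux being an explicit computation at $\lambda=\tfrac12$ showing $g^*_{k-1}=\tfrac12$ and hence $g^*_k=b_k$. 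You instead argue directly that the greedy minimizer is an affine function of $x$ across the gap and that its endpoints are integer-feasible. That route is attractive: it avoids the Gray-code apparatus entirely for this theorem and replaces the $\lambda=\tfrac12$ computation by a structural statement about the greedy trajectory.

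However, your resolution of the ``no-crossing'' obstacle does not go through as stated. If $g^*_{i-1}$ hits $\tfrac12$ at an interior point of the gap for some $i<j$, $i\notin I$, then $g^*_i=\min\{b_i,1\}=b_i$, but this yields a \emph{strict} second gap index $g^*_i=b_i<G(g^*_{i-1})=1$ only when $b_i<1$; when $b_i=1$ (which does occur, e.g.\ whenever no level below $i$ forces a contraction) you get $g^*_i=b_i=G(g^*_{i-1})$ with equality, which is not a violation of the uniqueness clause of \cref{cor:gapsol} as stated. The correct repair is different: \cref{cor:gapsol}'s proof actually identifies $j$ as the \emph{first} index at which $g^*$ attains its upper bound $b$, so for $i<j$ one has $g^*_i<b_i$ pointwise, which already forces $g^*_{i-1}\neq\tfrac12$ (since $g^*_{i-1}=\tfrac12$ would give $g^*_i=b_i$); alternatively, in the $b_i=1$ case the locking of all downstream levels by \cref{lem:unique_bnd_sln} would make $\bm g^*$ integer-feasible, contradicting $\hat x\notin\proj_x(\PIP_{\bm{\bar\alpha}_I})$. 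Either way you must extract from the \emph{proof} of \cref{cor:gapsol} (not its statement) that $j$ is the first bound-attaining index and that this index is constant on the gap, before continuity lets you conclude that $g^*_{i-1}$ stays on one side of $\tfrac12$. A second, smaller omission: in the feasible case you assert that the greedy minimizer ``already extends to an integer completion''; this is exactly branch sharpness at $\hat x$ and cannot be taken for granted --- the paper derives it from the chain $\min\QLP_{\bm{\bar\alpha}_I}|_{x=\hat x}\ge\min\QLP|_{x=\hat x}=\min\QIP|_{x=\hat x}=\min\QIP_{\bm{\bar\alpha}_I}|_{x=\hat x}$, using \cref{thm:sharp} and the fact that the integer slice at a feasible $\hat x$ is a single point. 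With those two repairs your argument closes.
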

\begin{proof}
Let $a_0, b_0$ as in \cref{lem:bndcomp}, so $[a_0, b_0] = \proj_x(\PLP_{\bm{\bar \alpha}_I}) = \conv(\proj_x(\PIP_{\bm{\bar \alpha}_I}))$.

Let $\hat x \in [a_0, b_0]$. We need to show that $\QLP_{\bm{\bar \alpha}_I} = \conv(\QIP_{\bm{\bar \alpha}_I})$.  Since both sets are compact convex sets in $\R^2$, it suffices to show that $\QLP_{\bm{\bar \alpha}_I}|_{x = \hat x} = \conv(\QIP_{\bm{\bar \alpha}_I})|_{x = \hat x}$.
In this vein, define we define the upper and lower bounds in $y$ for each set
\begin{align*}
    [y^{\LP}_l(\hat x), y^{\LP}_{u}(\hat x)] &= \left[\min\{ y : (x,y) \in \QLP_{\bm{\bar \alpha}_I}|_{x = \hat x}\},\max\{ y : (x,y) \in \QLP_{\bm{\bar \alpha}_I}|_{x = \hat x}\}\right],\\
    [y^{\IP}_{l}(\hat x), y^{\IP}_{u}(\hat x)] &=\\
    &\left[\min\{ y : (x,y) \in \conv(\QIP_{\bm{\bar \alpha}_I})|_{x = \hat x}\},\max\{ y : (x,y) \in \conv(\QIP_{\bm{\bar \alpha}_I})|_{x = \hat x}\}\right].
\end{align*}
We will show that the upper and lower bounds coincide.
    
    \paragraph{Lower bounds}
    We begin by showing that $y^{\IP}_{l}(x)=y^{\LP}_{l}(x)$. 
    Since $\hat x \in \proj_x(\PLP_{\bm{\bar \alpha}_I})$, we have two cases. If $\hat x \in \proj_x(\QIP_{\bm{\bar \alpha}_I})$, then due to sharpness by \cref{thm:sharp}, we have
    \begin{align*}
         \min\{y: y \in \QIP_{\bm{\bar \alpha}_I}|_{x = \hat x}\} &\ge \min\{y : y \in \QLP_{\bm{\bar \alpha}_I}|_{x = \hat x}\} \\
         &\ge \min\{y : y \in \QLP|_{x=\hat x}\} \\
         &= \min\{y : y \in \QIP|_{x = \hat x}\} \\
         &= \min\{y : y \in \QIP_{\bm{\bar \alpha}_I}|_{x = \hat x}\},
    \end{align*}
    and so the result holds. In order, the four relations hold by: relaxation; relaxation; sharpness; and the fact that $\PIP|_{x = \hat x}$ consists of a single point for feasible $\hat{x}$, so that the restriction does not change the optimal solution.
    
    Otherwise,  $\hat x \notin \proj_x(\QIP_{\bm{\bar \alpha}_I})$.  Let
    $$(y^*, \bm g^*) \defeq \argmin\{y : (y, \bm g) \in \proj_{y, \bm g}(\PLP_{\bm{\bar \alpha}_I}|_{x = \hat x})\}$$ 
    and let $x^1<\hat x$ and $x^2>\hat x$ be the closest lower and upper bounds to $\hat x$ in $\proj_x(\QIP_{\bm{\bar \alpha}_I})$ (such points exist by \cref{lem:bndcomp}). Let $(x^1,y^1,\bm g^1, \bm \alpha^1), (x^2,y^2,\bm g^2, \bm \alpha^2) \in \PIP_{\bm{\bar \alpha}_I}$ be chosen such that $\bm \alpha^1 \in \proj_{\bm \alpha}\PIP_{\bm{\bar \alpha}_I}|_{x \in (x^1 - 2^{-L}, x^1)}$ and $\bm \alpha^1 \in \proj_{\bm \alpha}\PIP_{\bm{\bar \alpha}_I}|_{x \in (x^2, x^2 + 2^{-L})}$. According to \cref{thm:rgc}, $\bm \alpha^1$ and $\bm \alpha^2$ are the Gray codes for some integers $i_j$ and $i_{j+1}$.  
    By \cref{lem:gcode_res}, we know that there exists exactly one index $k \in \lrbr{L}$ such that $\alpha^1_i = \alpha^2_i$ for all $i \neq k$ and $\alpha^1_k = 1 - \alpha^2_k$.
    
    It follows that $\bm g^1$ and $\bm g^2$ satisfy the equations
    \begin{align}
    \label{eq:upper-bound-equations}
        g_i &= G_i(g_{i-1}, \alpha^1_i) = 2g_{i-1} (1-\alpha^1_i) + 2(1-g_{i-1}) \alpha^1_i & i \in \lrbr{L} \setminus k.
    \end{align}
     Furthermore, by \cref{lem:greedy}, for the $y$-minimal solution at all three $x$-values, we have that all $g_i$, $i \in \lrbr{L} \setminus I$ take on $u_i(g_{i-1}) = \min\{2g_{i-1}, 2(1-g_{i-1}), b_i\}$. This function is linear if $i \in I$; otherwise, it is the minimum of three functions: two linear, and one constant. 
    
    Choose $\lambda \in [0,1]$ such that $\hat x = \lambda x^1 + (1-\lambda) x^2$ and define $\hat y$ and $\hat{\bm g}$ by $(\hat x,\hat{y},\hat{\bm g}) = \lambda(x^1,y^1,\bm g^1) + (1-\lambda) (x^2,y^2,\bm g^2)$.  By convexity of $\PLP_{\bm{\bar \alpha}_I}$, the point $(\hat x,\hat{y},\hat{\bm g})$ is in $\proj_{x,y,\bm g}(\PLP_{\bm{\bar \alpha}_I})$. We want to show that $(\hat x,\hat{y},\hat{\bm g})= (\hat x,y^*,\bm g^*)$. To do so, by \cref{lem:greedy}, we have only to show that all $\hat{g}_{i}=u_{i}(\hat{g}_{i-1})$. 
   
    By convexity, since $\bm g^1$ and $\bm g^2$ satisfy \cref{eq:upper-bound-equations}, it follows that $\hat g$ satisfies them as well.  Hence, $\hat g_i = G_i(\hat g_{i-1}, \alpha^1_i)$ for all $i \neq k$. Furthermore, we have by induction that $g_i^* = \hat{g}_i$ for all $i \le k-1$: (base case) $g_0^*=\hat{g}_0 = \hat{x}$; (inductive case) for $i \le k-1$, if $g^*_{i-1} = \hat{g}_{i-1}$, then by \cref{lem:greedy}, we have
    $$\hat{g}_i \le \max \{g_i : (x, y, \bm g, \bm \alpha) \in \PLP_{\bm{\bar \alpha}_I}|_{g_{i-1} = \hat{g}_{i-1}}\} = g^*_i = u_i(\hat g_{i-1}) \le G_i(\hat{g}_{i-1}, \alpha^1_i) = \hat{g}_i,$$ 
    so that $\hat{g}_i = g^*_i = G_i(\hat{g}_{i-1}, \alpha^1_i)$. Similarly, we have $\hat{g}_k \le g_k^*$. We will show that $\hat g_k = b_k$.
    
    To do so, we will first show that $g^*_k = b_k$. If this holds, then since $\hat{x}$ was arbitrary, and since $g_k^*$ is continuous in $\hat{x}$ by \cref{obs:continuity}, we have that $g^1_k = g^2_k = b_k$. Since $\hat g_k$ is a convex combination of $g^1_k$ and $g^2_k$, this yields $\hat g_k = b_k$. 
    
    To show $g^*_k = b_k$, we first note that, by \cref{cor:gapsol}, there exists some $j \in \lrbr{L} \setminus I$ such that, for all $\hat{x} \in (x^1,x^2)$, we have $g^*_j = b_j < G(g^*_{j-1})$, with $g^*_i < b_i$ for all $i<j$. Furthermore, since $g^*_i = G(g^*_{i-1})$ for $i \in \lrbr{k-1} \setminus I$, we must have that $j \ge k$. To establish that $j = k$, we will show that $g^*_k = b_k$ when $\lambda = \tfrac{1}{2}$, implying that $j>k$ is impossible, since then we would have $g^*_k < b_k$.
    
    Now, define $\tilde{I} = \lrbr{L} \setminus \{k\}$, and define $\tilde \alpha_I$ so that $\tilde \alpha_i = \alpha^1_i$ for all $i \in \tilde{I}$. Define $\tilde{a}_i$ and $\tilde{b}_i$ as in \cref{lem:bndcomp} for $\PLP_{\tilde \alpha_{\tilde I}}$. Let
    $$
    (\tilde{y}, \bm{\tilde{g}})\defeq \argmin\{y : (y, \bm g) \in \proj_{y, \bm g}(\PLP_{\tilde \alpha_{\tilde I}}|_{x = \hat x})\}.
    $$
    Then by \cref{lem:greedy}, we have $\tilde{g}_k = \min\{\tilde{b}_k, G(\tilde{g}_{k-1})\}$. However, $\tilde{g}_k = G(\tilde{g}_{k-1})$ would yield $\bm \tilde{g} \in \proj_{\bm g} (\PIP_{\tilde \alpha_{\tilde I}})$, a contradiction on the choice of $\hat{x}$. Thus, we have $\tilde{g}_k = \tilde{b}_k$. Since $\hat{x}$ was arbitrary and since $\tilde{g}_k$ is continuous in $\hat{x}$ by \cref{obs:continuity}, this implies that $g^1_k = g^2_k = \tilde{b}_k$. Now, this allows us to compute $g^1_{k-1}$ and $g^2_{k-1}$ given $\alpha^1_i$, which will allow us to compute $b^*_k$ for $\lambda = \tfrac{1}{2}$ via $g^*_{k-1} = \hat{g}_{k-1}$ and $g^*_k = u_k(g^*_{k-1})$.
    
    To compute $\hat{g}_{k-1}$, there are two cases. Either $\alpha^1=0$, yielding $g^1_{k-1} = \tfrac{1}{2} \tilde{b}_k$ and $g^2_{k-1} = 1-\tfrac{1}{2} \tilde{b}_k$, or $\alpha^1 = 1$, yielding $g^1_{k-1} = \tfrac{1}{2} (1-\tilde{b}_k)$ and $g^2_{k-1} = \tfrac{1}{2} \tilde{b}_k$. In either case, we have $g^1_{k-1} + g^2_{k-1} = 1$, and so
    $$g^*_{k-1} = \hat{g}_{k-1} = \tfrac{1}{2}(g^1_{k-1} + g^2_{k-1}) = \tfrac{1}{2},$$
    yielding by \cref{lem:greedy} that
    $$g^*_k = \min\{2 g^*_{k-1}, 2(1-g^*_{k-1}), b_k\} = \min\{1,1,b_k\} = b_k,$$
    as required. Thus, since $\hat g_k = b_k$ and $\hat{g}_i$ satisfies \cref{eq:upper-bound-equations} for all $i \neq k$, it follows that $\hat{g}_i = u_i(\hat{g}_{i-1})$ for all $i \in \lrbr{L}$. Hence, by \cref{lem:greedy}, we have $\bm g^* = \hat {\bm g}$.  
        
    \paragraph{Upper bounds}
    For the upper bounds, note that $(x,y) \in \QIP$ implies $y = F(x)$, where $F$ is a convex function. Furthermore, by \cref{lem:unique_bnd_sln}, we have that $y^{\IP}_{l}(x)=y^{\IP}_{u}(x)=y^{\LP}_{u}(e_0)=y^{\LP}_{l}(x)$ for $x \in \{a_0,b_0\}$ (as the extended-space solutions are unique in $\PLP_{\bm{\bar \alpha}_I}$ for $x \in \{a_0,b_0\}$). Thus, by the convexity of $F$, we have that $y^{\IP}_{u}=y^{\LP}_{u}$ if and only if for all $x \in [a_0,b_0]$, we have for some $\lambda \in [0,1]$ that $(x, y^{\LP}_{u}(x)) = \lambda(a_0,F(a_0)) + (1-\lambda)(b_0,F(b_0))$. Alternatively, since $y_{\LP_u}(x) = F(x)$ for $x \in [a_0,b_0]$, it suffices to show that $y^{\LP}_{u}(a_0) = F(a_0)$, $y^{\LP}_{u}(b_0) = F_{b_0}$, and that $y^{\LP}_{u}$ is a linear function of $x$.
    
    To this end, consider any $x \in \proj_{x}(\PLP_{\bm{\bar \alpha}_I})$, and consider $(y, \bm g) = \max_y\{(y, \bm g) \in \proj_{y, \bm g}(\PLP_{\bm{\bar \alpha}_I}|_{x})\}$. Then by \cref{lem:greedy}, we have for all $i \in \lrbr{L}$, $i \notin 1$ that $g_i = a_i$, which is a constant, while for all $i \in I$ we have that $g_i = G(g_{i-1},\bar \alpha_i)$. Thus, eliminating all $g_i's$, we find that, $y^*$ is defined as $y^* = x - t(x)$, where $t(x)$ is some linear function of $x$, and thus so is $y^*(x)$, as required.
\end{proof}

\subsection{A connection with existing MIP formulations}

Interestingly, Gray codes also naturally appear in the ``logarithmic'' MIP formulations for general continuous univariate piecewise linear functions due to Vielma et al.~\cite{Vielma2010,Vielma2009}. Consider applying this existing formulation\footnote{In actuality, any Gray code, not just the reflected Gray code studied in this paper, yields a (potentially distinct) logarithmic formulation for a univariate function. Here, we mean the one constructed with the reflected Gray code, which is the most common choice regardless.} to approximate the univariate quadratic term with the same $2^L+1$ breakpoints as discussed in \cref{sec:gcodes}. The resulting MIP formulation uses $L$ binary variables, which follow the same interpretation as the neural network formulation as discussed in \cref{thm:rgc}. Moreover, it requires $\mathcal{O}(L)$ linear constraints (excluding variable bounds), and is ideal, a stronger property than the sharpness shown in \cref{thm:hsharp}. However, it comes at the price of an additional $2^L+1$ auxiliary continuous variables, and so is unlikely to be practical without a careful handling through, e.g., column generation. Therefore, our formulation sacrifices strength to reduce this to $\mathcal{O}(L)$ auxiliary continuous variables.

\section{Convex hull characterization}\label{sec:chull}
We explore a facet characterization of the convex hull of our model.  Such a characterization could be used to improve and branch \& cut scheme when solving MIPs with our model.

Although \eqref{eqn:ideal-formulation-one-layer} offers a convex hull formulation for a single ``layer'' in our construction, the composition over multiple layers ($\L > 1$) in \eqref{eqn:graph-formulation} will in general fail describe the integer hull. We characterize additional valid inequalities for the integer hull of \eqref{eqn:graph-formulation} that are derived via a connection with the parity polytope.

We begin by rewriting the relationship between the variables associated with each layer with the quadratic recurrence relation 
\begin{equation} \label{eqn:recursion-1}
g_i = (1-2 \alpha_{i})(2g_{i-1} -1) + 1 \quad i \in \brackets{\L}.
\end{equation}
For convenience, let $h_i := 2g_i - 1$ and $a_i := (1-2\alpha_i)$ for each $i \in \brackets{\L}$. Then after some simple algebraic manipulation, \eqref{eqn:recursion-1} is equivalent to
\begin{equation*}
h_i = 2a_{i}h_{i-1} + 1 \quad i \in \brackets{\L}.
\end{equation*}
Expanding the recurrence relation, we have
\begin{align*}
h_\L =  2^{\L}\left(\prod_{i=1}^{\L} a_i\right) \left(h_0 + \sum_{i=1}^{\L-1} \frac{1}{2^i\prod_{j = 1}^i a_j}\right) + 1.
\end{align*}
Define $b_i := \prod_{j=1}^i a_j$ for each $i \in \brackets{\L}$. As each $a_j \in \{-1,+1\}$, each $b_i \in \{-1,+1\}$ as well, and so $b_i = 1 / b_i$. Hence,
\begin{equation} \label{eqn:recursion-2}
    h_\L = 2^{\L}b_{\L}\left(h_0 + \sum_{i=1}^{\L-1} 2^{-i}b_i\right)  + 1.
\end{equation}
Multiplying both sides of \eqref{eqn:recursion-2} by $b_\L \in \{-1,+1\}$ yields
\begin{equation} \label{eqn:recursion-3}
    h_\L b_\L = 2^\L\left( h_0 + \sum_{i=1}^{\L-1} 2^{-i}b_i \right) + b_\L.
\end{equation}
Combining this with the McCormick inequality $h_\L + b_L - 1 \leq h_\L b_\L$ that is valid for the bilinear left-hand side of \eqref{eqn:recursion-3} (recall $h_\L, b_\L \in [-1,+1]$), we derive the following valid inequalities:
\begin{subequations}
\begin{align}
        h_\L \leq 2^\L \left( h_0 + \sum_{i=1}^{\L-1} 2^{-i}b_i \right) + 1, \label{eqn:cut-1} \\
        h_\L \leq -2^\L \left( h_0 + \sum_{i=1}^{\L-1} 2^{-i}b_i \right) + 1. \label{eqn:cut-2}
\end{align}
\end{subequations}
which can be readily mapped back to the original space of variables.
\begin{proposition}
    The following inequalities are valid for \eqref{eqn:graph-formulation}:
    \begin{subequations}
    \begin{align}
        g_\L \leq 2^{\L-1} \left( 2g_0 - 1 + \sum_{i=1}^{\L-1} 2^{-i}\prod_{j=1}^i (1-2\alpha_j) \right) + 1 \label{eqn:cut-orig-space-1} \\
        g_\L \leq -2^{\L-1} \left( 2g_0 - 1 + \sum_{i=1}^{\L-1} 2^{-i}\prod_{j=1}^i (1-2\alpha_j) \right) + 1 \label{eqn:cut-orig-space-2}
    \end{align}
    \end{subequations}
    If $L=2$, then \eqref{eqn:cut-1} and \eqref{eqn:cut-2} are both linear inequalities in $g$ and $\alpha$.
\end{proposition}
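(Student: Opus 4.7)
The derivation preceding the proposition statement does most of the work in the extended $(h,b)$-space; my plan is essentially to (i) complete the missing McCormick step that yields \eqref{eqn:cut-2}, (ii) translate \eqref{eqn:cut-1} and \eqref{eqn:cut-2} back into the original $(g,\alpha)$-space via the substitutions $h_i = 2g_i - 1$ and $b_i = \prod_{j=1}^i (1-2\alpha_j)$, and (iii) verify the linearity claim for $L=2$ by inspection.

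First, I would justify that both \eqref{eqn:cut-1} and \eqref{eqn:cut-2} are valid. The inequality \eqref{eqn:cut-1} follows by substituting the McCormick lower bound $h_L + b_L - 1 \leq h_L b_L$ into the right-hand side of \eqref{eqn:recursion-3}, which cancels the stray $b_L$ term. For \eqref{eqn:cut-2}, I would apply the opposite McCormick facet $h_L b_L \leq -h_L + b_L + 1$, valid because $h_L, b_L \in [-1,1]$ in any integer-feasible point (indeed $b_L \in \{-1,+1\}$), and again combine it with \eqref{eqn:recursion-3}; the $b_L$ on each side cancels, yielding $h_L \leq -2^L(h_0 + \sum_{i=1}^{L-1} 2^{-i} b_i) + 1$.

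Next, I would map each inequality back to $(g,\alpha)$-space. Plugging $h_L = 2g_L - 1$ and $h_0 = 2g_0 - 1$ into \eqref{eqn:cut-1} and \eqref{eqn:cut-2}, expanding $b_i = \prod_{j=1}^i(1-2\alpha_j)$, and dividing by $2$ after absorbing the constant $-1$ on the left into the $+1$ on the right yields exactly \eqref{eqn:cut-orig-space-1} and \eqref{eqn:cut-orig-space-2}. This is routine algebra; the one bookkeeping point is the factor of $2^{L-1}$ (rather than $2^L$) coming from the division by $2$.

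Finally, for the $L=2$ case I would observe that the sum $\sum_{i=1}^{L-1} 2^{-i} \prod_{j=1}^i (1-2\alpha_j)$ collapses to the single term $\tfrac{1}{2}(1-2\alpha_1)$, which is affine in $\alpha_1$. Since every remaining term in \eqref{eqn:cut-orig-space-1} and \eqref{eqn:cut-orig-space-2} is affine in $g_0$ and $g_L$, both cuts become linear inequalities in $(g,\alpha)$. There is no real obstacle here: the only nontrivial conceptual step is noticing that two distinct McCormick facets (a lower-envelope and an upper-envelope one) are needed to obtain both \eqref{eqn:cut-1} and \eqref{eqn:cut-2}; everything else is direct substitution.
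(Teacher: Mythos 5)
Your proposal is correct and follows essentially the same route as the paper, whose ``proof'' is really the derivation in the extended $(h,b)$-space preceding the proposition statement. The one place you add value is in making explicit how \eqref{eqn:cut-2} arises --- the paper cites only the lower McCormick facet $h_\L + b_\L - 1 \leq h_\L b_\L$ and leaves the second inequality implicit (it follows, e.g., by replacing $b_\L$ with $-b_\L \in \{-1,+1\}$), whereas your use of the upper facet $h_\L b_\L \leq -h_\L + b_\L + 1$ is a clean and equally valid way to obtain it; the back-substitution $h_i = 2g_i - 1$ and the $L=2$ linearity check are exactly as the paper intends.
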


Based on computational observations, for $\L=2$,  these are exactly the nontrivial facet-defining linear inequalities for the integer hull of \eqref{eqn:graph-formulation}. For $\L > 2$, we can produce a large class of valid inequalities by bounding the product variables $b_i$. In particular, bounds on these products can be derived from valid inequalities for the parity polytope.

\subsection{Parity inequalities}
The parity polytope is the convex hull of $P_n^{\even}$, the set of all $\alpha \in \{0,1\}^n$ whose components sum to an even number. It has $2^{n-1}$ facets of the form 
\begin{align} \label{eqn:parity-facets}
    \sum_{i \in \brackets{n}\setminus I} \alpha_i + \sum_{i \in  I} (1-\alpha_i) & \geq 1  & I \subseteq \brackets{n} \text{ s.t. } |I| \text{ is odd}.
\end{align}

Define $\beta_i \in \{0,1\}$, such that $b_i = (1-2 \beta_i)$. Then 
\begin{equation}
1 = b_j^2 = (1-2 \beta_j) \prod_{i=1}^j (1-2\alpha_i) = (-1)^{\beta_j + \sum_{i=1}^j \alpha_i}.
\end{equation}
Hence, for $\alpha_i \in \{0,1\}$, the sum  $\beta_j + \sum_{i=1}^j \alpha_i$ is even and therefore $(\beta_j, \alpha_1, \dots, \alpha_j) \in P_{j+1}^{\even}$.   Therefore, we can apply \eqref{eqn:parity-facets} to derive valid inequalities for feasible solutions to \eqref{eqn:graph-formulation} of the form
\begin{subequations}
\begin{align}
        \beta_j + \sum_{i \in \brackets{j}\setminus I} \alpha_i + \sum_{i \in I} (1-\alpha_i) & \geq 1  & I \subseteq \brackets{j} \text{ s.t. } |I| \text{ is odd},\\
            (1-\beta_j) + \sum_{i \in \brackets{j}\setminus I} \alpha_i + \sum_{i \in I} (1-\alpha_i) & \geq 1  & I \subseteq \brackets{j} \text{ s.t. } |I| \text{ is even}.
\end{align}
\end{subequations}
After recalling the definition $b_j = (1-2\beta_j)$ and rearranging, we are left with
\begin{subequations}
\begin{align}
\label{eq:beta-upper}
        b_j &\leq 2 \left( \sum_{i \in \brackets{j}\setminus I} \alpha_i + \sum_{i \in I} (1-\alpha_i) \right)-1  & I \subseteq \brackets{j} \text{ s.t. } |I|  \text{ is odd},\\
        -b_j &\leq  2\left( \sum_{i \in \brackets{j}\setminus I} \alpha_i + \sum_{i \in I} (1-\alpha_i)\right)  - 1&  I \subseteq \brackets{j} \text{ s.t. } |I| \text{ is even}.
            \label{eq:-beta-upper}
\end{align}
\end{subequations}
Hence, combining these upper bounds on $b_i$ with \eqref{eqn:cut-1} and the upper bounds on $-b_i$ with \eqref{eqn:cut-2} produces an exponential family of valid inequalities for feasible solutions to \eqref{eqn:graph-formulation}. We call these \emph{parity inequalities}. 

\begin{example}
For $\L = 4$, we compute some of the nontrivial facet-defining inequalities of \eqref{eqn:graph-formulation}, written in terms of the original variables:
$$
\begin{array}{rcrlcrlcrlcrlcrlcl}
 g_4 &\leq&&16  g_{0}  &-& 14  \alpha_{1} &+& 6  \alpha_{2} &+& 2  \alpha_{3},  \\
 g_4 &\leq&&16  g_{0}  &-& 2  \alpha_{1} &-& 6  \alpha_{2} &+& 2  \alpha_{3},   \\
 g_4 &\leq&-&16  g_{0}&  +& 14  \alpha_{1} &+& 6  \alpha_{2} &+& 2  \alpha_{3} &+&2, \\
 g_4 &\leq&-&16  g_{0}  &+& 2  \alpha_{1} &-& 6  \alpha_{2} &+& 2  \alpha_{3} &+&14. \\
\end{array}
$$
Each of these inequalities is, in fact, a parity inequality, and can be constructed by a suitable combination of either \eqref{eqn:cut-1} with inequalities from \eqref{eq:beta-upper}, or \eqref{eqn:cut-2} with inequalities from \eqref{eq:-beta-upper}.

For example, the facet $g_4 \leq 16  g_{0}  - 14  \alpha_{1} + 6  \alpha_{2} + 2  \alpha_{3}$ is equivalent to 
 \begin{equation*}
 h_4 
 \leq 16  h_{0} + 15 + 2(- 14  \alpha_{1} + 6  \alpha_{2} + 2  \alpha_{3}),
 \end{equation*}
which can be produced as a conic combination of the inequality
\begin{equation*}
    h_4 \leq 2^4 \left( h_0 + \frac{1}{2}b_1 + \frac{1}{4}b_2 + \frac{1}{8}b_3 \right) + 1
\end{equation*}
from \eqref{eqn:cut-1} with the inequalities
\begin{align*}
b_1 &\leq  2 \big( 
    (1-\alpha_1)
\big) -1,\\
b_2 &\leq  2 \big( 
    (1-\alpha_1) + \alpha_2
\big) -1,\\
b_3 &\leq2 \big( 
    (1-\alpha_1) + \alpha_2 + \alpha_3)
\big) -1
\end{align*}
from the family \eqref{eq:beta-upper} corresponding to $I=\{1\}$ for all $j=1,2,3$, respectively.
\end{example}

\subsection{Separation over exponentially many parity inequalities}
Since there may be exponentially many parity inequalities, we provide an algorithm to separate over them.  In particular, given a point $(g,\alpha) \subseteq [0,1]^{\L+1}\times [0,1]^\L$, we can determine if it lies in the intersection of the parity inequalities by computing the inequalities that give smallest upper bounds for $b_j$ for each $j \in \llbracket \L \rrbracket$.  To do so, for each $b_j$, we need to determine the set $I_j \subseteq \brackets{j}$ that minimizes the right-hand side of equation \eqref{eq:beta-upper} or \eqref{eq:-beta-upper}.  
This can be done, in fact, by optimizing over another parity polytope. That is, set $I_j := \{i \in \brackets{j} : z^*_i = 1\}$, where
$$
 z^* \in \argmin_{z \in \{0,1\}^j}\Set{ 
 \sum_{i=1}^j z_i \alpha_i +  (1-z_i)(1-\alpha_i) | \sum_{i=1}^j (1-z_i) \text{ is odd}}.
$$
Linear functions can be optimized over the parity polytope in polynomial time via a linear size extended formulation~\cite{Kaibel2013}, or by writing an integer program with a single integer variable~\cite{Bader2018}, or by a simple greedy-like algorithm. An analogous approach can be taken if the sum of $z_i$ should be odd.

\section{Area comparisons}
\label{sec:area-section} The approximation presented above is an over approximation of $y = x^2$.  This is sufficient for providing dual bounds due how the approximation is applied using the diagonal perturbation.  However, our formulation can also be altered slightly to provide an under approximation of $y=x^2$, and in particular, creates a covering of the curve with a union of polytopes. We describe two relaxations that are comparable to that of Dong and Luo~\cite{Dong-Luo-2018}. We then compare these models based on the combined area of the covering to see how these methods converge.

We construct our first relaxation, named \BHH{1}, from the constraints \eqref{eqn:ideal-formulation-one-layer} and
\begin{subequations}\label{eq:sawtooth-relax}
    \begin{align}
          y &\leq x - \sum_{i=1}^\L \tfrac{g_i}{2^{2i}} \label{eq:sawtooth-relax-ub}\\
          y &\geq \left(x - \sum_{i=1}^{j} \frac{g_i}{2^{2i}} \right) - \frac{1}{2^{2j+2}} && j \in \lrbr{L-1} \label{eq:sawtooth-relax-lb}\\
          y &\ge 2x-1\label{eq:sawtooth-relax-rhlb}\\
          y &\ge 0\label{eq:sawtooth-relax-lhlb}
    \end{align}  
\end{subequations}
We can form a tighter relaxation \BHH{2} by starting with \BHH{1}, then adding the cut \cref{eq:sawtooth-relax-lb} with $j=L$:
\begin{align}\label{eq:sawtooth-relax-lb2}
      y &\geq \left(x - \sum_{i=1}^{j} \frac{g_i}{2^{2i}} \right) - \frac{1}{2^{2j+2}} & j \in \lrbr{L}.
\end{align}

\cref{tab:volume21,tab:volume-01} compare the volume of our relaxed method with the method of Dong and Luo~\cite{Dong-Luo-2018} on the intervals $x \in [0,1]$ and $x \in [-2,1]$, respectively. As $L$ increases, the volume of our relaxation consistently shrinks by a factor of 4, which is strictly greater by a fair margin to the improvement rate observed for the method of Dong and Luo. We can formalize our rate of improvement in the following proposition.

\begin{table}
{\small
\begin{tabular}{l|llllllll}
Method & $\L = 0$ \  & $\L = 1$ & $\L = 2$ & $\L = 3$ & $\L = 4$  \\
\hline 
\HL{} & 0.25 & 0.0680 \textbf{(3.68)} & 0.0177\textbf{(3.84)} & 0.00448\textbf{(3.95)} & 0.00112 \textbf{(3.994)}  \\
\BHH{1} & 0.25 & 0.0625\textbf{(4)} & 0.0156\textbf{(4)} & 0.00391\textbf{(4)} & 0.000977\textbf{(4)} \\
\BHH{2} & 0.188 & 0.0469\textbf{(4)} & 0.0117\textbf{(4)} & 0.00293\textbf{(4)}
& 0.000732\textbf{(4)}
\end{tabular}}
\begin{tabular}{ccc}
    \includegraphics[scale = 0.18]{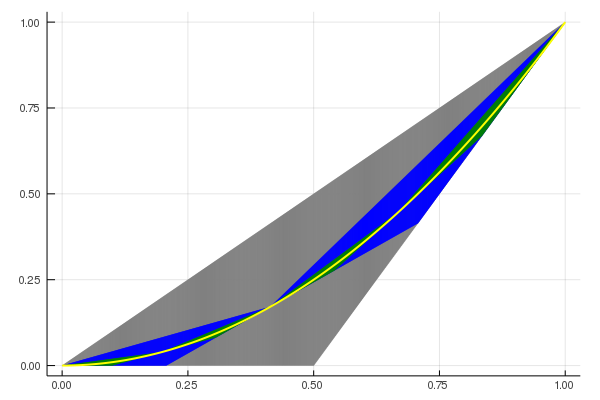} &
    \includegraphics[scale = 0.18]{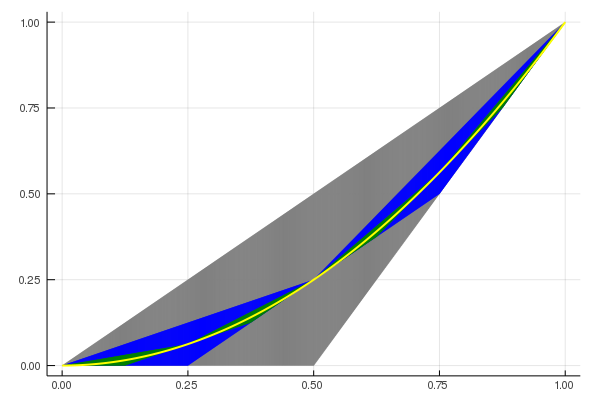} &
    \includegraphics[scale = 0.18]{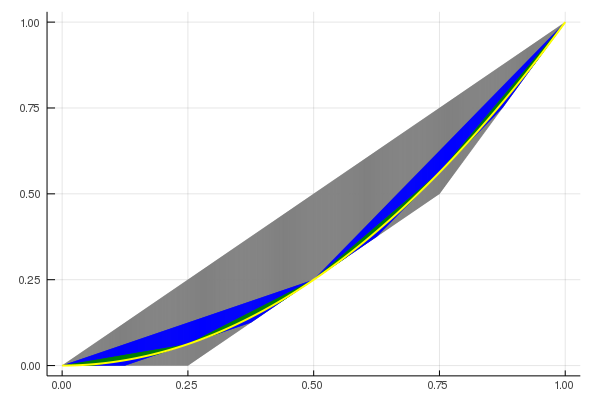}\\
    \HL{} & \BHH{1} & \BHH{2}
    \end{tabular}
\caption{Area/ratio table $x \in [0,1]$.  The Gray area is for $L=0$, the blue area is $L=1$, the green area (very small) is $L=2$, and the yellow curve is the curve $y=x^2$.   The area for each method was approximated numerically using a Riemann sum.}
\label{tab:volume-01}
\end{table}

\begin{table}
    \begin{tabular}{l|ccccccc}
    Method \ & $\L = 0$ \  & $\L = 1$ & $\L = 2$ & $\L = 3$ & $\L = 4$  \\
    \hline 
         \HL{} & 6.75 & 1.94\textbf{(3.47)} & 0.659\textbf{(2.95)} & 0.197\textbf{(3.34)} & 0.0530\textbf{(3.72)}\\
         \BHH{1} & 6.75 & 1.69\textbf{(4)} & 0.422\textbf{(4)} & 0.105\textbf{(4)} & 0.0264\textbf{(4)}\\
         \BHH{2} & 5.06 & 1.27\textbf{(4)} & 0.316\textbf{(4)} & 0.0791\textbf{(4)} & 0.0198\textbf{(4)} 
    \end{tabular}

    \begin{tabular}{ccc}
        \includegraphics[scale = 0.18]{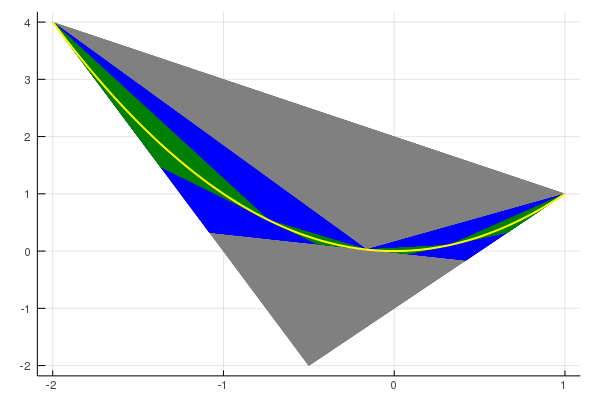} &
        \includegraphics[scale = 0.18]{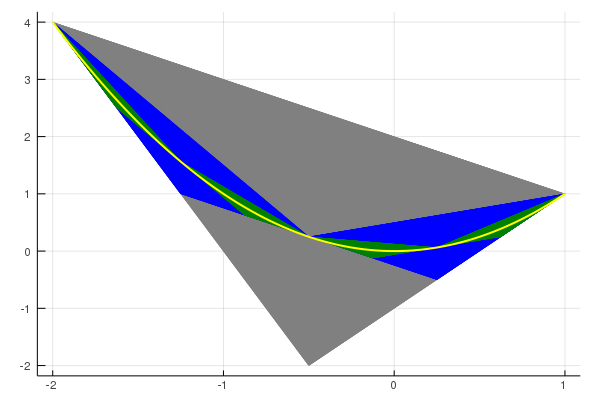} &
        \includegraphics[scale = 0.18]{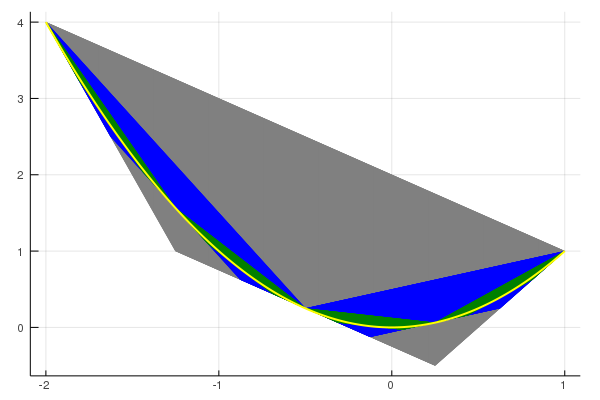}\\
        \HL{} & \BHH{1} & \BHH{2}
    \end{tabular}
    \caption{Area/ratio table $x \in [-2,1]$. Area/ratio table $x \in [0,1]$.  The Gray area is for $L=0$, the blue area is $L=1$, the green area (very small) is $L=2$, and the yellow curve is the curve $y=x^2$.   The area for each method was approximated numerically using a Riemann sum.  
    Interestingly, the \HL{} model converges at different rates depending on the domain, whereas our methods always converges at the same rate.}
    \label{tab:volume21}
\end{table}

\begin{proposition}\label{prop:area-approx}
The volume of our approximation decreases by a factor of 4 with each subsequent layer (i.e. as $\L$ increases).  Furthermore, the expected error at points $x$ sampled uniformly at random from the input interval domain is proportional to the total volume.
\end{proposition}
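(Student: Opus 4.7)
The plan is to establish a self-similarity identity for the relaxation region and derive a recursion $A_L = A_{L-1}/4$, where $A_L$ denotes the area of the relaxation at level $L$. The key algebraic step is verifying, by matching values at the $2^{L-1}+1$ breakpoints and using piecewise linearity, that for $x \in [0,1]$
\begin{align*}
    F_L(x/2) = \tfrac{1}{4}\, F_{L-1}(x) \qquad \text{and} \qquad
    F_L\!\left(\tfrac{1+x}{2}\right) = \tfrac{1}{4}\, F_{L-1}(x) + \tfrac{x}{2} + \tfrac{1}{4}.
\end{align*}

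Next, I would split the relaxation region on $[0,1]$ into its left half $x \in [0,1/2]$ and right half $x \in [1/2,1]$, and show each is an affine image of the level-$(L-1)$ relaxation on $[0,1]$. On the left, applying the substitution $(x,y) = (u/2,\, v/4)$, the upper bound $y \le F_L(x)$ becomes $v \le F_{L-1}(u)$, the lower-bound cut $y \ge F_j(x) - 4^{-j-1}$ with $j \ge 1$ maps to the index-$(j{-}1)$ cut at level $L-1$, the $j=0$ cut $y \ge x - 1/4$ becomes $v \ge 2u - 1$ (matching \eqref{eq:sawtooth-relax-rhlb}), the bound $y \ge 0$ persists as $v \ge 0$, and $y \ge 2x-1$ becomes $v \ge 4u-4$, which is redundant given $v \ge 0$. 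On the right, the substitution $(x,y) = ((1+u)/2,\, (v + 2u + 1)/4)$ yields the analogous identification, with the $j=0$ cut now absorbing the role of $y \ge 0$ and $y \ge 2x-1$ producing the $v \ge 2u-1$ constraint. Both substitutions have Jacobian $1/8$, so each half contributes $A_{L-1}/8$, giving $A_L = A_{L-1}/4$. The same recursion holds for \BHH{2}, since its extra cut $j = L$ maps to the corresponding cut $j = L-1$ at the next level.

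For the expected-error claim, the relaxation contains the curve $y = x^2$, so every vertical slice $\{y : (x,y) \in R_L\}$ contains $x^2$; hence any feasible point has $|y - x^2| \le u_L(x) - \ell_L(x)$, where $\ell_L, u_L$ denote the lower and upper envelopes. Sampling $x$ uniformly from an input interval of length $w$, the expected worst-case pointwise error is $\tfrac{1}{w}\int (u_L - \ell_L)\,dx = A_L/w$, which is proportional to the area.

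The main obstacle will be the bookkeeping needed to verify that the two substitutions bijectively map the constraints at level $L$ onto those at level $L-1$, in particular tracking how the $j=0$ cut absorbs the role of different domain-boundary constraints on the two halves without any inequality being lost or duplicated.
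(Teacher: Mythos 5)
Your proposal is correct, but it takes a genuinely different route from the paper. The paper proves the factor-of-4 decay by identifying \BHH{1} (resp.\ \BHH{2}) with the piecewise McCormick relaxation of $y=x^2$ at $2^L$ (resp.\ $2^{L+1}$) uniformly spaced pieces (\cref{lem:area-lb-oa}), computing the area of a single McCormick piece on $[x_1,x_2]$ as $\tfrac14(x_2-x_1)^3$ (\cref{lem:pw-mccormick-area}), and summing to get the closed form $\tfrac14(b-a)^3 4^{-L}$; the factor of 4 then falls out of the explicit formula, and the same machinery yields the optimality of uniform breakpoints and the $\tfrac{3}{16}$-constant for \BHH{2}. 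You instead prove a self-similarity: the identities $F_L(x/2)=\tfrac14 F_{L-1}(x)$ and $F_L(\tfrac{1+x}{2})=\tfrac14F_{L-1}(x)+\tfrac{x}{2}+\tfrac14$ (both correct, checked at breakpoints) show that the level-$L$ region is the union of two affine copies of the level-$(L-1)$ region, each with Jacobian $\tfrac18$, giving $A_L=A_{L-1}/4$ directly without ever computing $A_L$. I verified your constraint bookkeeping: on each half the level-$L$ cuts indexed $j\ge1$ map to the level-$(L-1)$ cuts indexed $j-1$, the $j=0$ tangent cut becomes $v\ge 2u-1$ on the left and $v\ge0$ on the right, and the displaced domain-boundary cuts become redundant, so nothing is lost or duplicated. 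One caveat: your argument (like the paper's own \cref{lem:area-lb-oa} and the reported areas in \cref{tab:volume-01}) requires the $j=0$ cut $y\ge x-\tfrac14$ to be present, whereas \cref{eq:sawtooth-relax-lb} as written indexes $j$ from $1$; you are matching the paper's evident intent, but the recursion would fail for the formulation taken literally. Your approach buys a shorter, more structural proof of the stated decay rate (and extends verbatim to \BHH{2}); the paper's buys the explicit constants and the breakpoint-optimality statement. The expected-error claim is handled essentially identically in both (integrate the slice width over a uniform $x$).
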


\cref{prop:area-approx} relies on the characterization of \BHH{1} as the piecewise McCormick relaxation of $y=x^2$ at uniformly-spaced breakpoints. For one piece $[x_1,x_2]$, this relaxation consists of the tangent lines, or outer-approximation cuts, at $x_1$ and $x_2$ for the lower bound, and the secant line between $x_1$ and $x_2$ for the upper bound. We have already established that the upper bound, the \NN{} approximation, is a piecewise interpolant to $x^2$ at the chosen breakpoints, yielding the secant line on each interval $[x_1,x_2]$ between interpolation points, and thus the upper-bound of the piecewise McCormick approximation. In \cref{lem:area-lb-oa} below, we show that the lower bound of the \BHH{1} and \BHH{2} approximations give the piecewise McCormick lower bounds for $2^L$ and $2^{L+1}$ uniformly-spaced breakpoints, respectively.

\begin{lemma} \label{lem:area-lb-oa}
    Define $T$ as the lower-bounding set in $(x,y)$ for the relaxation \BHH{2}, $T = \proj_{x,y}\{(x,y,\bm g, \bm \alpha) \in [0,1] \times \R \times [0,1]^L \times \{0,1\}^L : \cref{eq:sawtooth-relax-lb2,eq:sawtooth-relax-rhlb,eq:sawtooth-relax-lhlb,eqn:ideal-formulation-one-layer}\}$. Then $T$ can be constructed via $2^{L+1}+1$ uniformly-spaced outer-approximation cuts for $y \ge x^2$ on the interval $[0,1]$, based on the tangent lines to $x^2$ at $x_i = i \cdot 2^{-(L+1)}$, $i=0, \dots, 2^{(L+1)}$. That is, letting $p_{\hat{x}}(x)$ be the tangent line to $y=x^2$ at the point $\hat{x}$,
    \begin{equation}
        p_{\hat{x}}(x) = 2\hat{x}(x-\hat{x}) + \hat{x}^2 = \hat{x}(2x-\hat{x}),
    \end{equation}
    $T$ is equivalent to $\left\{(x,y) \in [0,1] \times \R : y \ge p_{\hat{x}}(x) \quad \forall i \in \{0, \dots, L\},~\hat{x} = i\cdot 2^{-(L+1)}\right\}$.
\end{lemma}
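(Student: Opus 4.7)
The plan is to project out $(\bm g, \bm \alpha)$ and recognize each resulting $(x,y)$-inequality as a tangent half-plane. The key starting observation is that, because $\bm \alpha \in \{0,1\}^L$ and \eqref{eqn:ideal-formulation-one-layer} forces each $g_i = G(g_{i-1})$ with $g_0 = x$, the full vector $\bm g$ is a uniquely-determined continuous function of $x \in [0,1]$ (the sawtooth iterates). Thus the projection $T$ is obtained by substituting these sawtooth values into the defining constraints, giving
\[
y \ge F_j(x) - 2^{-(2j+2)} \qquad j \in \{0,1,\ldots,L\},
\]
(using the empty-sum convention $F_0(x) = x$, which yields the constraint $y \ge x - 1/4$), together with the two boundary constraints $y \ge 0$ and $y \ge 2x-1$.

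The second step is a local algebraic identity. On the piece $[k/2^j,(k+1)/2^j]$, $F_j$ is the secant $\tfrac{2k+1}{2^j}x - \tfrac{k(k+1)}{2^{2j}}$ interpolating $y=x^2$ at the endpoints, and subtracting $2^{-(2j+2)}$ simplifies (since $k(k+1) + 1/4 = (k+1/2)^2 = (2k+1)^2/4$) to $\tfrac{2k+1}{2^j}x - \tfrac{(2k+1)^2}{2^{2j+2}}$, which is exactly $p_{m_k^{(j)}}(x)$ for the midpoint $m_k^{(j)} \coloneqq (2k+1)/2^{j+1}$. So the shifted $F_j$ coincides piecewise with midpoint tangents to $y=x^2$.

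Next I upgrade this to a global envelope identity on $[0,1]$: $F_j(x) - 2^{-(2j+2)} = \max_{k=0}^{2^j-1} p_{m_k^{(j)}}(x)$. Two tangents $p_{m_k^{(j)}}$ and $p_{m_\ell^{(j)}}$ cross at $x = (m_k^{(j)}+m_\ell^{(j)})/2$. In particular, adjacent ones cross at $(k+1)/2^j$, exactly the breakpoint of $F_j$; and on the piece $[k/2^j,(k+1)/2^j]$ any $\ell < k$ crosses at $\le k/2^j$ and any $\ell > k$ crosses at $\ge (k+1)/2^j$, so $p_{m_k^{(j)}}$ dominates every $p_{m_\ell^{(j)}}$ there. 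Hence the constraint for index $j$ is equivalent to the simultaneous tangent constraints $y \ge p_{m_k^{(j)}}(x)$ for all $k = 0,\ldots,2^j-1$. Finally, the boundary constraints $y \ge 0$ and $y \ge 2x-1$ are the tangents at $\hat x = 0$ and $\hat x = 1$, respectively.

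To close, I verify the multiplicity-free correspondence $\{0,1\} \cup \bigcup_{j=0}^L \{m_k^{(j)} : 0\le k \le 2^j-1\} = \{i \cdot 2^{-(L+1)} : 0 \le i \le 2^{L+1}\}$: any interior $i \in \{1,\ldots,2^{L+1}-1\}$ admits a unique factorization $i = 2^{L-j}(2k+1)$ with $j \in \{0,\ldots,L\}$ and $k \in \{0,\ldots,2^j-1\}$, corresponding to the midpoint $m_k^{(j)}$. The main obstacle is the global envelope step, requiring that non-adjacent midpoint tangents not supplant $p_{m_k^{(j)}}$ on its own piece; this reduces to the crossing-point computation together with the observation that $|m_k^{(j)} - m_\ell^{(j)}|$ grows monotonically in $|k-\ell|$, so the closer tangent always dominates. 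The rest of the argument is direct algebra and bookkeeping.
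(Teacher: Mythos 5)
Your proposal is correct and follows essentially the same route as the paper's proof: identify each shifted interpolant $F_j(x)-2^{-(2j+2)}$ as the pointwise maximum of the midpoint tangents $p_{m_k^{(j)}}$, add the boundary tangents at $0$ and $1$, and count the resulting tangent points via the dyadic factorization $i=2^{L-j}(2k+1)$ (the paper phrases this last step in terms of binary decimals). You are somewhat more explicit than the paper on the projection step and on why the single piecewise-linear cut is equivalent to the conjunction of tangent cuts, but the underlying argument is the same.
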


\begin{proof}
To begin, we note that, as shown by Yarotsky \cite{Yarotsky-2016}, we have that for each $l = 0, \dots, L$, $F_l(x)$ gives a piecewise-linear interpolant in $y=x^2$ at $2^l + 1$ uniformly-spaced points on $x \in [0,1]$. That is, for each $i \in \{0,1,\dots,2^l\}$, we have that $F_l(i \cdot 2^{-l}) = (i \cdot 2^{-l})^2$.

For $x_1<x_2$, Consider a single linear interpolant to $y=x^2$ on the interval $[x_1,x_2]$, given as
\begin{equation}
    \begin{array}{rl}
        h(x) &= \dfrac{x_2^2-x_1^2}{x_2-x_1}(x-x_1) + x_1^2\\
             &= (x_1+x_2)(x-x_1) + x_1^2.
    \end{array}
\end{equation}
Since $h(x)-x^2$ is concave, the deviation $h(x)-x^2$ is maximized when $\frac{d}{dx} (h(x)-x^2) = x_1+x_2-2x = 0$, yielding $x^* = \frac{x_1+x_2}{2}$. At this point, we have   
\begin{equation}
    \begin{array}{rll}
        h(x^*)-(x^*)^2 &= (x_1+x_2)\lrp{\dfrac{x_1+x_2}{2}-x_1} + x_1^2 - (\dfrac{x_1+x_2}{2})^2\\
                       &= \frac 12 (x_1+x_2)^2 - \frac 14 (x_1+x_2)^2 - (x_1+x_2)x_1 + x_1^2\\
                       &= \frac 14 (x_1+x_2)^2 - x_1 x_2\\
                       &= \frac 14 x_1^2 + \frac 12 x_1x_2 - x_1x_2 +  \frac 14 x_2^2\\
                       &= \frac 14 (x_2-x_1)^2.
    \end{array}
\end{equation}
Thus, we have that $(x^*)^2 = h(x^*) - \frac 14 (x_2-x_1)^2$. Since $x^2$ is a convex function, and since $h- \frac 14 (x_2-x_1)^2$ is tangent to $x^2$ at $x^*$ (as both slopes are $x_2-x_1$), this implies that for all $x \in [0,1]$, we have $(x^*)^2 \ge h(x) - \frac 14 (x_2-x_1)^2$.

Now, since $F_l(x)$ is a linear interpolant to $y=x^2$ on each interval $[i \cdot 2^{-l}, (i+1)\cdot 2^{-l}]$, with interval width $2^{-l}$, we have that the cuts $y \ge F_l(x) - \frac 14 2^{-2l} = F_l(x) - 2^{-2l-2}$ are valid for $y=x^2$, and furthermore are tangent to $x^2$ at the point $x=(i+\frac 12) \cdot 2^{-l}$, the midpoints of all interpolants. Thus, we obtain outer-approximation cuts at $x=(i+\frac 12) \cdot 2^{-l}$ for each $i=0, \dots, 2^l-1$.

We obtain $T$ by applying these cuts for $l=0, \dots, L$, combined with the outer-approximation cuts $y \ge 0$ (tangent at $x=0$) and $y \ge 2x-1$ (tangent at $x=1$). We now show that this yields $2^{L+1}+1$ uniformly-spaced outer-approximation cuts to $y=x^2$. This can be easily seen by expressing the outer-approximation points in binary. The points $x=(i+\frac 12) \cdot 2^{-l}$ for each $i=0, \dots, 2^l-1$ can be characterized as exactly the numbers in $(0,1)$ such that the $(l+1)$th binary decimal is a $1$, and all later binary decimals are zero. Alternatively, it is the set of points
$$P_l \defeq \left\{x \in [0,1] : \exists~ \mathbf{a} \in \{0,1\}^l \text{ s.t. } x = 2^{-(l+1)} + \sum_{i=1}^l 2^{-i} a_i\right\}.$$

We then have that the set of outer-approximation points, $x \in \{0,1\} \cup \bigcup_{l \in \llbracket 0, L\rrbracket} P_l$, is the set of all binary decimals in the interval $[0,1]$ for which the last $1$ occurs no later than the $L+1$st decimal place, which has cardinality $2^{L+1}+1$. This forms the set of uniformly-spaced points $i \cdot 2^{-(L+1)}$, $i = 0, \dots, 2^{L+1}$. Thus, we have that the set $T$ consists of a set of $2^{L+1}+1$ uniformly-spaced outer-approximation cuts to the function $y=x^2$, as required.
\end{proof}

With \cref{lem:area-lb-oa}, we establish that the \BHH{1} relaxation is equivalent to the piecewise McCormick relaxation of $y=x^2$ on the uniformly-spaced breakpoints $x_i = 2^{-L}i$, $i \in \lrbr{0,2^L}$. We now establish the area of any given piece of the relaxation.

\begin{lemma}\label{lem:pw-mccormick-area}
    The area of the McCormick Relaxation of $y = x^2$ on the interval $[x_1, x_2]$ is  $\tfrac{1}{4}(x_2 - x_1)^3$.
\end{lemma}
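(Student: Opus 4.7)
The plan is to compute the area directly by identifying the boundary of the McCormick envelope and integrating.

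First, identify the relaxation explicitly. On $[x_1, x_2]$, the McCormick envelope of $y = x^2$ (viewed as $y = x \cdot x$) is bounded above by the secant line $s(x) = (x_1 + x_2)x - x_1 x_2$ through $(x_1, x_1^2)$ and $(x_2, x_2^2)$, and bounded below by the upper envelope of the tangent lines $t_1(x) = 2x_1 x - x_1^2$ and $t_2(x) = 2x_2 x - x_2^2$ to $y = x^2$ at $x_1$ and $x_2$. Solving $t_1(x) = t_2(x)$ gives the crossing point $m = (x_1 + x_2)/2$, so the lower envelope is $t_1$ on $[x_1, m]$ and $t_2$ on $[m, x_2]$.

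Next, split the area into two pieces using the curve $y = x^2$ as a separator: (a) the region between $s(x)$ and $x^2$, and (b) the region between $x^2$ and $\max\{t_1(x), t_2(x)\}$.

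For piece (a), a direct expansion gives
\begin{equation*}
\int_{x_1}^{x_2} \bigl[(x_1 + x_2)x - x_1 x_2 - x^2\bigr]\, dx = \tfrac{1}{6}(x_2 - x_1)^3,
\end{equation*}
which is the classical area between a parabolic arc and its chord. For piece (b), use the substitution $u = x - x_1$ on $[x_1, m]$ and $u = x_2 - x$ on $[m, x_2]$; since $x^2 - t_1(x) = (x - x_1)^2$ and $x^2 - t_2(x) = (x - x_2)^2$, each half contributes
\begin{equation*}
\int_0^{(x_2 - x_1)/2} u^2 \, du = \tfrac{1}{24}(x_2 - x_1)^3,
\end{equation*}
for a total of $\tfrac{1}{12}(x_2 - x_1)^3$. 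Adding the two pieces gives $\bigl(\tfrac{1}{6} + \tfrac{1}{12}\bigr)(x_2 - x_1)^3 = \tfrac{1}{4}(x_2 - x_1)^3$, as claimed.

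There is no real obstacle here; the argument is a routine calculus exercise. The only small care needed is to verify that the two tangent lines truly meet at the midpoint (rather than somewhere else inside $[x_1, x_2]$), and that $t_1 \ge t_2$ on $[x_1, m]$ with the reverse on $[m, x_2]$, so that the split of the integral is valid.
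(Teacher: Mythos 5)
Your proof is correct, and it takes a genuinely different route from the paper's. The paper observes that the McCormick region is a triangle with vertices $(x_1,x_1^2)$, $\bigl(\tfrac{x_1+x_2}{2},x_1x_2\bigr)$, $(x_2,x_2^2)$ (the third being the intersection of the two tangent lines), and then computes the area as half the magnitude of a cross product of edge vectors. You instead slice the region along the parabola itself and integrate: the chord-over-parabola piece gives the classical $\tfrac{1}{6}(x_2-x_1)^3$, and the parabola-over-tangents piece gives $\tfrac{1}{12}(x_2-x_1)^3$ after splitting at the midpoint where the tangent lines cross. Both computations are routine and both land on $\tfrac14(x_2-x_1)^3$; the paper's is slightly shorter once one recognizes the region as a triangle, while yours avoids that geometric identification and, as a bonus, separately quantifies the overestimation area (above the curve) and the underestimation area (below the curve) --- a decomposition that is in the same spirit as the paper's subsequent proposition, where adding the extra cut \cref{eq:sawtooth-relax-lb2} removes a further $\tfrac{1}{16}(x_2-x_1)^3$ from the lower region. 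Your checks that the tangents meet at the midpoint and that $t_1 \ge t_2$ to the left of it are exactly the right details to verify, and they are verified correctly.
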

\begin{proof}
We wish to obtain a closed-form solution for the area of the resulting triangle region. To do so, we compute the vertices of this triangle, construct vectors between them, then compute the cross-product of these vectors.

The equations of the tangent lines are given by
\begin{equation*}
\begin{array}{ll}
    y = 2 x_1 (x - x_1) + x_1^2 = 2x_1 (x - \frac{x_1}{2}),\\
    y = 2 x_2 (x - x_2) + x_2^2 = 2x_2 (x - \frac{x_2}{2}).
\end{array}
\end{equation*}
We thus find that the intersection of these lines is given by the point $(\frac{x_1+x_2}{2},x_1x_2)$. Thus, the three vertices are given as $v_1 = (x_1,x_1^2)$, $v_2 = (\frac{x_1+x_2}{2},x_1x_2)$, and $v_3 = (x_2,x_2^2)$. From these vertices, we obtain two vectors
\begin{equation*}
    \begin{array}{cc}
         v_2-v_1 = [\frac 12 (x_2 - x_1), x_1(x_2-x_1)] = (x_2 - x_1) [\frac 12, x_1],\\
         v_3-v_2 = [\frac 12 (x_2 - x_1), x_2(x_2-x_1)] = (x_2 - x_1) [\frac 12, x_2].
    \end{array}
\end{equation*}
The area is given by the magnitude of the cross product as
\begin{equation*}
    A = \frac 12 |(v_2 - v_1) \times (v_3 - v_2)| = \frac 12 (x_2-x_1)^2 |\tfrac{1}{2} x_2 - \tfrac{1}{2} x_1| = \frac 14 (x_2 - x_1)^3.
\end{equation*}
As required.
\end{proof}

With \cref{lem:pw-mccormick-area}, we are now ready to show that the area of \BHH{1} is optimal among all piecewise McCormick relaxations with a fixed number of pieces.

\begin{proposition}
The minimum possible area covering $y=x^2$ on $x \in [a,b]$ with a sequence of $n$ McCormick Relaxations is $\tfrac 14 (b-a)^3/n^2$, and is achieved via uniformly spaced breakpoints.
\end{proposition}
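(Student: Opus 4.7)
The plan is to combine \cref{lem:pw-mccormick-area} with a standard convexity argument to reduce the problem to minimizing $\sum_{i=1}^n d_i^3$ subject to $\sum_{i=1}^n d_i = b-a$, where $d_i$ denotes the width of the $i$th piece. By \cref{lem:pw-mccormick-area}, if we partition $[a,b]$ by breakpoints $a = t_0 < t_1 < \cdots < t_n = b$ and set $d_i = t_i - t_{i-1}$, then the total area of the piecewise McCormick covering is
\begin{equation*}
    A(d_1, \dots, d_n) = \tfrac{1}{4} \sum_{i=1}^n d_i^3.
\end{equation*}
Thus the problem reduces to minimizing $\sum_{i=1}^n d_i^3$ over the simplex $\{d \in \R_{\ge 0}^n : \sum_i d_i = b-a\}$.

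Next, I would invoke convexity of the map $t \mapsto t^3$ on $\R_{\ge 0}$ to conclude via Jensen's inequality that
\begin{equation*}
    \frac{1}{n}\sum_{i=1}^n d_i^3 \ge \left(\frac{1}{n}\sum_{i=1}^n d_i\right)^3 = \left(\frac{b-a}{n}\right)^3,
\end{equation*}
with equality if and only if all $d_i$ are equal. Multiplying by $n$ gives $\sum_i d_i^3 \ge (b-a)^3/n^2$, and hence $A \ge \tfrac{1}{4}(b-a)^3/n^2$, with equality exactly when $d_i = (b-a)/n$ for every $i$, i.e., when the breakpoints are uniformly spaced.

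There is really no main obstacle here: the only substantive ingredient is \cref{lem:pw-mccormick-area}, which has already been established, and the remainder is a one-line application of Jensen's inequality (or equivalently the power-mean inequality, or a Lagrange-multiplier computation). A short verification then confirms that uniform spacing $d_i = (b-a)/n$ achieves the bound, yielding total area $n \cdot \tfrac{1}{4}\bigl((b-a)/n\bigr)^3 = \tfrac{1}{4}(b-a)^3/n^2$, as claimed.
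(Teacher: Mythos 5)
Your proposal is correct and follows essentially the same route as the paper: both reduce the problem via \cref{lem:pw-mccormick-area} to minimizing $\tfrac14\sum_i d_i^3$ subject to $\sum_i d_i = b-a$, $d_i \ge 0$, and then conclude that the uniform partition is optimal by convexity. The only (cosmetic) difference is that you invoke Jensen's inequality for the last step while the paper appeals to the KKT conditions; your version has the minor advantage of also making the equality case, and hence uniqueness of the uniform optimum, explicit.
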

\begin{proof}
Consider a general piecewise McCormick relaxation of $y=x^2$ on the interval $[a,b]$ with consecutive breakpoints $a=x_0\le x_1\le\dots\le x_n=b$. For any segment of consecutive breakpoints $x_i$ and $x_{i+1}$, the McCormick relaxation between those points is bounded by the secant line between $(x_i,x_i^2)$ and $(x_{i+1},x_{i+1}^2)$, and the tangent lines to $y=x^2$ at $x_i$ and $x_{i+1}$. For simplicity, let $i=1$ for this discussion. 

Thus, letting $y_i = x_{i} - x_{i-1},~i = 1 \dots n$, the problem of choosing the area-optimal breakpoints $a \le x_1, \le x_2 \le \dots \le x_{n-1} \le x_n$ reduces to solving
\begin{equation}
    \label{eq:nlp_area}
    \min_{y \in \R^n_+}
    \left\{\frac 14 \sum_{i=1}^n y_i^3 :  \sum_{i=1}^n y_i = b-a \right\}.
\end{equation}

It is then easy to show via the KKT conditions that, due to the convexity of $\sum_i y_i^3$ on positive support, all $y_i$ must be equal, yielding the uniformly-spaced solution $y_i = \frac{b-a}{n}$. Thus, the choice of breakpoints induced by our algorithm is optimal. \qed
\end{proof}
The \BHH{1} relaxation is exactly a union of McCormick Relaxations $2^n$ uniformly spaced breakpoints. Hence the total area is $\tfrac 14 (b-a)^3 2^{-2n}$. We now show that adding the inequality \cref{eq:sawtooth-relax-lb} with $j=L$ cuts off an extra fourth of the total area.
\begin{proposition}
The area of the relaxation in \cref{eq:sawtooth-relax} is $\tfrac{3}{16}(b-a)^3 2^{-2n}$.
\end{proposition}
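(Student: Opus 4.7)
The plan is to read the BHH2 relaxation as ``BHH1 plus one extra parallel cut'' and analyze that cut triangle-by-triangle. By the affine change of variables from \cref{ssec:formulation}, the map between BHH2 on $[a,b]$ and BHH2 on $[0,1]$ has Jacobian determinant $(b-a)^3$, so it suffices to prove the area equals $\tfrac{3}{16}\cdot 2^{-2L}$ on $[0,1]$; the factor $(b-a)^3$ then appears automatically.

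For the setup, I would recall from the discussion preceding this proposition (together with \cref{lem:area-lb-oa,lem:pw-mccormick-area}) that $\proj_{x,y}(\BHH{1})$ decomposes on $[0,1]$ into $2^L$ classical piecewise-McCormick triangles $T_i$, one per piece $[x_i,x_{i+1}]$ with $x_i = i \cdot 2^{-L}$. Each $T_i$ has the secant $\ell_i$ from $(x_i,x_i^2)$ to $(x_{i+1},x_{i+1}^2)$ as its upper edge, the tangents to $y=x^2$ at $x_i$ and $x_{i+1}$ as its side edges, and the apex $(m_i,x_ix_{i+1})$ with $m_i=(x_i+x_{i+1})/2$. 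By \cref{lem:pw-mccormick-area} each $T_i$ has area $\tfrac{1}{4}h^3$ with $h = 2^{-L}$.

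The key step is the local analysis on a single $T_i$. Since $F_L$ restricted to $[x_i,x_{i+1}]$ coincides with $\ell_i$, the extra constraint \eqref{eq:sawtooth-relax-lb} at $j = L$ reads $y \geq \ell_i(x) - \tfrac{1}{4}h^2$ on that piece, i.e.\ a line parallel to the top edge $\ell_i$ at vertical distance $\tfrac{1}{4}h^2$ below it. A direct check shows that the apex of $T_i$ lies at vertical distance $\tfrac{x_i^2+x_{i+1}^2}{2} - x_ix_{i+1} = \tfrac{1}{2}h^2$ below $\ell_i$, so the cut line sits exactly halfway between $\ell_i$ and the apex. Equating the cut with each side tangent gives intersection abscissas $x_i + h/4$ and $x_{i+1} - h/4$, so the portion of $T_i$ removed is a downward-pointing triangle, similar to $T_i$ with scale factor $\tfrac{1}{2}$, and hence of area $\tfrac{1}{4}\cdot\tfrac{1}{4}h^3 = \tfrac{1}{16}h^3$. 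The surviving part of $T_i$ therefore has area $\tfrac{3}{4}\cdot \tfrac{1}{4}h^3 = \tfrac{3}{16}h^3$.

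Summing over the $2^L$ pieces gives total area $2^L\cdot\tfrac{3}{16}h^3 = \tfrac{3}{16}\cdot 2^{-2L}$ on $[0,1]$, and rescaling to $[a,b]$ yields $\tfrac{3}{16}(b-a)^3 2^{-2n}$ as claimed. I expect the only subtlety to be at the boundary pieces $i=0$ and $i=2^L-1$, where the McCormick side tangents must coincide with the global lower bounds $y\ge 0$ and $y\ge 2x-1$; this is in fact the case since those global bounds are exactly the tangents to $y=x^2$ at $x=0$ and $x=1$, so the per-piece analysis applies uniformly and no special case is needed.
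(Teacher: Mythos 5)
Your proof is correct and takes essentially the same route as the paper: both reduce to a per-piece analysis of the $2^L$ McCormick triangles and show that the $j=L$ cut removes a sub-triangle of area $\tfrac{1}{16}h^3$, i.e.\ one quarter of each piece, leaving $\tfrac{3}{16}h^3$ per piece. The only difference is cosmetic --- the paper computes the removed triangle's vertices explicitly and takes a cross product, while you use the slightly cleaner observation that the cut is parallel to the secant at half the apex's vertical depth, so the removed triangle is similar to the piece with ratio $\tfrac{1}{2}$.
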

\begin{proof}
We will compute the area removed by the addition of this cut on a general interval $[x_1, x_2]$, where $x_1 = 2^{-L}i$ for some $i \in \lrbr{0,2^L-1}$. As shown in \cref{lem:area-lb-oa}, the cut \cref{eq:sawtooth-relax-lb} with $j=L$ intersects the curve at $x = \frac{x_1+x_2}{2}$. Now, the area removed by the addition of this cut is the area of the triangle formed by the intersections of the tangent lines at $x_1$, $x_2$, and the midpoint $x_3 \defeq \frac{x_1+x_2}{2}$. These vertices, given as intersection points $v_{ij}$ for tangent lines for $x_i$ and $x_j$, are derived following the process in \cref{lem:pw-mccormick-area} as:
\begin{equation*}
\begin{array}{ll}
    v_{12} &= \lrp{\frac{x_1+x_2}{2},x_1x_2}\\
    v_{13} &= \lrp{\frac 12 \cdot \lrp{\frac{x_1 + x_2}{2} + x_1}, \frac 12 x_1 (x_1+x_2)} 
            = \lrp{\frac 14 (3 x_1 + x_2), \frac 12 x_1 (x_1+x_2)}\\
    v_{23} &= \lrp{\frac 12 \cdot \lrp{\frac{x_1 + x_2}{2} + x_2}, \frac 12 x_2 (x_1+x_2)} 
            = \lrp{\frac 14 (x_1 + 3 x_2), \frac 12 x_2 (x_1+x_2)}
\end{array}
\end{equation*}
From these points, we obtain vectors
\begin{equation*}
    \begin{array}{ll}
         v_{12} - v_{13} &= \lrp{\frac 14 (x_2 - x_1), \frac 12 x_1(x_2-x_1)}
                          = (x_2-x_1) \lrp{\frac 14, \frac 12 x_1}\\
         v_{23} - v_{12} &= \lrp{\frac 14 (x_2 - x_1), \frac 12 x_2(x_2-x_1)} 
                          = (x_2-x_1) \lrp{\frac 14, \frac 12 x_2}.
    \end{array}
\end{equation*}
Finally, we obtain cut area
\begin{equation*}
    A_{cut} = \frac12 | (v_{12} - v_{13}) \times (v_{23} - v_{12})| = \frac12(x_2 - x_1)^2 \cdot \frac18 \lrp{x_2 - x_1} = \frac{1}{16} (x_2-x_1)^3.
\end{equation*}
The result easily follows.
\end{proof}

\section{A computational study}\label{sec:computations}

We study the efficacy of our MIP relaxation approach on a family of nonconvex quadratic optimization problems. We compare 9 methods:
\begin{enumerate}
    \item \GUROBI{}: The native method in Gurobi v9.1.1 for nonconvex quadratic problems.
    \item \GUROBISHIFT{}: The native method in Gurobi v9.1.1, applied to the diagonalized shift reformulation of \eqref{eqn:generic-problem-D}.
    \item \BARON{}: Baron v21.1.13, using CPLEX v12.10 for the MIP/LP solver.
    \item \BARONSHIFT{}: Baron v21.1.13, using CPLEX v12.10 for the MIP/LP solver, applied to the diagonalized shift reformulation of \eqref{eqn:generic-problem-D}.
    \item \CPLEX{}: The native method in CPLEX v12.10 for nonconvex quadratic objectives.
    \item \HL{}: The algorithm of Dong and Luo~\cite{Dong-Luo-2018}. The number of layers $L$ will correspond to the parameter $\nu$ appearing in their paper.
    \item \NN{}: The new formulation \eqref{eqn:graph-formulation}.
    \item \NMDT{}: The ``normalized multi-parametric disaggregation technique'' (NMDT) of Castro~\cite{Castro2015c}. See \cref{app:NMDT} for a restatement in terms of the number of levels $L$.
    \item \TNMDT{}: A tightened variant of NMDT also described in \cref{app:NMDT}.
\end{enumerate}
We can cluster these methods into two families: five ``native'' methods (\GUROBI{}, \GUROBISHIFT{}, \BARON{}, \BARONSHIFT{}, and \CPLEX{}) that pass an exact representation of the quadratic problem to the solver, and four ``relaxations'' (\HL{}, \NN{}, \NMDT{}, and \TNMDT{}) which relax the quadratic problem using a MIP reformulation, which is then passed to an underlying solver. For each of these relaxations, we use Gurobi v9.1.1 as the underlying MIP solver.
Note that \GUROBI{}, \BARON{}, and \CPLEX{} are directly given \eqref{eqn:box-qp}, which is an optimization problem with linear constraints and a nonconvex quadratic objective. In contrast, \GUROBISHIFT{} and \BARONSHIFT{} are given the diagonalized reformulation of \eqref{eqn:box-qp} per \eqref{eqn:generic-problem-D}, which is an optimization problem with a convex quadratic objective and a series of nonconvex quadratic constraints.\footnote{CPLEX does not support nonconvex quadratic constraints of this form, so we do not include a corresponding approach with the diagonal shift.}

Our objective in this computational study is to measure the quality of the dual bound provided by the different methods. To place the methods on an even footing on the primal side, as initialization we run the nonconvex quadratic optimization method in Gurobi v9.1.1 with ``feasible solution emphasis'' to produce a good starting feasible solution. We then inject this primal objective bound as a ``cut-off'' for each method.

We will consider 4 metrics, which will be applied with respect to a given family of instances:
\begin{itemize}
    \item \underline{time}: The shifted geometric mean of the solve time in seconds (shift is minimum solve time in the family).
    \item \underline{gap}: The shifted geometric mean of the final relative optimality gap $\frac{|\texttt{db}-\texttt{bpb}|}{|\texttt{bpb}|}$, where $\texttt{db}$ is the dual bound provided by the method and $\texttt{bpb}$ is the best observed primal solution for the instance across all methods. Shift is taken as $\max\{10^{-4}, \text{minimum gap observed in the family}\}$.
    \item \underline{BB}: The number of instances in which the method either produced the best dual bound, or attained Gurobi's default optimality criteria of $\texttt{gap} < 10^{-4}$.
    Note that on a given instance, more than one method can potentially attain the best bound.
    \item \underline{TO}: The number of instances in which the solver \emph{times out} and terminates due to the time limit.
\end{itemize}

We note that even if the solver terminates within the time limit (with an ``optimal'' solver status), the optimality gap for \NN{} or \HL{} as reported in \cref{tab:computations} may be nonzero, due to the fact that these two methods serve as relaxations for the original boxQP problem.

We implement each model in the JuMP algebraic modeling language~\cite{Dunning:2015a}. To compute the shift used by the four relaxations, \GUROBISHIFT{}, and \BARONSHIFT{}, we use Mosek v9.2 to solve a semidefinite programming problem to produce the ``tightest'' diagonal matrix $D = \operatorname{diag}(\delta)$ such that $Q+D$ is positive semidefinite as in Dong and Luo~\cite{Dong-Luo-2018}:

\begin{equation} \label{eqn:shift-sdp}
    \min_{\delta \in \bbR^n} {\bf e} \cdot \delta \quad \text{ s.t. } \quad Q + \operatorname{diag}(\delta) \succeq 0.
\end{equation}

In \cref{sec:eigen-shift} we study an alternative, simpler method for computing this shift and its computational implications. Note that this time to solve the SDP is not included in the solve time numbers, but is relatively small (on the order of a few seconds for the largest instances) and is computation that is shared by most of the approaches. 

Each method is provided a time limit of 10 minutes. Computational experiments are performed on a machine with a 3.8 GHz CPU with 24 cores and 128 GB of RAM. Each solver is restricted to one thread, and all experiments for a given instance are run concurrently. Our code and the corresponding problem instances are publicly available at \url{https://github.com/joehuchette/quadratic-relaxation-experiments}.

\subsection{Baseline comparison} \label{sec:baseline}

We start by comparing our nine methods on 99 box constrained quadratic objective (\emph{boxQP}) optimization problem instances as studied in Chen and Burer~\cite{Chen2012} and Dong and Luo~\cite{Dong-Luo-2018}:
\begin{align} \label{eqn:box-qp}
    \min_{0 \leq x \leq 1}\quad& x' Q x + c \cdot x.
\end{align}
Despite its simple constraint structure, this is a nonconvex optimization problem when $Q$ is not positive semidefinite, and is difficult from both a theoretical and a practical perspective.

For this baseline study, we fix each of the relaxations to use $L=3$ layers; we will revisit this selection in \cref{sec:num-layers}. We leave as future work an implementation that iteratively refines the approximation to guarantee a pre-specified approximation error, as done by Dong and Luo~\cite{Dong-Luo-2018}.

We split these instances into three families: 63 ``solved'' instances on which each method terminates at optimality within the time limit, 18 ``unsolved'' instances on which each method terminates due to the time limit, and 18 ``contested'' instances on which some methods terminate and some do not. We present the computational results in \cref{tab:computations}, stratified by family. At a high level, we observe that \NN{} attains the ``best bound'' on 87 of 99 instances. We now survey each family in more detail. Alternatively, we stratify the results based on the size of the instances in \cref{app:computations-by-size}. 
\begin{table}[htbp]
    \centering
    \begin{tabular}{ccrrrr} \toprule
        family & method & time (sec) & gap & BB & TO \\ \midrule
        \multirow{9}{*}{solved}   & \BARON{}       & 0.51 & 0.00\% & \nicefrac{63}{63} & - \\
                                  & \CPLEX{}       & 0.68 & 0.00\% & \nicefrac{63}{63} & - \\
                                  & \GUROBI{}      & 0.37 & 0.00\% & \nicefrac{63}{63} & - \\ \cmidrule{2-6}
                                  & \BARONSHIFT{}  & 0.96 & 0.00\% & \nicefrac{63}{63} & - \\
                                  & \GUROBISHIFT{} & 0.63 & 0.00\% & \nicefrac{62}{63} & - \\ \cmidrule{2-6}
                                  & \HL{}          & 1.24 & 0.08\% & \nicefrac{5}{63} & - \\
                                  & \NN{}          & 0.39 & 0.01\% & \nicefrac{26}{63} & - \\ 
                                  & \NMDT{}        & 0.67 & 0.02\% & \nicefrac{19}{63} & - \\
                                  & \TNMDT{}       & 1.07 & 0.01\% & \nicefrac{24}{63} & - \\\midrule
        \multirow{9}{*}{contested}& \BARON{}       &  66.1 & 0.00\% & \nicefrac{12}{18} & \nicefrac{6}{18} \\
                                  & \CPLEX{}       &  46.0 & 0.00\% & \nicefrac{17}{18} & \nicefrac{1}{18} \\
                                  & \GUROBI{}      &  34.4 & 0.00\% & \nicefrac{15}{18} & \nicefrac{3}{18} \\ \cmidrule{2-6}
                                  & \BARONSHIFT{}  & 154.6 & 0.02\% & \nicefrac{10}{18} & \nicefrac{8}{18} \\
                                  & \GUROBISHIFT{} & 450.1 & 1.22\% & \nicefrac{2}{18} & \nicefrac{16}{18} \\ \cmidrule{2-6}
                                  & \HL{}          & 429.0 & 1.49\% & \nicefrac{0}{18} & \nicefrac{15}{18} \\
                                  & \NN{}          & 273.0 & 0.24\% & \nicefrac{2}{18} & \nicefrac{10}{18} \\ 
                                  & \NMDT{}        & 318.0 & 0.54\% & \nicefrac{1}{18} & \nicefrac{11}{18} \\
                                  & \TNMDT{}       & 446.5 & 0.83\% & \nicefrac{1}{18} & \nicefrac{14}{18} \\\midrule
        \multirow{9}{*}{unsolved} & \BARON{}       & - & 11.48\% & \nicefrac{0}{18} & - \\
                                  & \CPLEX{}       & - & 15.67\% & \nicefrac{3}{18} & - \\
                                  & \GUROBI{}      & - & 30.73\% & \nicefrac{0}{18} & - \\ \cmidrule{2-6}
                                  & \BARONSHIFT{}  & - & 11.86\% & \nicefrac{0}{18} & - \\
                                  & \GUROBISHIFT{} & - &  5.21\% & \nicefrac{0}{18} & - \\ \cmidrule{2-6}
                                  & \HL{}          & - &  5.33\% & \nicefrac{0}{18} & - \\
                                  & \NN{}          & - &  4.31\% & \nicefrac{15}{18} & - \\ 
                                  & \NMDT{}        & - &  4.59\% & \nicefrac{0}{18} & - \\
                                  & \TNMDT{}       & - &  5.04\% & \nicefrac{0}{18} & - \\\bottomrule
    \end{tabular}
    \caption{Baseline computational results on 99 boxQP instances.}
    \label{tab:computations}
\end{table}

\paragraph{Solved instances}
On the solved instances, all methods are able to terminate quickly--all in under two seconds, on average. The native methods are able to meet the termination criteria on nearly all instances, while the relaxation methods lag behind. Our new \NN{} method performs the best, attaining the termination gap criteria on roughly half of the instances, while \CDA{} performs the worst, attaining it on only 5 of 63 instances. We stress that, for these experiments, $L$ is set relatively low. In \cref{sec:num-layers} we revisit this decision, and observe that this gap can be closed on these easy instances by increasing $L$ at a nominal computational cost.

\paragraph{Contested instances}
On the contested instances the native solvers \BARON{}, \CPLEX{}, and \GUROBI{} perform best, producing the best bound in a clear majority of the 18 instances. Interestingly, the shifted variants \BARONSHIFT{} and \GUROBISHIFT{} perform worse, with Gurobi exhibiting a substantial degradation in performance as opposed to without the diagonal shift. In contrast, the relaxations time out on a majority of the instances. Taken together, we conclude that there is a transitional family of instances wherein the native solvers still excel, but which the more complex relaxations succumb to the curse of dimensionality.

\paragraph{Unsolved instances}
This family of instances tests the scenario where a method is given a fixed time budget and is asked to produce the best possible dual bound. On these 18 instances, \NN{} is the clear winner, producing the best bounds on 15 and the lowest mean gap. The other relaxations come relatively close in terms of termination gap, but do not attain the best bound on any instance. The native \GUROBI{} performs the worst of all methods in terms of gap closed, but applying the shift as in \GUROBISHIFT{} helps tremendously, producing gaps than are much lower than the other native solver methods and close to what the relaxations are able to provide.

\subsection{Varying the solver focus}
Modern solvers such as Gurobi expose high-level parameters for configuring the search algorithm for different goals. In this subsection, we configure Gurobi to focus on the best objective bound (\texttt{MIPFocus=3}). We summarize our results in \cref{tab:computations-mipfocus}. The story on the ``solved'' and ``contested'' instances remains roughly the same as in \cref{sec:baseline}. However, on the ``unsolved'' instances all methods perform better, with the largest improvement coming from the native Gurobi methods. Nonetheless, the \NN{} method is still attaining the best bound on 10 of 18 instances, with the \GUROBISHIFT{} method coming close in terms of gap closed, and is able to produce the best bound on the remaining 8 instances.

\begin{table}[htbp]
    \centering
    \begin{tabular}{ccrrrr} \toprule
        family & method & time (sec) & gap & BB & TO \\ \midrule
        \multirow{6}{*}{solved}   & \GUROBI{}      & 0.70 & 0.00\% & \nicefrac{63}{63} & - \\
                                  & \GUROBISHIFT{} & 0.64 & 0.00\% & \nicefrac{63}{63} & - \\ \cmidrule{2-6}
                                  & \HL{}          & 1.71 & 0.08\% & \nicefrac{5}{63} & - \\
                                  & \NN{}          & 0.48 & 0.01\% & \nicefrac{18}{63} & - \\ 
                                  & \NMDT{}        & 1.45 & 0.04\% & \nicefrac{17}{63} & - \\
                                  & \TNMDT{}       & 1.13 & 0.01\% & \nicefrac{24}{63} & - \\\midrule
        \multirow{6}{*}{contested}& \GUROBI{}      &  49.3 & 0.00\% & \nicefrac{14}{18} & \nicefrac{4}{18} \\
                                  & \GUROBISHIFT{} & 458.9 & 0.26\% &  \nicefrac{3}{18} & \nicefrac{15}{18} \\ \cmidrule{2-6}
                                  & \HL{}          & 470.2 & 1.31\% &  \nicefrac{0}{18} & \nicefrac{16}{18} \\
                                  & \NN{}          & 301.9 & 0.18\% & \nicefrac{4}{18} & \nicefrac{10}{18} \\ 
                                  & \NMDT{}        & 457.8 & 0.83\% &  \nicefrac{0}{18} & \nicefrac{15}{18} \\
                                  & \TNMDT{}       & 436.6 & 0.34\% &  \nicefrac{1}{18} & \nicefrac{14}{18} \\\midrule
        \multirow{6}{*}{unsolved} & \GUROBI{}      & - & 6.13\% & \nicefrac{0}{18} & - \\
                                  & \GUROBISHIFT{} & - & 3.58\% & \nicefrac{8}{18} & - \\ \cmidrule{2-6}
                                  & \HL{}          & - & 4.71\% & \nicefrac{0}{18} & - \\
                                  & \NN{}          & - & 3.34\% & \nicefrac{10}{18} & - \\ 
                                  & \NMDT{}        & - & 4.50\% & \nicefrac{0}{18} & - \\
                                  & \TNMDT{}       & - & 4.05\% & \nicefrac{0}{18} & - \\ \bottomrule
    \end{tabular}
    \caption{Computational results with Gurobi configured with \texttt{MIPFocus=3}.}
    \label{tab:computations-mipfocus}
\end{table}

\subsection{Varying the relaxation resolution} \label{sec:num-layers}
In the previous experiments, we fixed the number of layers for each relaxation at $L=3$. In this subsection, we study how varying this parameter affects each relaxation, in terms of both solve time and gap closed. In particular, we consider setting $L \in \{2,4,6,8\}$ for each relaxation method, and experiment with the same set of 99 boxQP instances as before. We summarize the results in \cref{tab:computations-num-layers}.

On the ``solved'' instances we observe that, unsurprisingly, increasing $L$ allows us to reach the best bound criteria on far more instances. Moreover, we observe that this results in only a nominal increase in computational cost; all methods terminate with a mean solve time of seconds, even with the finest discretization. We observe that \NN{} performs the best, in terms of mean solve time and ``best bound'' for each value of $L$ considered. Morever, \NN{} can attain the termination criteria on each instance with $L=8$, which is not the case for any other method. These results indicate that, on easy instances, increasing the resolution is cheap, and can attain the same dual bound quality as the native solvers in roughly the same time. In contrast, on the ``unsolved'' instances we observe that increasing $L$ results in \emph{higher} gaps across the board. This is unsurprising--increasing $L$ results in larger formulations, and on instances where the solver is already struggling this will quickly lead to performance degradation due to the ``combinatorial explosion'' effect. Moreover, even small values for $L$ offer a nontrivial refinement in a branch-and-bound setting over a tight convex relaxation. This result suggests that, for instances known to be hard, a reasonable strategy would be to set $L$ to a small value by default and then target finer discretizations on individual quadratic terms as-needed, through a dynamic refinement approach or otherwise.

\begin{table}[htbp]
    \centering
    \begin{tabular}{cccrrrr} \toprule
        family & method & $L$ & time (sec) & gap & BB & TO \\ \midrule
        \multirow{16}{*}{solved} & \multirow{4}{*}{\HL{}}     & 2 & 0.39 & 0.79\% &  \nicefrac{0}{63} & - \\
                                  &                           & 4 & 1.64 & 0.01\% & \nicefrac{24}{63} & - \\
                                  &                           & 6 & 2.84 & 0.00\% & \nicefrac{52}{63} & - \\
                                  &                           & 8 & 3.74 & 0.00\% & \nicefrac{61}{63} & - \\ \cmidrule{2-7}
                                  & \multirow{4}{*}{\NN{}}    & 2 & 0.30 & 0.04\% &  \nicefrac{8}{63} & - \\
                                  &                           & 4 & 0.41 & 0.00\% & \nicefrac{41}{63} & - \\
                                  &                           & 6 & 0.48 & 0.00\% & \nicefrac{58}{63} & - \\
                                  &                           & 8 & 0.55 & 0.00\% & \nicefrac{63}{63} & - \\ \cmidrule{2-7}
                                  & \multirow{4}{*}{\NMDT{}}  & 2 & 0.41 & 0.12\% &  \nicefrac{8}{63} & - \\
                                  &                           & 4 & 0.84 & 0.01\% & \nicefrac{31}{63} & - \\
                                  &                           & 6 & 1.17 & 0.00\% & \nicefrac{41}{63} & - \\
                                  &                           & 8 & 1.41 & 0.00\% & \nicefrac{46}{63} & - \\ \cmidrule{2-7}
                                  & \multirow{4}{*}{\TNMDT{}} & 2 & 0.58 & 0.05\% &  \nicefrac{8}{63} & - \\
                                  &                           & 4 & 1.24 & 0.00\% & \nicefrac{37}{63} & - \\
                                  &                           & 6 & 1.43 & 0.00\% & \nicefrac{53}{63} & - \\
                                  &                           & 8 & 1.59 & 0.00\% & \nicefrac{62}{63} & - \\ \midrule
      \multirow{16}{*}{contested} & \multirow{4}{*}{\HL{}}    & 2 & 136.18 & 1.10\% &  \nicefrac{2}{13} & \nicefrac{0}{13} \\
                                  &                           & 4 & 552.44 & 0.68\% &  \nicefrac{0}{13} & \nicefrac{10}{13} \\
                                  &                           & 6 & 595.58 & 1.33\% &  \nicefrac{0}{13} & \nicefrac{12}{13} \\
                                  &                           & 8 & 600.00 & 2.06\% &  \nicefrac{0}{13} & \nicefrac{13}{13} \\ \cmidrule{2-7}
                                  & \multirow{4}{*}{\NN{}}    & 2 & 206.03 & 0.16\% & \nicefrac{1}{13} & \nicefrac{3}{13} \\
                                  &                           & 4 & 268.56 & 0.04\% & \nicefrac{4}{13} & \nicefrac{3}{13} \\
                                  &                           & 6 & 293.47 & 0.07\% & \nicefrac{4}{13} & \nicefrac{5}{13} \\
                                  &                           & 8 & 319.46 & 0.08\% & \nicefrac{8}{13} & \nicefrac{5}{13} \\ \cmidrule{2-7}
                                  & \multirow{4}{*}{\NMDT{}}  & 2 & 208.66 & 0.42\% &  \nicefrac{0}{13} & \nicefrac{3}{13} \\
                                  &                           & 4 & 376.76 & 0.16\% &  \nicefrac{2}{13} & \nicefrac{5}{13} \\
                                  &                           & 6 & 447.43 & 0.18\% &  \nicefrac{4}{13} & \nicefrac{7}{13} \\
                                  &                           & 8 & 473.95 & 0.18\% &  \nicefrac{4}{13} & \nicefrac{7}{13} \\ \cmidrule{2-7}
                                  & \multirow{4}{*}{\TNMDT{}} & 2 & 344.76 & 0.33\% &  \nicefrac{0}{13} & \nicefrac{6}{13} \\
                                  &                           & 4 & 511.43 & 0.36\% &  \nicefrac{1}{13} & \nicefrac{9}{13} \\
                                  &                           & 6 & 505.40 & 0.24\% &  \nicefrac{3}{13} & \nicefrac{8}{13} \\
                                  &                           & 8 & 534.02 & 0.26\% &  \nicefrac{4}{13} & \nicefrac{8}{13} \\ \midrule
       \multirow{16}{*}{unsolved} & \multirow{4}{*}{\HL{}}    & 2 & - & 3.98\% & \nicefrac{0}{23} & - \\
                                  &                           & 4 & - & 4.72\% & \nicefrac{0}{23} & - \\
                                  &                           & 6 & - & 5.02\% & \nicefrac{0}{23} & - \\
                                  &                           & 8 & - & 5.29\% & \nicefrac{0}{23} & - \\ \cmidrule{2-7}
                                  & \multirow{4}{*}{\NN{}}    & 2 & - & 3.37\% & \nicefrac{23}{23} & - \\
                                  &                           & 4 & - & 3.53\% & \nicefrac{0}{23} & - \\
                                  &                           & 6 & - & 3.63\% & \nicefrac{0}{23} & - \\
                                  &                           & 8 & - & 3.72\% & \nicefrac{0}{23} & - \\ \cmidrule{2-7}
                                  & \multirow{4}{*}{\NMDT{}}  & 2 & - & 3.55\% & \nicefrac{0}{23} & - \\
                                  &                           & 4 & - & 3.92\% & \nicefrac{0}{23} & - \\
                                  &                           & 6 & - & 4.05\% & \nicefrac{0}{23} & - \\
                                  &                           & 8 & - & 4.16\% & \nicefrac{0}{23} & - \\ \cmidrule{2-7}
                                  & \multirow{4}{*}{\TNMDT{}} & 2 & - & 3.84\% & \nicefrac{0}{23} & - \\
                                  &                           & 4 & - & 4.36\% & \nicefrac{0}{23} & - \\
                                  &                           & 6 & - & 4.31\% & \nicefrac{0}{23} & - \\
                                  &                           & 8 & - & 4.30\% & \nicefrac{0}{23} & - \\ \bottomrule
    \end{tabular}
    \caption{Computational results with varying discretization levels.}
    \label{tab:computations-num-layers}
\end{table}

\subsection{Varying the diagonal perturbation} \label{sec:eigen-shift}

We now turn our attention to how the diagonal shift that is used by the relaxations, \BARONSHIFT{}, and \GUROBISHIFT{} is computed. As discussed above, we may solve the SDP \eqref{eqn:shift-sdp} to compute a valid shift that is ``tightest'' under some reasonable objective measure. While this approach is reasonable for the boxQP instances studied here, it may not be practical for larger-scale instances due to the scalability of the SDP solver. Therefore, we compare this shift against a simpler ``eigenvalue'' shift $D = -\lambda_{\operatorname{min}} I$, where $I \in \bbR^{n \times n}$ is the identity matrix and $\lambda_{\operatorname{min}}$ is the smallest eigenvalue of $Q$. This minimum eigenvalue can be readily computed, and the resulting shift is conceptually similar to the convexification process used in $\alpha$BB~\cite{Androulakis1995}, for example.

We perform a similar experiment as in \cref{sec:baseline}, focusing on comparing our two shifts head-to-head for each method that utilizes it. We summarize our results in \cref{tab:computations-shift}. We observe that the tighter shift provided by the SDP \eqref{eqn:shift-sdp} offers a substantial improvement over the eigenvalue shift across the board. On the ``solved'' instances, we observe an order of magnitude reduction in solve time for all methods, as well as a significant increase in best bound attainment for the relaxation methods. On the ``contested'' instances, we observe a similar degradation when using the shift. The difference is perhaps most striking for \GUROBI{}: on the 22 instances, produces the best bound on 16 of 22 with the SDP shift, but with the eigenvalue shift only attains it on only one, and times out on remaining 21. Finally, for the ``unsolved'' instances we observe that the SDP shift provides 2-3x smaller gaps than the eigenvalue shift for each method.

\begin{table}[htbp]
    \centering
    \begin{tabular}{cccrrrr} \toprule
        family & method & shift & time & gap & BB & TO \\ \midrule
        \multirow{11.5}{*}{solved}  & \multirow{2}{*}{\GUROBI{}{}} & eigen & 1.81 & 0.00\% & \nicefrac{51}{51} & - \\
                                    &                              & sdp   & 0.19 & 0.00\% & \nicefrac{51}{51} & - \\ \cmidrule{2-7}
                                    & \multirow{2}{*}{\HL{}}       & eigen & 3.57 & 0.14\% & \nicefrac{17}{51} & - \\
                                    &                              & sdp   & 0.47 & 0.07\% & \nicefrac{34}{51} & - \\ \cmidrule{2-7}
                                    & \multirow{2}{*}{\NN{}}       & eigen & 1.22 & 0.01\% & \nicefrac{14}{51} & - \\
                                    &                              & sdp   & 0.14 & 0.00\% & \nicefrac{16}{51} & - \\ \cmidrule{2-7} 
                                    & \multirow{2}{*}{\NMDT{}}     & eigen & 1.73 & 0.04\% & \nicefrac{12}{51} & - \\
                                    &                              & sdp   & 0.26 & 0.01\% & \nicefrac{19}{51} & - \\ \cmidrule{2-7}
                                    & \multirow{2}{*}{\TNMDT{}}    & eigen & 3.85 & 0.02\% & \nicefrac{19}{51} & - \\
                                    &                              & sdp   & 0.40 & 0.00\% & \nicefrac{24}{51} & - \\ \midrule
        \multirow{11.5}{*}{contested} & \multirow{2}{*}{\GUROBI{}{}} & eigen & 582.97 & 3.52\% &  \nicefrac{1}{22} & \nicefrac{21}{22} \\
                                    &                              & sdp   & 130.76 & 0.05\% & \nicefrac{16}{22} &  \nicefrac{6}{22} \\ \cmidrule{2-7}
                                    & \multirow{2}{*}{\HL{}}       & eigen & 600.00 & 5.02\% & \nicefrac{0}{22} & \nicefrac{22}{22} \\
                                    &                              & sdp   & 126.65 & 0.23\% & \nicefrac{0}{22} & \nicefrac{5}{22} \\ \cmidrule{2-7}
                                    & \multirow{2}{*}{\NN{}}       & eigen & 562.82 & 2.03\% &  \nicefrac{0}{22} & \nicefrac{19}{22} \\
                                    &                              & sdp   &  43.24 & 0.02\% & \nicefrac{6}{22} &  \nicefrac{0}{22} \\ \cmidrule{2-7}
                                    & \multirow{2}{*}{\NMDT{}}     & eigen & 577.73 & 2.84\% &  \nicefrac{0}{22} & \nicefrac{20}{22} \\
                                    &                              & sdp   &  62.64 & 0.11\% & \nicefrac{1}{22} &  \nicefrac{2}{22} \\ \cmidrule{2-7}
                                    & \multirow{2}{*}{\TNMDT{}}    & eigen & 594.63 & 3.72\% & \nicefrac{0}{22} & \nicefrac{22}{22} \\
                                    &                              & sdp   & 116.27 & 0.05\% & \nicefrac{1}{22} & \nicefrac{4}{22} \\ \midrule
        \multirow{11.5}{*}{unsolved}  & \multirow{2}{*}{\GUROBI{}{}} & eigen & - & 8.64\% & \nicefrac{0}{26} & - \\
                                    &                              & sdp   & - & 4.17\% & \nicefrac{0}{26} & - \\ \cmidrule{2-7}
                                    & \multirow{2}{*}{\HL{}}       & eigen & - & 9.22\% & \nicefrac{0}{26} & - \\
                                    &                              & sdp   & - & 4.23\% & \nicefrac{0}{26} & - \\ \cmidrule{2-7}
                                    & \multirow{2}{*}{\NN{}}       & eigen & - & 8.31\% & \nicefrac{0}{26} & - \\
                                    &                              & sdp   & - & 3.12\% & \nicefrac{26}{26} & - \\ \cmidrule{2-7}
                                    & \multirow{2}{*}{\NMDT{}}     & eigen & - & 8.47\% & \nicefrac{0}{26} & - \\
                                    &                              & sdp   & - & 3.42\% & \nicefrac{0}{26} & - \\ \cmidrule{2-7}
                                    & \multirow{2}{*}{\TNMDT{}}    & eigen & - & 9.01\% & \nicefrac{0}{26} & - \\
                                    &                              & sdp   & - & 3.95\% & \nicefrac{0}{26} & - \\
 \bottomrule
    \end{tabular}
    \caption{Computational results with two different diagonal shifts.}
    \label{tab:computations-shift}
\end{table}

\subsection{A (simple) problem with quadratic constraints}\label{ssec:qcp-example}
In this section, we present a unique class of instances on which our model displays suprisingly strong performance compared to Gurobi.  In this model, we minimize a 1-norm with respect to box constraints and an additional quadratic constraint stating that the 2-norm is greater than some bound.   The specific model considered is
\begin{equation} \label{eq:qcp-example}
    \begin{array}{rll}
        \min & \tfrac{100}{n}\sum_{i=1}^n |x_i - \varepsilon_i|\\
        \text{s.t.}  & x_i \in [-1,1] & i \in \lrbr{n}\\
             & \sum_{i=1}^n x_i^2 \ge n - 0.5
    \end{array}
\end{equation}
where $\varepsilon_i = \text{rand}(-1,1) \cdot 10^{-3}$, sorted in ascending order of $|\varepsilon_i|$. We note that this problem can be solved in closed form.

We compare against \GUROBI{}{}, \GUROBISHIFT{}, and \TNMDT{} for various values of $n$, with $L=10$. The results are shown in \cref{tab:qcp-example} below.

\begin{table}[htbp]
    \centering
    \begin{tabular}{ccrrrrr} \toprule
        $n$ & method & time (sec) & gap & nodes \\ \midrule
        \multirow{4.5}{*}{10}    &  \GUROBI{}{} & 0.30 & 0.00\% & 2047\\
                                 &  \GUROBISHIFT{} & 0.08 & 0.00\% & 2047\\\cmidrule{2-6}
                                 &  \NN{} & 0.10 & 0.00\% & 89\\
                                 &  \TNMDT{} & 16.89 & 0.00\% & 45306\\\midrule
        \multirow{4.5}{*}{15}    &  \GUROBI{}{} & 1.66 & 0.00\% & 66828\\
                                 &  \GUROBISHIFT{} & 1.22 & 0.00\% & 66828\\\cmidrule{2-6}
                                 &  \NN{} & 0.72 & 0.00\% & 3477\\
                                 &  \TNMDT{} & 318.95 & 0.00\% & 1494473\\\midrule
        \multirow{4.5}{*}{18}    &  \GUROBI{}{} & 8.88 & 0.00\% & 528270\\
                                 &  \GUROBISHIFT{} & 8.08 & 0.00\% & 528210\\\cmidrule{2-6}
                                 &  \NN{} & 1.20 & 0.00\% & 7757\\
                                 &  \TNMDT{} & (TO) & 0.73\% & 3451026\\\midrule
        \multirow{4.5}{*}{20}    &  \GUROBI{}{} & 37.15 & 0.00\% & 2099824\\
                                 &  \GUROBISHIFT{} & 35.49 & 0.00\% & 2099563\\\cmidrule{2-6}
                                 &  \NN{} & 1.17 & 0.00\% & 9746\\
                                 &  \TNMDT{} & (TO) & 1.04\% & 3976996\\\midrule
        \multirow{4.5}{*}{22}    &  \GUROBI{}{} & 202.63 & 0.00\% & 8389900\\
                                 &  \GUROBISHIFT{} & 309.72 & 0.00\% & 8389887\\\cmidrule{2-6}
                                 &  \NN{} & 1.66 & 0.00\% & 11226\\
                                 &  \TNMDT{} & (TO) & 0.91\% & 2937766\\
 \bottomrule
    \end{tabular}
    \caption{Computational results for a stylized quadratically constrained problem with $L=10$.}
    \label{tab:qcp-example}
\end{table}

The results indicate a strong performance advantage of \NN{} above the competing methods shown. Note that the number of nodes for \GUROBI{}{} and \GUROBISHIFT{} are consistently close to $2^{n+1}$, with computational times to match, while \TNMDT{} is even worse. On the other hand, while $\NN{}$ shows only moderate increases in computational time, with a max of about $1.66s$, with a far smaller node count to match.

Upon deeper investigation, we found that the primary computational advantage of the $\NN$ method stems from Gurobi's Gomory cuts. In fact, turning off presolve, heuristics, and all cuts except for Gomory cuts, the performance significantly improves over the baseline performance. For $n=22$ the problem solves in 0.09s with only 191 nodes and 39 Gomory cuts. For $n=250$, over an order of magnitude higher, the problem solves in 10.65s with only 13016 nodes and 378 Gomory cuts.

We expect that Gurobi is performing a spatial branching algorithm. However, the problem was constructed so that the feasible solutions are near corners of a hypercube, while at spatial branching relaxations, optimal solutions to the relaxations are close to the center. Moreover, this property is likely to hold in a spatial branch-and-bound algorithm, meaning that you will likely need to branch on all variables in order to identify the correct corner. This behavior would yield at least $O(2^n)$ spatial branching nodes, and give poor bounds throughout the branching process, as observed in the computational results. On the other hand, for the \NN{} formulation provides an alternative branching structure that, when combined with Gomory cuts, provide excellent computational performance for this collection of instances.

\subsection{More difficult problems with nonconvex quadratic constraints}\label{ssec:qcqp-difficult}
To conclude our computational section, we study a ``best nearest'' variant of the boxQP problem that is in the spirit of the problem from \cref{ssec:qcp-example}. In more detail, for some fixed $\hat{\gamma} \in \bbR$ and $\hat{x} \in [0,1]^n$, we solve the problem
\begin{align*}
    \min_{x}\quad& \|x - \hat{x}\|_1 \\
    \text{s.t.}\quad& x'Qx + c \cdot x \leq 0.95\hat{\gamma} \\
    & 0 \leq x \leq 1.
\end{align*}
Since each boxQP instance considered has negative objective cost, this will constrain the feasible region to those points which are ``close'' to optimal for the original boxQP instance. We construct 54 instances based on the the \texttt{basic} family of boxQP instances from Chen and Burer~\cite{Chen2012}. We set $\hat{x}$ as the vector of all $0.5$s, and set $\hat{\gamma}$ to be the best primal cost on the underlying boxQP instance that is found by Gurobi after 10 minutes. We summarize the results in \cref{tab:qcqp}.

\begin{table}[htbp]
    \centering
    \begin{tabular}{ccrrrr} \toprule
        family & method & time (sec) & gap & BB & TO \\ \midrule
        \multirow{9}{*}{solved}   & \BARON{}       & 12.64 & 0.00\% & \nicefrac{8}{8} & - \\
                                  & \GUROBI{}      &  5.27 & 0.00\% & \nicefrac{8}{8} & - \\ \cmidrule{2-6}
                                  & \BARONSHIFT{}  & 13.06 & 0.00\% & \nicefrac{8}{8} & - \\
                                  & \GUROBISHIFT{} & 32.31 & 0.00\% & \nicefrac{8}{8} & - \\ \cmidrule{2-6}
                                  & \HL{}          &  6.13 & 1.70\% & \nicefrac{0}{8} & - \\ 
                                  & \NN{}          &  4.50 & 0.06\% & \nicefrac{6}{8} & - \\
                                  & \NMDT{}        & 11.99 & 1.03\% & \nicefrac{0}{8} & - \\
                                  & \TNMDT{}       & 22.66 & 0.07\% & \nicefrac{6}{8} & - \\\midrule
        \multirow{9}{*}{contested}& \BARON{}       & 176.89 &  0.00\% & \nicefrac{30}{40} & \nicefrac{12}{40} \\
                                  & \GUROBI{}      &  66.53 &  0.04\% & \nicefrac{28}{40} & \nicefrac{12}{40} \\ \cmidrule{2-6}
                                  & \BARONSHIFT{}  & 173.21 &  0.01\% & \nicefrac{29}{40} & \nicefrac{12}{40} \\
                                  & \GUROBISHIFT{} & 566.15 & 10.07\% &  \nicefrac{1}{40} & \nicefrac{39}{40} \\ \cmidrule{2-6}
                                  & \HL{}          & 412.17 &  7.12\% &  \nicefrac{0}{40} & \nicefrac{34}{40} \\
                                  & \NN{}          & 383.73 &  2.92\% &  \nicefrac{5}{40} & \nicefrac{33}{40} \\ 
                                  & \NMDT{}        & 481.24 &  6.10\% &  \nicefrac{0}{40} & \nicefrac{35}{40} \\
                                  & \TNMDT{}       & 522.14 &  4.16\% &  \nicefrac{4}{40} & \nicefrac{36}{40} \\\midrule
        \multirow{9}{*}{unsolved} & \BARON{}       & - & 69.60\% & \nicefrac{0}{6} & - \\
                                  & \GUROBI{}      & - & 65.66\% & \nicefrac{0}{6} & - \\ \cmidrule{2-6}
                                  & \BARONSHIFT{}  & - & 48.23\% & \nicefrac{0}{6} & - \\
                                  & \GUROBISHIFT{} & - & 24.42\% & \nicefrac{0}{6} & - \\ \cmidrule{2-6}
                                  & \HL{}          & - & 12.29\% & \nicefrac{0}{6} & - \\
                                  & \NN{}          & - &  8.98\% & \nicefrac{6}{6} & - \\ 
                                  & \NMDT{}        & - & 10.10\% & \nicefrac{0}{6} & - \\
                                  & \TNMDT{}       & - & 10.31\% & \nicefrac{0}{6} & - \\\bottomrule
    \end{tabular}
    \caption{Baseline computational results on 54 ``best nearest'' boxQP instances.}
    \label{tab:qcqp}
\end{table}

On the ``solved'' instances, we observe that our \NN{} relaxation has the lowest mean solve time of all methods, and is able to prove optimality on 6 of 8 methods. We note that the optimality gaps for \CDA{} and \NMDT{} are nearly two orders of magnitude greater than what was observed on the baseline boxQP instances in \cref{tab:computations}. This is in keeping with the common knowledge in the global optimization (e.g. Dey and Gupte~\cite{Dey:2015}) that tight relaxations for quadratic functions in the constraints do not necessarily lead to tight relaxations in the objective.

Similar to the baseline boxQP instances, we observe that the ``contested'' instances are a transient class where the native methods are able to terminate within the time limit with greater frequency than the relaxations, leading to significantly smaller mean optimality gaps. On the hardest ``unsolved'' instances, we again see that our \NN{} method produces the smallest optimality gap across all methods on each of the 6 instances, outperforming all other methods.

\section{Conclusion}

We present a simple MIP model for relaxing quadratic optimization problems that competes with robust commercial solvers in terms of solve time and bound quality. There are a number of ways that our method could be further improved. For example, we could follow the strategy of Dong and Luo~\cite{Dong-Luo-2018} and implement an adaptive strategy that dynamically refines individual quadratic terms as-needed. Additionally, for boxQP instances we can potentially improve performance by leveraging the results of Hansen et al.~\cite{hansen}, or applying existing cutting plane procedures~\cite{Bonami2018}. Further, we could apply bound tightening on variables~\cite{Galli2018} or include a tail-end call to a nonlinear solver to produce an optimal primal feasible solution.

We also have performed preliminary analysis on a variant of this method to model higher-order monomials as opposed to quadratics. Fundamental results of Wei~\cite{Wei1999} show that our sawtooth functions form a basis for \emph{any} continuous functions. Unfortunately, we have observed that a comparable approximation for $x^3$ seem to require a relatively large number of basis functions. We summarize our preliminary results in \cref{app:sawtooth-basis}. We believe it would be interesting future work to observe if this seeming obstruction is fundamental, or if compact methods for higher-order monomials can be derived through our approach.

\bibliographystyle{spmpsci}  
\bibliography{mybibliography}

\appendix

\section{Normalized multi-parametric disaggregation technique}
\label{app:NMDT}

We present a standard approach to descretizing continuous variables for handling bilinear products in nonlinear models.  This approach is perhaps the most straightforward way to convert bilinear problems to MILPs and has been referred to as \emph{Normalized Multi-Parametric Disaggregation Technique} (NMDT)~\cite{Castro2015c}.  
We adapt the bilinear approach here to a squaring a single variable.

Consider $x \in [0,1]$, and let $L$ be a positive integer. We then use the representation
\begin{subequations}
    \begin{align}
        x &= \sum_{i=1}^p 2^{-i} \beta_i + \Delta x \label{eq:NMDT_rep}\\
        \beta_i &\in \{0,1\} && i \in \intsto{L} \\
        \Delta x &\in [0, 2^{-L}],
    \end{align}
\end{subequations}
where $L$ is the number of binary variables to use.

Multiplying \eqref{eq:NMDT_rep} by $x$, and  substituting the representation into the $x \Delta x$ term, we obtain 
\begin{subequations}
    \begin{align*}
        y &= x \cdot x &&= \sum_{i=1}^L 2^{-i} x\beta_i + x\Delta x
        & &&= \sum_{i=1}^L 2^{-i} x\beta_i + \lrp{\sum_{i=1}^L 2^{-i} \beta_i + \Delta x} \Delta x \\
        & &&= \sum_{i=1}^L 2^{-i} (x + \Delta x) \beta_i + \Delta x^2
    \end{align*}
\end{subequations}
Now, using the fact that $x + \Delta x \in [0, 1+2^{-L}]$, first lift the model by adding variables $u_i$ and $\Delta u$ such that $u_i = (x + \Delta x) \beta_i$ and $\Delta u = \Delta x^2$, and then we relax these equations using McCormick Envelopes.

Given bounds $x \in [\underbar{x}^{\min}, \underbar{x}^{\max}]$ and
$\beta \in [0, 1]$, The McCormick envelope $\mathcal{M}(x,\beta)$ is defined as the following relaxation of $u = x \beta$
\begin{equation}
\mathcal{M}(x, \beta) = \left\{ (x, \beta, y) \in [\underbar{x}^{\min},\underbar{x}^{\max}]\times [0,1] \times \R \ : 
    \cref{eq:McCormick-bin}
    \right\}.
    \label{eq:McCormick}
\end{equation}
\begin{equation}
    \label{eq:McCormick-bin}
    \begin{aligned}
         \underbar{x}^{\min} \cdot \beta \le  & \, u \le \underbar{x}^{\max}\cdot \beta \\
  x - \underbar{x}^{\max}\cdot (1-\beta)  \le &\, u \le  x - \underbar{x}^{\min}\cdot (1-\beta) 
    \end{aligned}
\end{equation}
To approximate $u = x^2$ with $x \in [0, \underbar{x}^{\max}]$, this becomes
\begin{equation}
\mathcal{M}(x) = \left\{ (x, u) \in [0,\underbar{x}^{\max}] \times \R \ : \  
    \begin{aligned}
       & \, u \ge 0 \\
       \underbar{x}^{\max} (2 x - \underbar{x}^{\max}) \le &\, u \le \underbar{x}^{\max}\cdot x
    \end{aligned}
    \right\}.
    \label{eq:McCormick-sq}
\end{equation}

We present two ways to use this approach.  The first is the most direct use of $\NMDT{}$, as used in \cite{Castro2015c}.  This model is
\begin{subequations}
    \begin{align}
        x &= \sum_{i=1}^L 2^{-i} \beta_i + \Delta x\\
        y &= \sum_{i=1}^L 2^{-i} u_i + \Delta u\\
        (x, \beta_i, u_i) &\in \mathcal{M}(x, \beta_i) && i \in \intsto{L}\\
        (\Delta x, x, \Delta u) &\in \mathcal{M}(\Delta x, x)\\
        \beta_i &\in \{0,1\} && i \in \intsto{L} \\
        \Delta x &\in [0, 2^{-L}]
    \end{align}
\end{subequations}
 Here, the only error introduced in the relaxation is from $\Delta u = x\Delta x$, yielding a maximum error of $2^{-L-2}$, again occurring when $\Delta x = 2^{-L-1}$.

Alternatively, we consider the expansion of the $x \Delta x$ term.
We thus obtain the \TNMDT{} relaxation for $y=x^2$.
\begin{subequations}
    \begin{align}
        x &= \sum_{i=1}^L 2^{-i} \beta_i + \Delta x\\
        y &= \sum_{i=1}^L 2^{-i} u_i + \Delta u\\
        (x+\Delta x, \beta_i, u_i) &\in \mathcal{M}(x+\Delta x, \beta_i) && i \in \intsto{L}\\
        (\Delta x, \Delta u) &\in \mathcal{M}(\Delta x)\\
        \beta_i &\in \{0,1\} && i \in \intsto{L} \\
        \Delta x &\in [0, 2^{-L}]
    \end{align}
\end{subequations}
Since $\beta_i$ is binary, $u_i = \beta_i (x + \Delta x)$ is represented exactly. Thus, the only possible error is introduced in the relaxation of $\Delta y = \Delta x^2$, which yields a maximum error of $2^{-2L-2}$, occurring when $\Delta x = 2^{-L-1}$.

 Now, the expected error of \TNMDT{} is the expected error from the relaxation of $\Delta y = \Delta x^2$. Modeling $\Delta x$ as a uniform random variable within its bounds $[0,2^{-L}]$, and noting that the only overestimator from \cref{eq:McCormick-sq} is $y \le 2^{-L} \Delta x$ we obtain expected overapproximation error
\begin{equation}
    \begin{array}{rl}
        \mathbb{E}(2^{-L} \Delta x - \Delta x^2) &= \int_{0}^{2^{-L}} 2^L (2^{-L} \Delta x - \Delta x^2) \mathrm d \Delta x\\
        &= 2^L \int_{0}^{2^{-L}} (2^{-L} \Delta x - \Delta x^2) \mathrm d \Delta x\\
        &= 2^L (\frac16 (2^{-L})^3)\\
        &= \frac 16 2^{-2L}.
    \end{array}
\end{equation}
Similarly, the expected underapproximation error can be computed as $\frac 1{12} 2^{-2L}$.

\section{Additional baseline computation summaries} \label{app:computations-by-size}
In \cref{tab:computations-by-size} we summarize the results of our baseline experiments stratified by the number of decision variables as in, e.g., Table 4 of Dey et al.~\cite{dey_kazachkov_lodi_mu}.

\begin{table}[htbp]
    \centering
    \begin{tabular}{ccrrrr} \toprule
        family & method & time (sec) & gap & BB & TO \\ \midrule
        \multirow{9}{*}{$n \in [20,30]$}  & \BARON{}       & 0.19 & 0.00\% & \nicefrac{18}{18} & \nicefrac{0}{33} \\
                                          & \CPLEX{}       & 0.20 & 0.00\% & \nicefrac{18}{18} & \nicefrac{0}{18} \\
                                          & \GUROBI{}      & 0.14 & 0.00\% & \nicefrac{18}{18} & \nicefrac{0}{18} \\ \cmidrule{2-6}
                                          & \BARONSHIFT{}  & 0.34 & 0.00\% & \nicefrac{18}{18} & \nicefrac{0}{18} \\
                                          & \GUROBISHIFT{} & 0.05 & 0.00\% & \nicefrac{18}{18} & \nicefrac{0}{18} \\ \cmidrule{2-6}
                                          & \HL{}          & 0.16 & 0.06\% & \nicefrac{2}{18} & \nicefrac{0}{18} \\
                                          & \NN{}          & 0.05 & 0.00\% & \nicefrac{9}{18} & \nicefrac{0}{18} \\ 
                                          & \NMDT{}        & 0.09 & 0.01\% & \nicefrac{7}{18} & \nicefrac{0}{18} \\
                                          & \TNMDT{}       & 0.11 & 0.00\% & \nicefrac{8}{18} & \nicefrac{0}{18} \\\midrule
        \multirow{9}{*}{$n \in [40,50]$}  & \BARON{}       & 0.46 & 0.00\% & \nicefrac{33}{33} & \nicefrac{0}{33} \\
                                          & \CPLEX{}       & 0.70 & 0.00\% & \nicefrac{33}{33} & \nicefrac{0}{33} \\
                                          & \GUROBI{}      & 0.37 & 0.00\% & \nicefrac{33}{33} & \nicefrac{0}{33} \\ \cmidrule{2-6}
                                          & \BARONSHIFT{}  & 0.87 & 0.00\% & \nicefrac{33}{33} & \nicefrac{0}{33} \\
                                          & \GUROBISHIFT{} & 0.54 & 0.00\% & \nicefrac{33}{33} & \nicefrac{0}{33} \\ \cmidrule{2-6}
                                          & \HL{}          & 1.00 & 0.07\% & \nicefrac{3}{33} & \nicefrac{0}{33} \\
                                          & \NN{}          & 0.36 & 0.00\% & \nicefrac{16}{33} & \nicefrac{0}{33} \\ 
                                          & \NMDT{}        & 0.61 & 0.03\% & \nicefrac{11}{33} & \nicefrac{0}{33} \\
                                          & \TNMDT{}       & 0.98 & 0.01\% & \nicefrac{15}{33} & \nicefrac{0}{33} \\\midrule
        \multirow{9}{*}{$n \in [60,80]$}  & \BARON{}       &  13.66 & 0.00\% & \nicefrac{15}{21} & \nicefrac{6}{21} \\
                                          & \CPLEX{}       &  15.10 & 0.00\% & \nicefrac{19}{21} & \nicefrac{3}{21} \\
                                          & \GUROBI{}      &   9.99 & 0.00\% & \nicefrac{16}{21} & \nicefrac{5}{21} \\ \cmidrule{2-6}
                                          & \BARONSHIFT{}  &  25.29 & 0.00\% & \nicefrac{15}{21} & \nicefrac{6}{21} \\
                                          & \GUROBISHIFT{} & 112.85 & 0.00\% & \nicefrac{12}{21} & \nicefrac{8}{21} \\ \cmidrule{2-6}
                                          & \HL{}          & 112.31 & 0.30\% & \nicefrac{0}{21} & \nicefrac{7}{21} \\
                                          & \NN{}          &  37.26 & 0.04\% & \nicefrac{4}{21} & \nicefrac{3}{21} \\ 
                                          & \NMDT{}        &  53.73 & 0.12\% & \nicefrac{2}{21} & \nicefrac{4}{21} \\
                                          & \TNMDT{}       & 110.42 & 0.07\% & \nicefrac{2}{21} & \nicefrac{6}{21} \\\midrule
        \multirow{9}{*}{$n \in [90,125]$} & \BARON{}       & 261.48 &   0.24\% &  \nicefrac{9}{27} & \nicefrac{18}{27} \\
                                          & \CPLEX{}       & 218.22 &   0.13\% & \nicefrac{13}{27} & \nicefrac{16}{27} \\
                                          & \GUROBI{}      & 170.56 &   0.20\% & \nicefrac{11}{27} & \nicefrac{16}{27} \\ \cmidrule{2-6}
                                          & \BARONSHIFT{}  & 375.45 &   0.55\% &  \nicefrac{7}{27} & \nicefrac{20}{27} \\
                                          & \GUROBISHIFT{} & 578.95 &   3.43\% &  \nicefrac{1}{27} & \nicefrac{26}{27} \\ \cmidrule{2-6}
                                          & \HL{}          & 569.53 &   3.84\% &  \nicefrac{0}{27} & \nicefrac{26}{27} \\
                                          & \NN{}          & 533.97 &   2.35\% & \nicefrac{14}{27} & \nicefrac{25}{27} \\ 
                                          & \NMDT{}        & 543.72 &   2.84\% &  \nicefrac{0}{27} & \nicefrac{25}{27} \\
                                          & \TNMDT{}       & 563.94 &   3.39\% &  \nicefrac{0}{27} & \nicefrac{26}{27} \\\bottomrule
    \end{tabular}
    \caption{Computational results with instances stratified based on number of variables.}
    \label{tab:computations-by-size}
\end{table}

\section{General representations with sawtooth bases}
\label{app:sawtooth-basis}
The premise our formulation is that the function $y=x^2$ can be arbitrarily closely approximated by a series of sawtooth functions.  We discuss here if such approximations could conveniently apply to other polynomials.

In \cite{Wei1999}, the authors present a Fourier series-like method that leverages orthogonal triangular functions to derive a convergent class of $L_2$-optimal approximations for general functions on the interval $[-\pi,\pi]$. Define the periodic triangular functions

\begin{equation}
    \begin{array}{rl}
         X(x) = \begin{cases} \tfrac{\pi^2 + 2\pi x}{8} & -\pi < x + 2\pi k \le 0, k \in \Z\\
                              \tfrac{\pi^2 - 2\pi x}{8} & 0 < x + 2\pi k \le \pi, k \in \Z
                \end{cases}\\
         Y(x) = \begin{cases} \tfrac{\pi x}{4} & -\tfrac{\pi}{2} < x + 2\pi k \le \tfrac{\pi}{2}, k \in \Z\\
                              \tfrac{\pi^2 - \pi x}{4} & \tfrac{\pi}{2} < x + 2\pi k \le \tfrac{3\pi}{2}, k \in \Z
                \end{cases}
    \end{array}
\end{equation}

The authors then build their orthogonal basis functions using an orthogonal linear transformation of the basis
\begin{equation*}
    1, X(x), Y(x), X(2x), Y(2x), \dots, X(nx), Y(nx).    
\end{equation*}
However, as with Fourier series approximations, this method has the limitation that all approximating functions are equal at the endpoints of the interval, resulting in a poor approximation for functions at which the endpoints are not equal. Thus, to obtain good approximations for $x^3$ on $[-\pi,\pi]$, we first add the linear function $-\pi^2 x$ to enforce equality at the endpoints. 

Then, applying this method to $x^2$ and $x^3 - \pi^2 x$ on the interval $x \in [-\pi,\pi]$, we obtain the following numbers for the ($L_1$-error). Note that almost all of the $Y(nx)$ functions are relevant for approximating $x^3 - \pi^2 x$ (and no $X(nx)$'s), while only a few $X(nx)$ functions (and no $Y(nx)'s$) are relevant for approximating $x^2$.

\begin{table}[htpb]
\begin{center}
{\footnotesize
\begin{tabular}{c|lllllll}
Function \ & $N=2$ & $N = 4$ & $N = 8$ & $N = 16$ & $N = 32$ \\
\hline
     $x^2$ & 0.994 \ & 0.249 ~ \textbf{(4)} \ & \ 0.0622 ~ \textbf{(4)} \ & \ 0.0155 ~ \textbf{(4)} \ & \ 0.0039 ~ \textbf{(3.97)}\\
     $x^3 - \pi^2 x$  & 7.23 \ & 2.07 ~ \textbf{(3.5)} \ & \ 0.626 ~ \textbf{(3.3)} \ & \ 0.304 ~ \textbf{(2.06)} \ & \ 0.108 ~ \textbf{(2.81)} 
\end{tabular}}
\end{center}
\caption{Comparison of $L_1$-error. Factor of improvement over the previous value for $L$ is shown in bold.}
\label{tab:volume-triangle-approx}
\end{table}       

To investigate the outlook of sparsely approximating $x^3$ with triangular functions directly, we solved the following MIP to obtain the $L_1$-optimal triangular approximation to $x^3$ on the interval $[0,1]$ using re-scaled versions of the basis functions above, and explicitly including a linear shift. We discretely approximate the $L-1$ error via the error at uniformly-spaced points $x_1, \dots, x_{N_p} \in [0,1]$, allowing the inclusion of only $N_f$ triangular functions.
\begin{equation}
    \begin{array}{rrll}
         \min && \tfrac{1}{Np}\dsum_{j = 1}^{N_p}t_j \\
            s.t.  & t_j &\ge \sum_{i = I}(\lambda_i f_i(x_j) + \lambda_0 x_j + f_c) - x_j^3 & \forall j\\
              & t_j &\ge -(\sum_{i \in I}(\lambda_i f_i(x_j) + \lambda_0 x_j + f_c) - x_j^3) & \forall j\\
              & -M \cdot \alpha_i &\le \lambda_i \le M \cdot \alpha_i & \forall i \ge 1\\
              & \dsum_{i=1}^N \alpha_i &\le N_f\\
              & \lambda_i &\in [-M,M] & \forall i\\
              & \alpha_i &\in \{0,1\} & \forall i
    \end{array}
\end{equation}
The result, shown in \cref{fig:xcub-sparse}, suggests that it is not possible to use this triangular basis to obtain a similar-quality sparse approximation for $x^3$ as for $x^2$: the best achievable error rate for $x^3$ is roughly $O(N_f^{-2})$, compared to $O(2^{-2N_f})$ for the quadratic.  See also Table~\ref{tab:volume-triangle-approx} where we compare the convergence of the two approximations.  
\begin{figure}
    \centering
    \includegraphics[scale=.6]{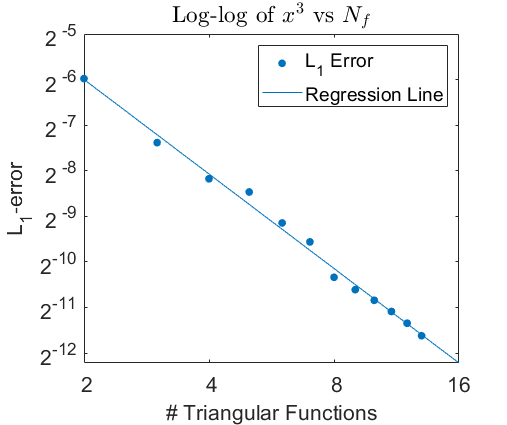}
    \caption{The $L_1$-error of $x^3$ vs. the number of approximating triangular functions $N_f$. The equation of the regression line suggests an asymtotic error rate of roughly $O(N_f^{-2})$, compared to $O(2^{-2N_f})$ for the quadratic.}
    \label{fig:xcub-sparse}
\end{figure}

\end{document}